\newcommand{\norm}[2]{\left\lVert #1\right\rVert_{#2}}
\newcommand{\weaklyto}{\rightharpoonup}
\newcommand{\weaklystar}{\overset{\ast}{\rightharpoonup}}
\newcommand{\cl}{\mathrm{cl}}
\DeclareMathOperator*{\esssup}{ess\,sup}
\newcommand{\cts}{\hookrightarrow}
\newcommand{\ctsCompact}{\xhookrightarrow{c}}
\newtheorem{theorem}{Theorem}[section]
\newtheorem{prop}[theorem]{Proposition}
\newtheorem{lem}[theorem]{Lemma}
\newtheorem{cor}[theorem]{Corollary}
\newtheorem{remark}[theorem]{Remark}
\newtheorem{remarks}[theorem]{Remarks}
\newtheorem{ass}[theorem]{Assumption}
\theoremstyle{definition}
\begin{document}
\title{Existence, iteration procedures and directional differentiability for parabolic QVIs}
\author{Amal Alphonse\thanks{Weierstrass Institute, Mohrenstrasse 39, 10117 Berlin, Germany ({\tt alphonse@wias-berlin.de},  {\tt hintermueller@wias-berlin.de}, {\tt rautenberg@wias-berlin.de}).} \and Michael Hinterm\"{u}ller\footnotemark[1]\hspace{0.07cm} \thanks{Department of Mathematics, Humboldt-University of Berlin, Unter den Linden 6, 10099 Berlin, Germany ({\tt hint@math.hu-berlin.de}, {\tt carlos.rautenberg@math.hu-berlin.de}).} 
        \and Carlos N. Rautenberg\footnotemark[1]\hspace{0.07cm} \footnotemark[2]    }
\maketitle

\begin{abstract}We study parabolic quasi-variational inequalities (QVIs) of obstacle type. Under appropriate assumptions on the obstacle mapping, we prove the existence of solutions of such QVIs by two methods: one by time discretisation through elliptic QVIs and the second by iteration through parabolic variational inequalities (VIs). Using these results, we show the directional differentiability (in a certain sense) of the solution map which takes the source term of a parabolic QVI into the set of solutions, and we relate this result to the contingent derivative of the aforementioned map. We finish with an example where the obstacle mapping is given by the inverse of a parabolic differential operator.
\end{abstract}


\section{Introduction}\label{sec:intro}
Quasi-variational inequalities (QVIs) are versatile models that are used to describe many different phenomena from fields as varied as physics, biology, finance and economics.  QVIs were first formulated by Bensoussan and Lions \cite{BensoussanLionsArticle, Lions1973} in the context of stochastic impulse control problems and since then have appeared in many models where nonsmoothness and nonconvexity are present, including superconductivity \cite{KunzeRodrigues,BarrettPrigozhinSuperconductivity,Prigozhin,MR3335194,MR1765540}, the formation and growth of sandpiles \cite{PrigozhinSandpile,BarrettPrigozhinSandpile,Prigozhin1996,Prigozhin1994,MR3335194} and networks of lakes and rivers \cite{Prigozhin1996,Prigozhin1994,MR3231973}, generalised Nash equilibrium problems \cite{HARKER199181, Facchinei2007, Pang2005}, and more recently in thermoforming \cite{AHR}. 

In general, QVIs are more complicated than variational inequalities (VIs) because solutions are sought in a constraint set which depends on the solution itself. This is an additional source of nonlinearity and nonsmoothness and creates considerable issues in the analysis and development of solution algorithms to QVI and the associated optimal control problems.

We focus in this work on parabolic QVIs with constraint sets of obstacle type and we address the issues of existence of solutions and directional differentiability for the solution map taking the source term of the QVI into the set of solutions. This extends to the parabolic setting our previous work \cite{AHR} where we provided a differentiability result for solution mappings associated to elliptic QVIs. Literature for evolutionary QVIs is scarce in the infinite-dimensional setting: among the few works available, we refer to \cite{Hintermueller2012} for a study of parabolic QVIs with gradient constraints, \cite{HRSQuasi} for existence and numerical results in the hyperbolic setting,  \cite{ASNSP} for QVIs arising in hydraulics, \cite{MR1677798} for state-dependent sweeping processes, and evolutionary superconductivity models in \cite{MR1765540}, as well as the work \cite{MR0415069}.  Differentiability analysis for parabolic (non-quasi) VIs was studied in \cite{Jarusek} and \cite{Christof}.

Let us now enter into the specifics of our setting. Let $V \subset H \subset V^*$ be a Gelfand triple of separable Hilbert spaces with $V \ctsCompact H$ a compact embedding. Furthermore, we assume that there exists an ordering to elements of $H$ via a closed convex cone $H_+$ satisfying $H_+ = \{h \in H : (h,g) \geq 0 \quad \forall g \in H_+\}$; the ordering then is $\psi_1 \leq \psi_2$ if and only if $\psi_2-\psi_1 \in H_+$. An example to have in mind is $H=L^2(\Omega)$ with $H_+$ the set of almost everywhere non-negative functions in $H$. This also induces an ordering  for $V$ and $V^*$ as well as for the associated Bochner spaces $L^2(0,T;H)$, $L^2(0,T;V)$ and so on. We define $V_+ := \{ v \in V : v \geq 0\}$. We write $v^+$ for the orthogonal projection of $v \in H$ onto the space $H_+$ and we have the decomposition $v = v^+ - v^-$. Suppose that $v \in V$ implies that $v^+ \in V$ and that there exists a constant $C >0$ such that for all $v \in V$,
\[\norm{v^+}{V} \leq C\norm{v}{V}.\]
An example of such a space $V$ is $V=W^{1,p}(\Omega)$ for $1 \leq p \leq \infty$;  we refer to \cite{Adams} for a definition of the Sobolev space $W^{1,p}(\Omega)$ over a domain $\Omega$.

Let $A\colon V \to V^*$ be a linear, symmetric, bounded and coercive operator which is T-monotone, which means that 
\[\langle Av^+, v^- \rangle_{V^*,V} \leq 0 \text{ for all $v \in V$,}\]
and let $\Phi\colon L^2(0,T;H) \to L^2(0,T;V)$ be a mapping which is increasing, i.e., 
\[\text{if $\psi_1 \leq \psi_2$, then $\Phi(\psi_1) \leq \Phi(\psi_2)$}.\]
Further assumptions will be introduced later as and when required. We consider parabolic QVIs of the following form.

\medskip

\noindent\textbf{QVI Problem}: Given $f \in L^2(0,T;H)$ and $z_0 \in V$, find $z  \in L^2(0,T;V)$ with $z' \in L^2(0,T;V^*)$ such that for all $v \in L^2(0,T;V)$ with $v(t) \leq \Phi(z)(t)$,
\begin{equation}\label{eq:qvi}
\begin{aligned}
z(t) \leq \Phi(z)(t) : \int_0^T \langle z'(t) + Az(t) - f(t), z(t) - v(t) \rangle_{V^*,V} &\leq 0,\\
z(0) &= z_0.
\end{aligned}
\end{equation}
We write the solution mapping taking the source term into the (weak or strong) solution as $\mathbf{P}_{z_0}$ so that \eqref{eq:qvi} reads $z \in \mathbf{P}_{z_0}(f)$ (sometimes we omit the subscript). In this paper, we contribute three main results associated to this QVI:
\begin{itemize}
\item \textit{Existence of solutions to \eqref{eq:qvi} via time-discretisation} (Theorem \ref{thm:discretisationExistence}): we show that solutions to \eqref{eq:qvi} can be formulated as the limit of a sequence constructed from considering time-discretised elliptic QVI problems. This result makes use of the theory of sub- and supersolutions and the Tartar--Birkhoff fixed point method.
\item \textit{Approximation of solution to \eqref{eq:qvi}  by solutions of parabolic VIs} (Theorem \ref{thm:approximationOfQVIByVIIterates}): we define an iterative sequence of solutions of parabolic VIs and show that the sequence converges in a monotone fashion to a solution of the parabolic QVI; this is another QVI existence result. The existence for the aforementioned parabolic VIs comes from either Theorem \ref{thm:discretisationExistence} or from a certain result of Brezis (which will be given in the relevant section), giving rise to two different sets of assumptions under which the theorem holds.
\item \textit{Directional differentiability for the source-to-solution mapping $\mathbf{P}$} (Theorem \ref{thm:directionalDifferentiability}): we prove that the map $\mathbf{P}$ is directionally differentiable in a certain sense using Theorem \ref{thm:approximationOfQVIByVIIterates} and some technical lemmas related to the expansion formulae for parabolic VI solution mappings.
\end{itemize}
Thus we give two existence results and a differential sensitivity result. It should be noted that the differentiability result essentially gives a characterisation of the contingent derivative (a concept frequently used in set-valued analysis) of $\mathbf{P}$ (between appropriate spaces) in terms of a parabolic QVI; see Proposition \ref{prop:contingentDerivative} for details.
\subsection{Notations and layout of paper}
We shall usually write the duality pairing between $V$ and $V^*$ as $\langle \cdot, \cdot \rangle$ rather than $\langle \cdot, \cdot, \rangle_{V^*,V}$ for ease of reading. We use the notations $\cts$ and $\ctsCompact$ to represent continuous and compact embeddings respectively. Let us define the Sobolev--Bochner spaces
\begin{align*}
W(0,T) &:= L^2(0,T;V) \cap H^1(0,T;V^*),\\
W_s(0,T) &:= L^2(0,T;V) \cap H^1(0,T;H),
\end{align*}
and defining the linear parabolic operator ${L}\colon W(0,T) \to L^2(0,T;V^*)$ by ${L}v := v'+ Av$, we also define the following space
\[W_{\mathrm r}(0,T) := \{ w \in W(0,T) : {L}w \in L^2(0,T;H)\}.\]
Note the relationships $W_s(0,T) \cts W(0,T)$ and $W_{\mathrm r}(0,T) \subset W(0,T)$.

If $z \in W(0,T)$ satisfies \eqref{eq:qvi}, we say that it is a \textit{weak solution} or simply a \textit{solution}. A weaker notion of solution is given by transferring the time regularity of the solution onto the test function and requiring only $z \in L^2(0,T;V) \cap L^\infty(0,T;H)$ to satisfy 
\begin{equation}\label{eq:qviWeak}
\begin{aligned}
z(t) \leq \Phi(z)(t) : \int_0^T \langle v'(t) + Az(t) - f(t), z(t) - v(t) \rangle &\leq 0 \quad \forall v \in W(0,T) : v(t) \leq \Phi(z)(t)\\
v(0) &= z_0
\end{aligned}
\end{equation}
and we call $z$ a \textit{very weak solution}. Note that the initial data also has been transferred onto the test function (indeed, the weak solution is not sufficiently regular to have a prescribed initial data).

The paper is structured as follows. In \S \ref{sec:discretisation} we consider the existence of solutions to \eqref{eq:qvi} via the method of time discretisation: we characterise a solution of the parabolic QVI as the limit of solutions of elliptic QVIs. In \S \ref{sec:approximations}, we approximate solutions of the QVI by a sequence of solutions of parabolic VIs that are defined iteratively. In \S \ref{sec:expansion} we consider parabolic VIs and directional differentiability with respect to perturbations in the obstacle and we make use of this in \S \ref{sec:DDMain} to prove that the source-to-solution map $\mathbf{P}$ is directionally differentiable in a particular sense. We highlight some possible alternative approaches in \S \ref{sec:otherApproaches} and finish with an example in \S \ref{sec:example}.


\section{Existence for parabolic QVIs through time discretisation}\label{sec:discretisation}
We prove existence to \eqref{eq:qvi} by the method of time discretisation via elliptic QVIs of obstacle type. This is in contrast to \cite{HRSQuasi} where the discretisation for evolutionary QVI was done in such a way as to yield a sequence of elliptic VIs, and as far as we are aware, our approach is novel in these type of problems.

We make the following basic assumption.
\begin{ass}\label{ass:OnPhiForTimeDisc}
Let $\Phi(0) \geq 0$ and
\begin{equation}
\Phi\colon H \to C^0([0,T];V)\label{ass:PhiVInC0V}.
\end{equation} 
\end{ass}
Let $N \in \mathbb{N}$, $h^N:=T/N$ and for $n=0, 1, ..., N$, $t_n^N := nh^N$. This divides the interval $[0,T]$ into $N$ subintervals of length $h^N$; we will usually write $h$ for $h^N$. We approximate the source term by 
\[f_n^N := \frac 1h \int_{t_{n-1}^N}^{t_n^N} f(t)\;\mathrm{d}t\]
and we consider the following elliptic QVI problem.

\medskip

\noindent \textbf{Discretised problem}: Given $z_0^N:=z_0$, find $z_n^N \in V$ such that
\begin{equation}\label{eq:approximatingQVI}
\begin{aligned}
z_n^N \leq \Phi(z_n^N)(t_{n-1}^N)  : \left\langle \frac{z_n^N - z_{n-1}^N}{h} + Az_n^N - f_n^N, z_n^N - v\right\rangle \leq 0 \quad \forall v \in V: v \leq \Phi(z_n^N)(t_{n-1}^N).
\end{aligned}
\end{equation}
This problem is sensible since by \eqref{ass:PhiVInC0V}, for fixed $t$, the mapping $v \mapsto \Phi(v)(t)$ is well defined from $H$ into $V$, since we may consider $H \subset L^2(0,T;H)$ (elements of $H$ can be thought of constant-in-time elements of $L^2(0,T;H)$), and $\Phi(v)$ can be evaluated pointwise in time. We consider first the existence of solutions to \eqref{eq:approximatingQVI}.
\subsection{Existence and uniform estimates for the elliptic approximations}
The inequality \eqref{eq:approximatingQVI} can be rewritten as
\begin{equation}\label{eq:approximatingQVI2}
\begin{aligned}
\left\langle z_n^N + hAz_n^N - hf_n^N- z_{n-1}^N , z_n^N - v\right\rangle \leq 0 \quad \forall v \leq \Phi(z_n^N)(t_{n-1}^N).
\end{aligned}
\end{equation}
We write the solution of \eqref{eq:approximatingQVI} or \eqref{eq:approximatingQVI2} as $\mathbf{Q}_{t_{n-1}^N}(hf_n^N + z_{n-1}^N) \ni z_n^N$. 
Related to \eqref{eq:approximatingQVI2} is the following VI:
\[ \text{Find } v \in V, v \leq \Phi(\psi)(t) : \langle v + hAv - g, v-\varphi \rangle \leq 0 \qquad \forall v \in V : v \leq \Phi(\psi)(t);\]
denote by $E_{t,h}(g,\psi)=v$ the solution of this problem. For fixed $t$ and $h$ and sufficiently smooth data, this mapping is well defined since the obstacle $\Phi(\psi)(t) \in V$. 
To ease notation, we write $E_{t_n^N}(g,\psi)$ instead of $E_{t_n^N, h^N}(g,\psi)$ (where $h^N$ is the step size) because specifying $h^N$ is redundant. 

Let us make an observation which follows from the theory of Birkhoff--Tartar \cite{tartar1974inequations, Birkhoff}. Suppose there exists a subsolution $z_{\mathrm{sub}}$ and a supersolution $z^{sup}$ to \eqref{eq:approximatingQVI}, i.e., $z_{\mathrm{sub}} \leq E_{t_{n-1}^N}(hf_n^N+z_{n-1}^N, z_{\mathrm{sub}})$ and $z^{sup} \geq E_{t_{n-1}^N}(hf_n^N+z_{n-1}^N, z^{sup})$. Then the QVI problem \eqref{eq:approximatingQVI} has a solution $z_n^N = E_{t_{n-1}^N}(hf_n^N+z_{n-1}^N, z_n^N) \in [z_{\mathrm{sub}},z^{sup}]$. The next lemma applies this idea.
\begin{remark}
In fact, the theory of Birkhoff and Tartar tells us not only that there exist in general multiple solutions in $[z_{\mathrm{sub}},z^{sup}]$ but also that there exist minimal and maximal solutions (a \emph{minimal solution} $z_{m}$ is a solution that satisfies $z_m \leq z$ for every solution $z$; maximal solutions are defined with the opposite inequality). 
\end{remark}
%
First, let us set $A_h w := w + hAw$ (this $A_h$ is the elliptic operator appearing in \eqref{eq:approximatingQVI2}) and 
define $\bar z_{n,N}$ as the solution of the following elliptic PDE:
\[A_h \bar z_{n,N} = hf_n^N + z_{n-1}^N.\]
\begin{lem}\label{lem:unIncreasing}Suppose that
\begin{align}
&\text{$f \geq 0$ is increasing, $z_0 \geq 0$}\tag{D1}\label{ass:dataNonnegativeAndfInc},\\
&\text{$z_0 \leq \Phi(z_0)(0)$ and $\langle Az_0 - f(t), v\rangle \leq 0$ for all $v \in V_+$ and a.e. $t$}\tag{D2}\label{ass:initialDataNewAss},\\
&t \mapsto \Phi(v)(t) \text{ is increasing for all $v \in V_+$}\label{eq:PhiIncreasingInTimeForNonNegV}.
\end{align}
Then the approximate problem \eqref{eq:approximatingQVI} has a non-negative solution $z_n^N \in [z_{n-1}^N,\bar z_{n,N}]$. Thus the sequence $\{z_n^N\}_{n \in \mathbb{N}}$ is increasing.
\end{lem}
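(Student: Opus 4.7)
The plan is to argue by induction on $n$, at each step exhibiting a subsolution and a supersolution to \eqref{eq:approximatingQVI} and then invoking the Birkhoff--Tartar observation recorded just above the lemma. The natural candidates are the subsolution $z_{n-1}^N$ itself and the supersolution $\bar z_{n,N}$. I would carry along the inductive hypotheses (i) $0 \leq z_0 \leq z_1^N \leq \dots \leq z_{n-1}^N$ and (ii) $z_{n-1}^N \leq \Phi(z_{n-1}^N)(t_{n-1}^N)$, with the base case $n=1$ handled by \eqref{ass:dataNonnegativeAndfInc}--\eqref{ass:initialDataNewAss}.

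For the supersolution, set $w := E_{t_{n-1}^N}(hf_n^N + z_{n-1}^N, \bar z_{n,N})$ and test its VI with the admissible choice $v = w - (w - \bar z_{n,N})^+$. Subtracting the equation $A_h \bar z_{n,N} = hf_n^N + z_{n-1}^N$ tested against $(w - \bar z_{n,N})^+$ yields $\langle A_h(w - \bar z_{n,N}), (w - \bar z_{n,N})^+\rangle \leq 0$, and the coercivity of $A_h = I + hA$ combined with the T-monotonicity of $A$ forces $(w - \bar z_{n,N})^+ = 0$, i.e., $w \leq \bar z_{n,N}$.

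The subsolution check turns on the intermediate estimate $A_h z_{n-1}^N \leq hf_n^N + z_{n-1}^N$ in $V^*$. For $n=1$ this reduces to $Az_0 \leq f_1^N$ and comes from \eqref{ass:initialDataNewAss} after averaging in time. For $n \geq 2$, testing the QVI at step $n-1$ with $v = z_{n-1}^N - \varphi$ for arbitrary $\varphi \in V_+$ (admissible by the previous instance of (ii)) yields $A_h z_{n-1}^N \leq hf_{n-1}^N + z_{n-2}^N$, which the monotonicity of $f$ from \eqref{ass:dataNonnegativeAndfInc} together with $z_{n-2}^N \leq z_{n-1}^N$ from (i) upgrade to the desired bound. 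With $w := E_{t_{n-1}^N}(hf_n^N + z_{n-1}^N, z_{n-1}^N)$ (whose obstacle set is non-empty by (ii)), testing with $v = w + (z_{n-1}^N - w)^+$ gives $\langle A_h w - hf_n^N - z_{n-1}^N, (z_{n-1}^N - w)^+\rangle \geq 0$; subtracting the intermediate estimate leads, by the same coercivity/T-monotonicity argument, to $z_{n-1}^N \leq w$.

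A short comparison argument shows $z_{n-1}^N \leq \bar z_{n,N}$, so Birkhoff--Tartar delivers $z_n^N \in [z_{n-1}^N, \bar z_{n,N}]$, which is non-negative and dominates $z_{n-1}^N$. The induction closes because (ii) at stage $n$ then holds via $z_n^N \leq \Phi(z_n^N)(t_{n-1}^N) \leq \Phi(z_n^N)(t_n^N)$, using \eqref{eq:PhiIncreasingInTimeForNonNegV} applied to the non-negative iterate. I expect the main obstacle to be establishing the intermediate inequality $A_h z_{n-1}^N \leq hf_n^N + z_{n-1}^N$: this is precisely where all three hypotheses --- $f$ increasing in $t$, $t \mapsto \Phi(v)(t)$ increasing, and the inductively-built monotonicity of the iterates themselves --- must interlock.
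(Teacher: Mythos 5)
Your proof is correct and follows the paper's overall architecture exactly: induction on $n$, with $z_{n-1}^N$ as subsolution and $\bar z_{n,N}$ as supersolution, concluded by Birkhoff--Tartar. The one place you genuinely diverge is the verification of the subsolution property for $n\geq 2$. You re-derive the one-sided inequality $A_h z_{n-1}^N \leq hf_n^N + z_{n-1}^N$ in the order-dual sense (by testing the step-$(n-1)$ QVI with downward perturbations $v = z_{n-1}^N - \varphi$, $\varphi \in V_+$) and then repeat the T-monotonicity/coercivity computation against $(z_{n-1}^N - w)^+$; the paper instead exploits the fixed-point identity $z_n^N = E_{t_{n-1}^N}(f_n^N + z_{n-1}^N, z_n^N)$ and gets the subsolution property in one line from the comparison principle for $E$ in the data (using $f_{n+1}^N \geq f_n^N$ and $z_n^N \geq z_{n-1}^N$) and in the obstacle (using \eqref{eq:PhiIncreasingInTimeForNonNegV}), so the explicit test-function computation is needed only in the base case. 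Both arguments rest on the same three monotonicities, and both are valid; the paper's inductive step is shorter, while yours makes the mechanism (the distributional inequality satisfied by each iterate) explicit, which is the same pattern the paper itself uses for the base case $z_0$. Your admissibility bookkeeping --- carrying $z_{n-1}^N \leq \Phi(z_{n-1}^N)(t_{n-1}^N)$ as an inductive hypothesis and closing it via the time-monotonicity of $\Phi$ on non-negative iterates --- is exactly what is needed and is implicit in the paper's statement of the induction.
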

\begin{proof}
We first show that $z_0^N=z_0$ is a subsolution for the QVI for $z_1^N$. Indeed, let $w := E_{t_0^N}(hf_1^N + z_0^N, z_0)$ which satisfies
\[w \leq \Phi(z_0)(0) : \langle w-z_0 + hAw - hf_1^N, w- v \rangle \leq 0 \quad \forall v \in V : v \leq \Phi(z_0)(0).\]
The choice $v = w + (z_0-w)^+$ is a feasible test function due to the upper bound on the initial data from assumption \eqref{ass:initialDataNewAss}. This leads to
\begin{align*}
(z_0-w, (z_0-w)^+)_H &\leq h\langle Aw - f_1^N, (z_0-w)^+\rangle\\
&= h\langle Aw - Az_0 + Az_0 - f_1^N, (z_0-w)^+\rangle\\
&\leq h\langle Az_0 -f_1^N, (z_0-w)^+ \rangle \tag{by T-monotonicity of $A$}\\
&= h\langle Az_0 - \frac 1h\int_0^{t_1^N}f(s)\;\mathrm{d}s, (z_0-w)^+\rangle\\
&= \int_0^{t_1^N}\langle Az_0 - f(s), (z_0-w)^+\rangle\;\mathrm{d}s\\
&\leq 0,
\end{align*}
again by assumption \eqref{ass:initialDataNewAss}. This shows that $z_0 \leq E_{t_0^N}(hf_1^N+z_0^N, z_0)$ is indeed a subsolution.

The function $\bar z_{1,N}$ defined through $A_h \bar z_{1,N} = hf_1^N + z_0$
is a supersolution since $\bar z_{1,N}=E_{t_{n-1}^N}(hf_1^N + z_0, \infty) \geq E_{t_{n-1}^N}(hf_1^N + z_0, \bar z_{1,N})$ (thanks to the fact that $\Phi$ is increasing). Then we apply the theorem of Birkhoff--Tartar to obtain existence of $z_1^N \in V$ with $z_1^N \in [z_0^N,\bar z_{1,N}]$. 

Suppose that $z_n^N \in [z_{n-1}^N, \bar z_n^N] \cap \mathbf{Q}_{t_{n-1}^N}(f_n^N + z_{n-1}^N$). Observe that
\[z_n^N = E_{t_{n-1}^N}(f_n^N + z_{n-1}^N, z_n^N) \leq E_{t_{n-1}^N}(f_{n+1}^N + z_n^N, z_n^N) \leq E_{t_{n}^N}(f_{n+1}^N + z_n^N, z_n^N)\]
with the final inequality because the obstacle associated to $t_{n}^N$ is greater than or equal to the obstacle associated to $t_{n-1}^N$ by assumption \eqref{eq:PhiIncreasingInTimeForNonNegV}. We also have
\[\bar z_{n+1}^N = E_{t_{n}^N}(f_{n+1}^N + z_n^N, \infty) \geq E_{t_{n}^N}(f_{n+1}^N + z_n^N, \bar z_{n+1}^N)\]
that is, $z_n^N$ and $\bar z_{n+1}^N$ are sub- and super-solutions respectively for $\mathbf{Q}_{t_{n}^N}(f_{n+1}^N + z_n^N)$ (and the supersolution is greater than the subsolution). Therefore, by Birkhoff--Tartar there exists a $z_{n+1}^N \in \mathbf{Q}_{t_{n}^N}(f_{n+1}^N + z_n^N)$ in the interval $[z_n^N, \bar z_{n+1}^N]$. 
\end{proof}
It appears that we may select \textit{any} solution $z^N_n$ as given by the Birkhoff--Tartar theorem in the previous lemma, regardless of how we choose $z^N_{n-1}$. For example, we may choose $z^N_{n-1}$ to be the minimal solution on its corresponding interval (with endpoints given by the sub- and supersolution) and $z^N_n$ to be the maximal solution on its corresponding interval, with no effect on the resulting analysis (though of course, different choices may lead to different solutions of the original parabolic QVI in question in the end).

We now obtain some bounds on the sequence $\{z_n^N\}$. For this, we use the fact that if $f \in L^2(0,T;H)$, then 
\begin{align}\label{eq:boundednessOfFn}
\sum_{n=1}^N h\norm{f_n^N}{H}^2 \leq \norm{f}{L^2(0,T;H)}^2.
\end{align}
\begin{lem}\label{lem:boundsOnunN}
Under the assumptions of Lemma \ref{lem:unIncreasing}, the following uniform bounds hold: 
\begin{align}
\norm{z_n^N}{H} &\leq C,\label{eq:bdInH}\\
h\sum_{i=1}^n\norm{z_i^N}{V}^2 &\leq C,\label{eq:bdinhSumV}\\
\frac{1}{h}\sum_{i=1}^n\norm{z_i^N-z_{i-1}^N}{H}^2 &\leq C\label{eq:bdhinvDifferencesInH}.
\end{align}
(The final bound needs symmetry of $A$ and the increasing property of the sequence $\{z^N_n\}_{n \in \mathbb{N}}$).
\end{lem}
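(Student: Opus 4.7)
The plan is to derive all three bounds by inserting carefully chosen feasible test functions into \eqref{eq:approximatingQVI}, where admissibility relies on the nonnegativity and monotonicity of the sequence $\{z_n^N\}$ supplied by Lemma \ref{lem:unIncreasing}.

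For \eqref{eq:bdInH} and \eqref{eq:bdinhSumV}, I would first test with $v=0$. This is admissible because $\Phi(0)\geq 0$ (Assumption \ref{ass:OnPhiForTimeDisc}) together with monotonicity of $\Phi$ gives $\Phi(z_n^N)(t_{n-1}^N)\geq \Phi(0)(t_{n-1}^N)\geq 0$. The resulting inequality, after multiplying by $h$ and applying the standard identity $2(a-b,a)_H=\|a\|_H^2-\|b\|_H^2+\|a-b\|_H^2$ and coercivity of $A$, reads
\begin{equation*}
\tfrac{1}{2}\bigl(\|z_n^N\|_H^2-\|z_{n-1}^N\|_H^2+\|z_n^N-z_{n-1}^N\|_H^2\bigr)+\alpha h\|z_n^N\|_V^2 \leq h(f_n^N,z_n^N)_H.
\end{equation*}
Summing over $i=1,\dots,n$ telescopes the $H$-norm terms; bounding the right-hand side by Young's inequality (moving part of $\|z_i^N\|_H^2$ to the left, absorbing via a discrete Grönwall if necessary, or using $\|z_i^N\|_H\leq C\|z_i^N\|_V$ combined with a small coefficient) and controlling $h\sum\|f_i^N\|_H^2$ via \eqref{eq:boundednessOfFn} yields both \eqref{eq:bdInH} and \eqref{eq:bdinhSumV}.

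For \eqref{eq:bdhinvDifferencesInH}, the crucial choice is $v=z_{n-1}^N$, which is feasible precisely because Lemma \ref{lem:unIncreasing} gives $z_{n-1}^N\leq z_n^N\leq \Phi(z_n^N)(t_{n-1}^N)$. Substituting produces
\begin{equation*}
\tfrac{1}{h}\|z_n^N-z_{n-1}^N\|_H^2 + \langle Az_n^N, z_n^N-z_{n-1}^N\rangle \leq (f_n^N, z_n^N-z_{n-1}^N)_H.
\end{equation*}
Symmetry of $A$ then lets me use $2\langle Au,u-v\rangle = \langle Au,u\rangle - \langle Av,v\rangle + \langle A(u-v),u-v\rangle$; the last term is nonnegative by coercivity and may be discarded, leaving a telescoping sum in the bilinear form. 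Cauchy--Schwarz and the scaled Young inequality $\|f_i^N\|_H\|z_i^N-z_{i-1}^N\|_H \leq \tfrac{h}{2\varepsilon}\|f_i^N\|_H^2 + \tfrac{\varepsilon}{2h}\|z_i^N-z_{i-1}^N\|_H^2$ with small $\varepsilon$ absorb the difference terms into the left, and \eqref{eq:boundednessOfFn} closes the estimate.

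The main obstacle is \eqref{eq:bdhinvDifferencesInH}. The naive choice $v=0$ only produces $\sum\|z_i^N-z_{i-1}^N\|_H^2\leq C$ without the $1/h$ weight, which is strictly weaker and would not suffice for later controlling a discrete time derivative in $L^2(0,T;H)$. Extracting the sharper weighted bound genuinely requires both ingredients highlighted in the statement: monotonicity of $\{z_n^N\}$, to make $v=z_{n-1}^N$ admissible in the QVI; and symmetry of $A$, without which the term $\langle Az_n^N,z_n^N-z_{n-1}^N\rangle$ would not telescope cleanly after summation.
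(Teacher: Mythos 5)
Your proposal is correct and follows essentially the same route as the paper: test with $v=0$ plus the identity $(a-b,a)_H=\tfrac12\|a\|_H^2-\tfrac12\|b\|_H^2+\tfrac12\|a-b\|_H^2$ and Young/summation for the first two bounds, then test with $v=z_{n-1}^N$ (admissible by the monotonicity from Lemma \ref{lem:unIncreasing}) and exploit symmetry of $A$ to telescope $\langle Az_n^N,z_n^N-z_{n-1}^N\rangle$, discarding the nonnegative coercive remainder, for the third. The only cosmetic difference is that the paper absorbs the source term via $h\langle f_n,z_n\rangle\le h\|f_n\|_{V^*}\|z_n\|_V$ and Young directly, with no need for a discrete Gr\"onwall step.
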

\begin{proof}In this proof, we omit the superscript $N$ in various quantities for clarity.

We follow the argumentation of \cite[\S 6.3.3]{Glowinski1981}. Testing the QVI \eqref{eq:approximatingQVI2} with $v=0$ (which is valid since $0 \leq \Phi(0)(t_{n-1}) \leq \Phi(z_n)(t_{n-1})$ by assumption on the non-negativity at zero and the second inequality by the increasing property of $\Phi$ and the fact that $z_n \geq 0$) gives
\[\left\langle z_n^N - z_{n-1}^N  + hAz_n^N - hf_n^N, z_n^N \right\rangle \leq 0,\]
 and using the relation 
\begin{equation*}
(a-b,a)_H = \frac 12 \norm{a}{H}^2 - \frac 12 \norm{b}{H}^2 + \frac 12 \norm{a-b}{H}^2,
\end{equation*}
we find
\begin{align*}
\frac{1}{2}\left(\norm{z_n}{H}^2 - \norm{z_{n-1}}{H}^2 + \norm{z_n-z_{n-1}}{H}^2 \right) + hC_a\norm{z_n}{V}^2 &\leq h\langle f_n, z_n \rangle\\
&\leq h\norm{f_n}{V^*}\norm{z_n}{V}\\
&\leq \frac{h}{2C_a}\norm{f_n}{V^*}^2 + \frac{hC_a}{2}\norm{z_n}{V}^2
\end{align*}
with the last line due to Young's inequality. This leads to
\begin{align*}
\norm{z_n}{H}^2 - \norm{z_{n-1}}{H}^2 + \norm{z_n-z_{n-1}}{H}^2 + hC_a\norm{z_n}{V}^2 
&\leq \frac{h}{C_a}\norm{f_n}{V^*}^2,
\end{align*}
whence, summing up between $n=1$ and $n=m$ for some $m \in \mathbb{N}$, and using \eqref{eq:boundednessOfFn},
\begin{align*}
\norm{z_m}{H}^2 - \norm{z_0}{H}^2 + \sum_{n=1}^m \norm{z_n-z_{n-1}}{H}^2 + hC_a\sum_{n=1}^m\norm{z_n}{V}^2 
&\leq C.
\end{align*}
This leads to the first two bounds stated in the lemma. 
For the final bound, testing \eqref{eq:approximatingQVI} with $z_{n-1}$, which is valid since by Lemma \ref{lem:unIncreasing}, $z_{n-1} \leq z_n  \leq \Phi(z_n)(t_{n-1})$, we find
\[\left\langle \frac{z_n - z_{n-1}}{h} + Az_n - f_n, z_n - z_{n-1}\right\rangle \leq 0, \]
which leads to
\begin{align*}
\frac 1h\norm{z_n-z_{n-1}}{H}^2 + \langle Az_n, z_n - z_{n-1} \rangle 
&\leq  \frac{h}{2}\norm{f_n}{H}^2 + \frac{\norm{z_n-z_{n-1}}{H}^2}{2h},
\end{align*}
and here we use
\begin{align*}
&\langle Az_n, z_n - z_{n-1} \rangle\\
 &= \frac 12 \langle Az_n-Az_{n-1}, z_n - z_{n-1}\rangle  + \frac 12 \langle Az_n-Az_{n-1}, z_n - z_{n-1} \rangle + \langle Az_{n-1}, z_n - z_{n-1} \rangle\\
&= \frac 12 \langle Az_n-Az_{n-1}, z_n - z_{n-1}\rangle  + \frac 12 \langle Az_n, z_n \rangle + \frac 12 \langle Az_{n-1}, z_{n-1} \rangle - \langle Az_n, z_{n-1} \rangle + \langle Az_{n-1}, z_n  \rangle\\
&\quad - \langle Az_{n-1},  z_{n-1} \rangle\\
&= \frac 12 \langle Az_n-Az_{n-1}, z_n - z_{n-1}\rangle  + \frac 12 \langle Az_n, z_n \rangle - \frac 12 \langle Az_{n-1}, z_{n-1} \rangle  
\end{align*}
to get
\begin{align*}
\frac{1}{h}\norm{z_n-z_{n-1}}{H}^2 + \langle Az_n-Az_{n-1}, z_n - z_{n-1}\rangle  + \langle Az_n, z_n \rangle - \langle Az_{n-1}, z_{n-1} \rangle  &\leq  h\norm{f_n}{H}^2.
\end{align*}
Neglecting the second term and summing this up from $n=1$ to $n=m$ and using \eqref{eq:boundednessOfFn}, we obtain
\begin{align*}
\frac{1}{h}\sum_{n=1}^m\norm{z_n-z_{n-1}}{H}^2 + \langle Az_m, z_m \rangle  
&\leq \langle Az_0, z_0 \rangle + C
\end{align*}
as desired.
\end{proof}
%
%

\subsection{Interpolants}
We define the piecewise constant interpolants
\[z^N(t) := \sum_{n=1}^{N}z_n^N\chi_{[t^N_{n-1}, t^N_{n})}(t) \quad \text{and}\quad z^N_-(t) := \sum_{n=1}^{N}z_{n-1}^N\chi_{[t^N_{n-1}, t^N_{n})}(t),\]
where $\chi_A$ represents the characteristic function on the set $A$.
In order to ease the presentation, we often use the notation $T_n^{N}:= [t_n^N, t_{n+1}^N).$ 
\begin{cor}\label{cor:interpolantBounds}Under the assumptions of Lemma \ref{lem:boundsOnunN}, $\{z^N\}$ and $\{z^N_-\}$ are bounded uniformly in $L^2(0,T;V) \cap L^\infty(0,T;H)$.
\end{cor}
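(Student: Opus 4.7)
The plan is to translate the discrete bounds from Lemma \ref{lem:boundsOnunN} directly into norms of the piecewise constant interpolants, which is essentially a bookkeeping computation rather than a substantive argument.

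For the $L^\infty(0,T;H)$ bound, I would observe that since $z^N$ is piecewise constant on the intervals $[t^N_{n-1}, t^N_n)$, one has
\[
\norm{z^N}{L^\infty(0,T;H)} = \max_{1 \leq n \leq N}\norm{z_n^N}{H},
\]
which is bounded by \eqref{eq:bdInH} in Lemma \ref{lem:boundsOnunN} uniformly in $N$. For the $L^2(0,T;V)$ bound, since each $z^N_n$ is assumed on an interval of length $h^N$, one computes
\[
\norm{z^N}{L^2(0,T;V)}^2 = \sum_{n=1}^N \int_{t^N_{n-1}}^{t^N_n}\norm{z_n^N}{V}^2\,\mathrm dt = h^N \sum_{n=1}^N \norm{z_n^N}{V}^2,
\]
which is bounded by \eqref{eq:bdinhSumV} (applied with $n=N$) uniformly in $N$.

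For $z^N_-$, the argument is identical up to an index shift. In $H$ the interpolant takes values among $\{z^N_0, z^N_1, \ldots, z^N_{N-1}\}$; the bound on $z^N_0 = z_0$ is immediate from $z_0 \in V \cts H$, and the remaining values are controlled by \eqref{eq:bdInH}. Similarly,
\[
\norm{z^N_-}{L^2(0,T;V)}^2 = h^N \norm{z_0}{V}^2 + h^N \sum_{n=1}^{N-1}\norm{z_n^N}{V}^2 \leq T\norm{z_0}{V}^2 + C,
\]
again uniformly in $N$. There is no real obstacle here; the statement is essentially a repackaging of Lemma \ref{lem:boundsOnunN} in the language of interpolants, and the only point worth flagging is the harmless boundary term involving $z_0$ that appears for $z^N_-$.
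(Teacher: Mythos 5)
Your proof is correct and follows essentially the same route as the paper: both reduce the interpolant norms to the discrete bounds \eqref{eq:bdInH} and \eqref{eq:bdinhSumV} by exploiting the disjointness of the intervals $T_{n-1}^N$. The paper simply dispatches $z^N_-$ with "a similar argument," so your explicit handling of the $z_0$ boundary term is a harmless elaboration of the same idea.
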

\begin{proof}
Since the $T_n^N$ are disjoint, we see that
\begin{align*}
\norm{z^N}{L^\infty(0,T;H)} 
= \esssup_{t \in [0,T]} \sum_n \norm{z^N_n}{H}\chi_{T^N_{n-1}}(t)
\leq C\esssup_{t \in [0,T]} \sum_n \chi_{T^N_{n-1}}(t)
= C 
\end{align*}
by \eqref{eq:bdInH}, and
\begin{align*}
\norm{z^N}{L^2(0,T;V)}^2 &= \int_0^T \norm{\sum_{n=1}^N z^N_n \chi_{T^N_{n-1}}(t)}{V}^2 
= \sum_{n=1}^N \int_{T^N_{n-1}} \norm{z^N_n}{V}^2 
= h\sum_{n=1}^N\norm{z^N_n}{V}^2
\leq C
\end{align*}
by \eqref{eq:bdinhSumV}. A similar argument leads to the bounds on $z^N_-$.
\end{proof}
To be able to handle the time derivative, it is useful to construct the interpolant 
\begin{align*}
\hat z^N(t) &:= z_0 + \int_0^t \sum_{n=1}^N \frac{z_n^N - z_{n-1}^N}{h}\chi_{[t^N_{n-1}, t^N_n)}(s)\;\mathrm{d}s\\
&= z_{n-1}^N + \frac{z_n^N - z_{n-1}^N}{h}(t-t^N_{n-1})\quad \text{if $t \in [t^N_{n-1},t^N_n)$},
\end{align*}
known as Rothe's function, which also has the time derivative 
\[\partial_t \hat z^N(t) = \frac{z_n^N - z_{n-1}^N}{h}\quad \text{if $t \in [t^N_{n-1},t^N_n)$}.\]
\begin{cor}\label{cor:boundOnDiscTimeDeriv}
Under the assumptions of Lemma \ref{lem:boundsOnunN}, $\{\hat z^N\}$ is bounded uniformly in $W_s(0,T)$. 
\end{cor}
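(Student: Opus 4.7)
The plan is to unpack the definition of $\hat z^N$ on each subinterval $[t_{n-1}^N, t_n^N)$ and feed the uniform bounds from Lemma \ref{lem:boundsOnunN} directly into the three pieces of the $W_s(0,T)$ norm: $L^2(0,T;V)$, $L^2(0,T;H)$, and $\partial_t(\cdot) \in L^2(0,T;H)$.

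First I would handle the spatial regularity. On each interval $T^N_{n-1}$, $\hat z^N(t) = \lambda(t) z_{n-1}^N + (1-\lambda(t)) z_n^N$ with $\lambda(t) = (t_n^N - t)/h \in [0,1]$, so $\|\hat z^N(t)\|_V^2 \leq 2\|z_{n-1}^N\|_V^2 + 2\|z_n^N\|_V^2$. Integrating over $[0,T]$ and splitting the integral over the intervals $T^N_{n-1}$ gives
\[
\norm{\hat z^N}{L^2(0,T;V)}^2 \leq 2h\sum_{n=1}^N \left(\norm{z_{n-1}^N}{V}^2 + \norm{z_n^N}{V}^2\right),
\]
which is uniformly bounded by \eqref{eq:bdinhSumV} (plus the term involving $z_0$, which is fixed in $V$). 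The identical argument with the $H$-norm together with \eqref{eq:bdInH} yields the $L^2(0,T;H)$ bound.

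Next I would treat the time derivative. Since $\partial_t \hat z^N$ is the piecewise constant function taking value $(z_n^N - z_{n-1}^N)/h$ on $T^N_{n-1}$, a direct computation gives
\[
\norm{\partial_t \hat z^N}{L^2(0,T;H)}^2 = \sum_{n=1}^N h \cdot \frac{1}{h^2}\norm{z_n^N - z_{n-1}^N}{H}^2 = \frac{1}{h}\sum_{n=1}^N \norm{z_n^N - z_{n-1}^N}{H}^2,
\]
which is exactly the quantity controlled uniformly by \eqref{eq:bdhinvDifferencesInH}. Combining the three bounds gives uniform boundedness of $\{\hat z^N\}$ in $W_s(0,T)$.

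There is essentially no obstacle here: the Rothe function is engineered so that its time derivative equals the divided difference appearing in the discrete energy estimate, and its piecewise-linear spatial profile is controlled by its nodal values. The only things to be slightly careful about are the initial term $z_0$ (which is fixed and lies in $V$, hence causes no issue) and making sure the constant in the convex-combination inequality is benign, both of which are routine.
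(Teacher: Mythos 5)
Your proof is correct and follows essentially the same route as the paper: decompose over the subintervals, control the piecewise-linear interpolant by its nodal values to invoke \eqref{eq:bdinhSumV}, and compute the piecewise-constant time derivative exactly to invoke \eqref{eq:bdhinvDifferencesInH}. The only cosmetic difference is that you bound $\hat z^N(t)$ as a convex combination of $z_{n-1}^N$ and $z_n^N$, whereas the paper keeps the form $z_{n-1}^N + h^{-1}(z_n^N-z_{n-1}^N)(t-t_{n-1}^N)$ and integrates $(t-t_{n-1}^N)^2$ explicitly; both reduce to the same estimate.
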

\begin{proof}
We see that
\begin{align*}
\norm{\hat z^N}{L^2(0,T;V)}^2 &= \sum_{n=1}^N \int_{t^N_{n-1}}^{t^N_n}\norm{\hat z^N(t)}{V}^2\\ 
&\leq 2\sum_{n=1}^N\int_{t^N_{n-1}}^{t^N_n} \lVert z_{n-1}^N\rVert_V^2 +  \frac{2}{h^2} \sum_{n=1}^N\int_{t^N_{n-1}}^{t^N_n}(t-t^N_{n-1})^2\lVert z_n^N - z_{n-1}^N\rVert_V^2\\
&= 2h\sum_{n=1}^N\lVert z_{n-1}^N\rVert_V^2  +\frac{2}{3h^2} \sum_{n=1}^N[(t-t^N_{n-1})^3]^{t^N_n}_{t^N_{n-1}}\lVert z_n^N - z_{n-1}^N\rVert_V^2\\
&\leq C_1 + \frac{2}{3h^2} \sum_{n=1}^N(t^N_n-t^N_{n-1})^3\lVert z_n^N - z_{n-1}^N\rVert_V^2\\
&=C_1 + \frac{2h}{3} \sum_{n=1}^N\lVert {z_n^N - z_{n-1}^N}\rVert_V^2\\
&\leq C_2,
\end{align*}
with the last two inequalities by \eqref{eq:bdinhSumV}. Regarding the time derivative, using \eqref{eq:bdhinvDifferencesInH}, we find
\begin{align*}
\norm{\partial_t \hat z^N}{L^2(0,T;H)}^2 
&= \sum_{n=1}^N \int_{T^N_{n-1}} \norm{\partial_t z^N(t)}{H}^2\\
&= \sum_{n=1}^N \int_{T^N_{n-1}} \norm{\frac{z_n^N-z_{n-1}^N}{h}}{H}^2\\
&= \frac{1}{h}\sum_{n=1}^N \norm{z_n^N-z_{n-1}^N}{H}^2\\
&\leq C_3.
\end{align*}
\end{proof}

\subsection{Passing to the limit in the interpolants}
By the previous subsection, we have the existence of $z, \hat z$ such that, for a relabelled subsequence, the following convergences hold:
\begin{equation}\label{eq:listOfConvergences}
\begin{aligned}
z^N &\weaklystar z \quad &&\text{in $L^\infty(0,T;H)$},\\
z^N &\weaklyto z &&\text{in $L^2(0,T;V)$},\\
\hat z^N &\weaklyto \hat z &&\text{in $W_s(0,T)$}.\\
\end{aligned}
\end{equation}
\begin{lem}\label{lem:identificationOfHatuAndu}We have $\hat z \equiv z$. Furthermore, 
\[z^N_- \weaklyto z \quad\text{in $L^2(0,T;V)$ and weakly-star in $L^\infty(0,T;H)$}.\]
\end{lem}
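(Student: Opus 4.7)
The plan is to show both statements by proving that the differences $\hat z^N - z^N$ and $z^N_- - z^N$ vanish strongly in $L^2(0,T;H)$ as $N \to \infty$, then invoke uniqueness of weak (respectively weak-star) limits to identify the limit in the stronger topologies.

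First I would compute explicitly, for $t \in [t_{n-1}^N, t_n^N)$, the pointwise difference
\[
\hat z^N(t) - z^N(t) = z_{n-1}^N + \frac{z_n^N - z_{n-1}^N}{h}(t - t_{n-1}^N) - z_n^N = \frac{t - t_n^N}{h}(z_n^N - z_{n-1}^N).
\]
Taking $L^2(0,T;H)$ norms and summing over $n$ gives
\[
\norm{\hat z^N - z^N}{L^2(0,T;H)}^2 = \sum_{n=1}^N \int_{t_{n-1}^N}^{t_n^N} \frac{(t-t_n^N)^2}{h^2}\norm{z_n^N - z_{n-1}^N}{H}^2 \, \mathrm{d}t = \frac{h^2}{3}\cdot\frac{1}{h}\sum_{n=1}^N \norm{z_n^N - z_{n-1}^N}{H}^2,
\]
which by the bound \eqref{eq:bdhinvDifferencesInH} is $O(h^2)$ and hence tends to zero. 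Since the convergences in \eqref{eq:listOfConvergences} yield $\hat z^N \weaklyto \hat z$ and $z^N \weaklyto z$ in $L^2(0,T;H)$ (via $W_s(0,T) \cts L^2(0,T;H)$ and $L^2(0,T;V) \cts L^2(0,T;H)$), uniqueness of weak limits forces $\hat z = z$ as elements of $L^2(0,T;H)$, and hence a.e., giving $\hat z \equiv z$.

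For $z^N_-$, the difference on $[t_{n-1}^N, t_n^N)$ is simply $z^N_-(t) - z^N(t) = z_{n-1}^N - z_n^N$, so
\[
\norm{z^N_- - z^N}{L^2(0,T;H)}^2 = \sum_{n=1}^N h\norm{z_n^N - z_{n-1}^N}{H}^2 = h^2 \cdot \frac{1}{h}\sum_{n=1}^N \norm{z_n^N - z_{n-1}^N}{H}^2 \leq C h^2 \to 0,
\]
again by \eqref{eq:bdhinvDifferencesInH}. Combined with the uniform bounds on $\{z^N_-\}$ in $L^2(0,T;V) \cap L^\infty(0,T;H)$ from Corollary \ref{cor:interpolantBounds}, this gives, along a (relabelled) subsequence, $z^N_- \weaklyto \zeta$ in $L^2(0,T;V)$ and $z^N_- \weaklystar \zeta$ in $L^\infty(0,T;H)$ for some $\zeta$; both modes of convergence imply weak convergence in $L^2(0,T;H)$, where the strong $L^2(0,T;H)$ convergence $z^N_- - z^N \to 0$ and the weak $L^2(0,T;H)$ convergence $z^N \weaklyto z$ identify $\zeta = z$. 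A standard subsequence-and-uniqueness argument promotes the convergence from a subsequence to the whole sequence.

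The only nontrivial point is bookkeeping: making sure that the weak-star and weak limits extracted live in the right spaces and that uniqueness is applied in a topology where both sequences are known to converge. The explicit difference calculations themselves are straightforward once one writes out the interpolants, and the decisive estimate in each case is the $H$-bound \eqref{eq:bdhinvDifferencesInH} that provides the crucial factor of $h$.
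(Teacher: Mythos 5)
Your proposal is correct and follows essentially the same route as the paper: both arguments rest on showing that the pairwise differences of the interpolants vanish in $L^2(0,T;H)$ via the estimate \eqref{eq:bdhinvDifferencesInH}, the only cosmetic difference being that you compare $\hat z^N$ with $z^N$ directly where the paper compares $z^N_-$ with $\hat z^N$. If anything, your identification of the limits using only weak convergence in $L^2(0,T;H)$ is slightly cleaner than the paper's passing assertion that $z^N \to z$ strongly in $L^2(0,T;H)$, which as stated would require an additional compactness argument that your proof does not need.
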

\begin{proof}
Observe that
\begin{align*}
\int_0^T \norm{z^N_-(t)-z^N(t)}{H}^2 &= \sum_n \int_{T_{n-1}^N}\norm{z^N_{n-1}-z^N_n}{H}^2 = h\sum_n \norm{z^N_{n-1}-z^N_n}{H}^2 \leq Ch^2
\end{align*}
by \eqref{eq:bdhinvDifferencesInH}. Thus as $N \to \infty$, $z^N_- - z^N \to 0$ in $L^2(0,T;H)$. Since $z^N \weaklyto z$ in $L^2(0,T;V)$, $z^N \to z$ in $L^2(0,T;H)$ and we obtain $z^N_- \to z$ in $L^2(0,T;H)$ and weakly in $L^2(0,T;V)$.

Now consider
\begin{align*}
\int_0^T \norm{z^N_-(t)-\hat z^N(t)}{H}^2 &= \int_0^T \norm{\sum_n\chi_{[t_{n-1},t_n)}(t)(t-{t_{n-1}})\frac{z_n^N-z_{n-1}^N}{h}}{H}^2\\
&= \sum_n\int_{t_{n-1}}^{t_n} \norm{\chi_{[t_{n-1},t_n)}(t)(t-{t_{n-1}})\frac{z_n^N-z_{n-1}^N}{h}}{H}^2\\
&\leq \frac{1}{h^2}\sum_n\int_{t_{n-1}}^{t_n} (t-{t_{n-1}})^2\norm{z_n^N-z_{n-1}^N}{H}^2\\
&= \frac{h}{3}\sum_n\norm{z_n^N-z_{n-1}^N}{H}^2\tag{see the proof of Corollary \ref{cor:boundOnDiscTimeDeriv}}\\
&\leq Ch^2
\end{align*}
with the final line by \eqref{eq:bdhinvDifferencesInH}. This shows that $z^N_- - \hat z^N \to 0$ in $L^2(0,T;H)$, allowing us to identify $\hat z = z$ as desired.
%
\end{proof}
The convergence results above obviously imply that $\hat z^N \to z$ in $C^0([0,T];H)$ (due to Aubin--Lions), so that $z_0 = \hat z^N(0) \to z(0)$, i.e., $z$ has the right initial data. Let us now see that $z$ is feasible.
\begin{lem}[Feasibility of the limit]Let the following conditions hold: 
\begin{align}
&\text{$\{v_n\} \subset V_+$} \implies \textstyle \sum_{n=1}^N \Phi(v_n)(t)\chi_{T_{n-1}}(t) \leq \Phi\left(\textstyle \sum_{n=1}^N v_n\chi_{T_{n-1}}(\cdot)\right)(t)\label{ass:forFeasibility},\\
&\nonumber \text{$v_n \weaklyto v$ in $L^2(0,T;V)$ and weakly-* in $L^\infty(0,T;H)$ with $v_n(t) \leq \Phi(v_{n})(t)$}\\
&\quad\quad\quad\quad\quad\quad\quad\quad\quad\quad\quad\quad\quad\quad\quad\quad\quad\quad\quad\quad\quad\quad\quad\quad\implies v(t) \leq \Phi(v)(t)\label{ass:ensuresFeasibility2}.
\end{align}
Then $z(t) \leq \Phi(z)(t)$ for a.e. $t$. 
\end{lem}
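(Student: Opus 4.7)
My plan is to show first that each interpolant $z^N$ already satisfies the obstacle constraint pointwise (in terms of its own image under $\Phi$), and then to invoke the closure hypothesis \eqref{ass:ensuresFeasibility2} to pass to the limit. The hypotheses \eqref{ass:forFeasibility} and \eqref{ass:ensuresFeasibility2} are tailor-made for exactly this two-step procedure, so the work consists mainly of matching the data and checking signs.

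\textbf{Step 1: Upgrading to a pointwise-in-$t$ constraint on $z^N$.} The discretised problem gives $z_n^N \leq \Phi(z_n^N)(t_{n-1}^N)$, but this is an inequality at a single node, not at a general $t \in T_{n-1}^N$. Here I invoke the time-monotonicity assumption \eqref{eq:PhiIncreasingInTimeForNonNegV}: since $z_n^N \in V_+$ by Lemma \ref{lem:unIncreasing}, the map $t \mapsto \Phi(z_n^N)(t)$ is increasing, so for any $t \in T_{n-1}^N$ we have $\Phi(z_n^N)(t_{n-1}^N) \leq \Phi(z_n^N)(t)$, and therefore
\[z^N(t) = z_n^N \leq \Phi(z_n^N)(t_{n-1}^N) \leq \Phi(z_n^N)(t)\qquad \text{for }t\in T_{n-1}^N.\]
Summing this against $\chi_{T_{n-1}^N}(t)$ over $n$, and applying assumption \eqref{ass:forFeasibility} to the sequence $\{z_n^N\}\subset V_+$, yields
\[z^N(t) = \sum_{n=1}^N z_n^N\chi_{T_{n-1}^N}(t) \leq \sum_{n=1}^N \Phi(z_n^N)(t)\chi_{T_{n-1}^N}(t)\leq \Phi\!\left(\sum_{n=1}^N z_n^N\chi_{T_{n-1}^N}(\cdot)\right)\!(t) = \Phi(z^N)(t)\]
for a.e.\ $t \in [0,T]$.

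\textbf{Step 2: Passing to the limit.} From \eqref{eq:listOfConvergences} we have $z^N \weaklyto z$ in $L^2(0,T;V)$ and $z^N \weaklystar z$ in $L^\infty(0,T;H)$. Combined with the pointwise feasibility $z^N(t) \leq \Phi(z^N)(t)$ from Step 1, assumption \eqref{ass:ensuresFeasibility2} applies directly with $v_n := z^N$ and gives $z(t) \leq \Phi(z)(t)$ for a.e.\ $t$, which is the claim.

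The only subtle point is Step 1, since the discrete inequality initially lives only at the grid nodes $t_{n-1}^N$; without \eqref{eq:PhiIncreasingInTimeForNonNegV} one could not move it to all of $T_{n-1}^N$, and without \eqref{ass:forFeasibility} one could not pull the sum inside $\Phi$. Once these two are in place, the rest of the argument is formal and the conclusion is immediate from the closedness-type hypothesis \eqref{ass:ensuresFeasibility2}; I expect no further technical obstacle.
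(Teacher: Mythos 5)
Your proposal is correct and follows essentially the same route as the paper: use the time-monotonicity \eqref{eq:PhiIncreasingInTimeForNonNegV} to upgrade the nodal inequality $z_n^N \leq \Phi(z_n^N)(t_{n-1}^N)$ to all $t \in T_{n-1}^N$, apply \eqref{ass:forFeasibility} to pull the sum inside $\Phi$ and obtain $z^N(t) \leq \Phi(z^N)(t)$, then pass to the limit via \eqref{ass:ensuresFeasibility2}. No gaps.
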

\begin{proof}
Since by \eqref{eq:PhiIncreasingInTimeForNonNegV}, for $t \in T_{n-1}^N$, $z^N_n \leq \Phi(z^N_n)(t^N_{n-1}) \leq \Phi(z^N_n)(t)$, we have
\begin{align*}
z^N(t) 
\leq \sum_{n=1}^N \Phi(z^N_n)(t)\chi_{[t^N_{n-1},t^N_n)}(t).
\end{align*}
Using the assumption \eqref{ass:forFeasibility} applied to the right-hand side above, we have
\begin{align*}
z^N(t) &\leq \Phi\left(\sum_{n=1}^N z^N_n\chi_{[t^N_{n-1},t^N_n)}(\cdot)\right)(t) = \Phi(z^N)(t).
\end{align*}
Passing to the limit here using \eqref{ass:ensuresFeasibility2} gives the result.
\end{proof}
Regarding the assumptions of the previous lemma, \eqref{ass:ensuresFeasibility2} is a mild continuity requirement on $\Phi$ whereas for \eqref{ass:forFeasibility}, consider the superposition case $\Phi(v)(t) := \hat \Phi(t,v(t))$. Then if $\{v_n\}_{n \in \mathbb{N}} \subset V$, we have
\begin{align*}
\sum_n \Phi(v_n)(t)\chi_{T_{n-1}^N}(t) &= \sum_n \hat \Phi(t,v_n)\chi_{T_{n-1}^N}(t),
\end{align*}
and now supposing $t \in T_{j-1}^N$ for some $j$, this becomes
\begin{align*}
\sum_n \Phi(v_n)(t)\chi_{T_{n-1}^N}(t) &=  \hat\Phi(t,v_j)
=\hat\Phi\left(t,\sum_n v_n\chi_{T_{n-1}^N}(t)\right)
= \Phi\left(\sum_n v_n\chi_{T_{n-1}^N}\right)(t)
\end{align*}
and since $j$ is arbitrary, this holds for all $t$. Hence \eqref{ass:forFeasibility} holds with equality.

In order to pass to the limit in the inequality satisfied by the interpolant $z^N$, we have to be able to approximate test functions in the limiting constraint set. This is possible as the next lemma shows.
\begin{lem}[Recovery sequence]\label{lem:recoverySequence}
Assume the condition 
\begin{align}
\nonumber &\forall \epsilon > 0, \{w_N\} : w_N \weaklyto w \text{ in } L^2(0,T;V) \text{ and weakly-$*$ in }L^\infty(0,T;H), \exists N_0 \in \mathbb{N} : \\
&\quad\quad N \geq N_0 \implies  \sum_{n=1}^N\int_{T_{n-1}^N}\norm{\Phi(w_N(t))(t_{n-1}^N) - \Phi(w)(t)}{V}^2 \leq \epsilon.\label{ass:CA}
\end{align}
Then for every $v \in L^2(0,T;V)$ with $v(t) \leq \Phi(z)(t)$, there exists a $v^N \in L^2(0,T;V)$ such that 
\begin{align*}
&v^N|_{T^N_{n-1}} \leq \Phi(z_n^N)(t_{n-1}^N)\\
&v^N \to v \quad \text{in $L^2(0,T;V)$}.
\end{align*}
\end{lem}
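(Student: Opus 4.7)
The plan is to construct $v^N$ by correcting the given test function $v$ by the error between the discrete obstacle $\Phi(z^N_n)(t^N_{n-1})$ and the continuous obstacle $\Phi(z)(t)$. Concretely, on each subinterval $T^N_{n-1}$, define
\[
v^N(t) := v(t) + \Phi(z^N_n)(t^N_{n-1}) - \Phi(z)(t).
\]
This is the natural choice because the constraint $v^N \leq \Phi(z^N_n)(t^N_{n-1})$ then reduces to $v(t) \leq \Phi(z)(t)$, which holds by hypothesis.

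First I would verify the feasibility: for $t \in T^N_{n-1}$,
\[
v^N(t) - \Phi(z^N_n)(t^N_{n-1}) = v(t) - \Phi(z)(t) \leq 0,
\]
so $v^N|_{T^N_{n-1}} \leq \Phi(z^N_n)(t^N_{n-1})$. Next I would check that $v^N \in L^2(0,T;V)$: the correction term is piecewise constant in time on $[0,T]$, each piece $\Phi(z^N_n)(t^N_{n-1})$ lies in $V$ by assumption \eqref{ass:PhiVInC0V} (since $\Phi$ maps $H$ into $C^0([0,T];V)$), and there are only finitely many pieces, so it is bounded (in particular in $L^2(0,T;V)$), and $v \in L^2(0,T;V)$ by assumption.

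For the convergence $v^N \to v$ in $L^2(0,T;V)$, observe that by construction
\[
\norm{v^N - v}{L^2(0,T;V)}^2 = \sum_{n=1}^N \int_{T^N_{n-1}} \norm{\Phi(z^N_n)(t^N_{n-1}) - \Phi(z)(t)}{V}^2 \, dt.
\]
This is precisely the quantity appearing in assumption \eqref{ass:CA}, applied with $w_N = z^N$ and $w = z$. Since we already know from the list of convergences \eqref{eq:listOfConvergences} and Lemma \ref{lem:identificationOfHatuAndu} that $z^N \weaklyto z$ in $L^2(0,T;V)$ and $z^N \weaklystar z$ in $L^\infty(0,T;H)$, hypothesis \eqref{ass:CA} is applicable and yields the desired convergence.

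The main obstacle here is a modeling one rather than a technical one: it is not obvious a priori what the correct compensation of $v$ should be, because a naive truncation like $v^N := \min(v, \Phi(z^N_n)(t^N_{n-1}))$ would force a lattice condition on $V$ and complicate the $V$-norm convergence. The additive shift above sidesteps this by exactly transferring the slack $\Phi(z)(t) - v(t) \geq 0$ to the discrete side, which reduces everything directly to the continuity-of-$\Phi$ hypothesis \eqref{ass:CA}.
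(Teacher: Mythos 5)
Your construction is exactly the one used in the paper: the additive shift $v^N(t) = v(t) + \Phi(z_n^N)(t_{n-1}^N) - \Phi(z)(t)$ on each $T_{n-1}^N$, with feasibility following immediately from $v \leq \Phi(z)$ and convergence from applying \eqref{ass:CA} to $w_N = z^N$, $w = z$ via the convergences already established in \eqref{eq:listOfConvergences}. The proof is correct and matches the paper's argument.
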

\begin{proof}
Let $v \in L^2(0,T;V)$ with $v(t) \leq \Phi(z)(t)$ and define
\[v^N_n(t) := v(t) + \Phi(z_n^N)(t_{n-1}^N) - \Phi(z)(t)\] which satisfies $v_n^N(t) \leq \Phi(z_n^N)(t_{n-1}^N)$  and set
\[v^N(t) := \sum_{n=1}^N \chi_{T_{n-1}}(t) (v(t) + \Phi(z_n^N)(t_{n-1}^N) - \Phi(z)(t)).\]
Take $\epsilon > 0$. We have
\begin{align*}
\int_0^T \norm{v^N(t)-v(t)}{V}^2 &= \sum_{n=1}^N\int_{T_{n-1}^N}\norm{\Phi(z_n^N)(t_{n-1}^N) - \Phi(z)(t)}{V}^2\\
 &= \sum_{n=1}^N\int_{T_{n-1}^N}\norm{\Phi(z^N(t))(t_{n-1}^N) - \Phi(z)(t)}{V}^2\\
 &\leq \epsilon
\end{align*}
by assumption \eqref{ass:CA} as long as $N \geq N_0$. 
%
%
This shows that $v^N \to v$.
\end{proof}
Let us consider two cases under which the assumption \eqref{ass:CA} of the previous lemma holds.

\medskip

\noindent\textsc{1. Superposition case}. In case where $\Phi(v)(t) := \hat \Phi(v(t))$, \eqref{ass:CA} translates to a complete continuity assumption. Indeed, the sum term in \eqref{ass:CA} is
\begin{align*}
\sum_{n=1}^N\int_{T_{n-1}^N}\norm{\Phi(w_N(t))(t_{n-1}^N) - \Phi(w)(t)}{V}^2 &= \sum_{n=1}^N\int_{T_{n-1}^N}\norm{\hat \Phi(w_N(t)) - \hat\Phi(w(t))}{V}^2\\
&= \int_0^T\norm{\hat \Phi(w_N(t)) - \hat\Phi(w(t))}{V}^2,
\end{align*}
so that \eqref{ass:CA} simply asks for $\Phi(w_N) \to \Phi(w)$ in $L^2(0,T;V)$ whenever $w_N \weaklyto w$ in $L^2(0,T;V)$ and weakly-* in $L^\infty(0,T;H)$.

\medskip

\noindent\textsc{2. VI case}. When $\Phi(v)(t) \equiv \psi(t)$ for some obstacle $\psi$ 
and if $\psi \in C^0([0,T];V)$ then for every $\epsilon > 0$, there exists $\delta > 0$ such that $|t-s| \leq \delta$ implies $\norm{\psi(t)-\psi(s)}{V} \leq \sqrt{\epsilon\slash T}.$  When $t \in T_{n-1}^N$, we have $|t^N_{n-1} - t| \leq |t^N_{n-1}-t^N_n| \leq h^N \to 0$ as $N \to \infty$. So for sufficiently large $N$, say $N \geq N_0$, we have $|t^N_{n-1}-t| \leq \delta$ and thus
\[\epsilon \geq \sum_n \int_{T_{n-1}^N}\norm{\psi(t^N_{n-1})-\psi(t)}{V}^2 = \sum_n \int_{T_{n-1}^N}\norm{\Phi(w_N(t))(t_{n-1}^N) - \Phi(w)(t)}{V}^2\]
and so \eqref{ass:CA} also holds.

\begin{theorem}\label{thm:discretisationExistence}
Let Assumption \ref{ass:OnPhiForTimeDisc}, \eqref{ass:dataNonnegativeAndfInc}, \eqref{ass:initialDataNewAss}, \eqref{eq:PhiIncreasingInTimeForNonNegV}, \eqref{ass:forFeasibility}, \eqref{ass:ensuresFeasibility2} and \eqref{ass:CA} hold. Then there exists a non-negative solution $z \in W_s(0,T)$ to \eqref{eq:qvi} which is the limit of the interpolants $\{z^N\}, \{\hat z^N\}.$ Furthermore, the map $t \mapsto z(t)$ is increasing.
\end{theorem}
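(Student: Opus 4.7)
The plan is to collect the objects and properties already established in the preceding lemmas, pass to the limit in the time-discretised inequality \eqref{eq:approximatingQVI} tested against the recovery sequence from Lemma \ref{lem:recoverySequence}, and then read off the monotonicity in time from the discrete monotonicity in Lemma \ref{lem:unIncreasing}. Most of the ``set-theoretic'' work has been done: Lemma \ref{lem:unIncreasing} gives $z_n^N \geq 0$ so that the weak limit $z$ lies in $H_+$ a.e.\ (since $H_+$ is convex and closed, hence weakly closed); Corollary \ref{cor:boundOnDiscTimeDeriv} together with Lemma \ref{lem:identificationOfHatuAndu} place $z \in W_s(0,T)$; the Aubin--Lions remark after Lemma \ref{lem:identificationOfHatuAndu} ensures $z(0)=z_0$; and the feasibility lemma together with assumptions \eqref{ass:forFeasibility} and \eqref{ass:ensuresFeasibility2} gives $z(t)\leq \Phi(z)(t)$ for a.e.\ $t$.

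The central step is passing to the limit in the variational inequality. Given a test function $v \in L^2(0,T;V)$ with $v(t) \leq \Phi(z)(t)$, I would invoke Lemma \ref{lem:recoverySequence} to produce $v^N \to v$ in $L^2(0,T;V)$ with $v^N|_{T_{n-1}^N}\leq \Phi(z_n^N)(t_{n-1}^N)$. Testing \eqref{eq:approximatingQVI} with $v^N(t)$ on each $T_{n-1}^N$ and summing against $\chi_{T_{n-1}^N}(t)$, then integrating over $[0,T]$, produces
\[
\int_0^T \langle \partial_t \hat z^N(t) + Az^N(t) - \bar f^N(t),\, z^N(t) - v^N(t)\rangle\, dt \leq 0,
\]
where $\bar f^N := \sum_n f_n^N \chi_{T_{n-1}^N}$. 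Standard arguments give $\bar f^N \to f$ in $L^2(0,T;H)$; Aubin--Lions applied to $\hat z^N$ together with $\hat z^N - z^N \to 0$ in $L^2(0,T;H)$ (Lemma \ref{lem:identificationOfHatuAndu}) yields $z^N \to z$ strongly in $L^2(0,T;H)$. I then pass term by term: weak-strong pairing handles $\int_0^T (\partial_t \hat z^N, z^N-v^N)_H \to \int_0^T (z',z-v)_H$ and $\int_0^T\langle Az^N,v^N\rangle \to \int_0^T\langle Az,v\rangle$ and the source term; the one non-equality term is $\int_0^T\langle Az^N, z^N\rangle$, for which coercivity and symmetry of $A$ make the quadratic form $v \mapsto \int_0^T \langle Av, v\rangle$ weakly lower semicontinuous on $L^2(0,T;V)$, giving $\liminf \geq \int_0^T \langle Az, z\rangle$. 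Because the inequality is $\leq 0$ this is in the correct direction, yielding \eqref{eq:qvi}.

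The main obstacle is precisely handling the quadratic term $\int_0^T\langle Az^N,z^N\rangle$ under mere weak convergence; this is resolved by the symmetric-coercive structure of $A$. (An alternative would be to rewrite $\int_0^T(\partial_t \hat z^N, z^N)_H = \frac12 \|\hat z^N(T)\|_H^2 - \frac12\|z_0\|_H^2 + o(1)$ via $z^N - \hat z^N \to 0$ in $L^2(0,T;H)$ and apply weak lower semicontinuity of the $H$-norm to the trace at $T$, but the direct weak-strong argument suffices since $z^N \to z$ in $L^2(0,T;H)$.)

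Finally, for monotonicity in time, Lemma \ref{lem:unIncreasing} says $z_{n-1}^N \leq z_n^N$, so $z^N(t) - z^N(s) \in H_+$ whenever $s \leq t$ lie in distinct subintervals. For fixed $h>0$ and any $g \in H_+$, $[a,b]\subset[0,T-h]$, we have
\[
\int_a^b \bigl( z^N(t+h) - z^N(t),\, g\bigr)_H\, dt \geq 0
\]
for $N$ large. Using the strong $L^2(0,T;H)$ convergence of $z^N$ the inequality passes to the limit; letting $[a,b]$ shrink and applying Lebesgue differentiation then self-duality of $H_+$ shows $z(t+h)-z(t)\in H_+$ for a.e.\ $t$, i.e.\ $t \mapsto z(t)$ is increasing.
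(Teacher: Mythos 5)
Your proposal is correct and follows essentially the same route as the paper's proof: recovery sequence from Lemma \ref{lem:recoverySequence}, summation of the discrete inequalities into an integral inequality for the interpolants, weak--strong pairing plus weak lower semicontinuity of the quadratic form $v\mapsto\int_0^T\langle Av,v\rangle$ for the limit passage, and the discrete monotonicity of $\{z_n^N\}$ for the increasing-in-time property. The only (cosmetic) difference is that you establish monotonicity of $t\mapsto z(t)$ via integration against test intervals and Lebesgue differentiation, whereas the paper extracts a subsequence converging pointwise a.e.\ in $H$ and passes to the limit directly in the pointwise inequality.
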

\begin{proof}
Let $v \in L^2(0,T;V)$ satisfy $v(t) \leq \Phi(z)(t)$ and let us take $v^N$ as defined in the proof of Lemma \ref{lem:recoverySequence}. Then by \eqref{eq:approximatingQVI},
\begin{align*}
&\int_0^T \langle \partial_t \hat z^N(t) + Az^N(t) - f^N(t), z^N(t) - v^N(t) \rangle\\
&\quad= \sum_{n=1}^N \int_{t_{n-1}^N}^{t_{n}^N} \langle \partial_t \hat z^N(t) + Az^N(t) - f^N(t), z^N(t) - v^N(t) \rangle\\
&\quad= \sum_{n=1}^N \int_{t_{n-1}^N}^{t_{n}^N} \left\langle \frac{z_{n}^N - z_{n-1}^N}{h} + Az^N_n - f^N_n, z^N_n - v^N_n(t)\right\rangle\\
&\quad\leq 0.
\end{align*}
Writing the duality product involving the time derivative as an inner product, we have, using the convergences in \eqref{eq:listOfConvergences} and the weak lower semicontinuity of the bilinear form generated by $A$,
\begin{align*}
0 &\geq 
\int_0^T (\partial_t \hat z^N(t), z^N(t) - v^N(t)) + \langle Az^N(t) - f^N(t), z^N(t) - v^N(t) \rangle\\
&\to
\int_0^T (z'(t), z(t)-v(t))_{H} + \langle Az(t) - f(t), z(t)-v(t) \rangle
\end{align*}
so that $z \in \mathbf{P}(f) \cap W_s(0,T)$. Since $\{z_n^N\}$ are non-negative, it follows that $z$ is too.

By \eqref{eq:listOfConvergences}, it follows that $z^{N_j}(t) \to z(t)$ in $H$ for almost every $t \in [0,T]$. Let now $s \leq r$ and suppose that $s \in T_{m-1}^{N_j}$ and $r \in T_{n-1}^{N_j}$ with $m \leq n$. Then we have $z^{N_j}(s) = z^{N_j}_m \leq z^{N_j}_n = z^{N_j}(t)$ since the sequence $\{z^{N_j}_i\}_{i \in \mathbb{N}}$ is increasing. Passing to the limit, we find for almost every $r$ and $s$ with $s \leq r$ that $z(s) \leq z(r)$, i.e., $t \mapsto z(t)$ is increasing.
\end{proof}

\section{Parabolic VI iterations of parabolic QVIs}\label{sec:approximations}
In this section, we will show the existence of sequences of solutions to VIs that converge to solutions of QVIs. We begin with collecting some facts regarding parabolic VIs.

Consider the parabolic VI
\begin{equation}\label{eq:Pvi}
\begin{aligned}
z(t) \leq \psi(t) : \int_0^T \langle z'(t) + Az(t) - f(t), z(t) - v(t) \rangle &\leq 0 \quad \forall v \in L^2(0,T;V) \text{ s.t. } v(t) \leq \psi(t),\\
z(0) &= z_0.
\end{aligned}
\end{equation}
We write the solution as $z:=\sigma_{z_0}(f, \psi)$ when it exists. Given $f \in L^2(0,T;H)$ and $\psi \in V$ independent of time, the solution $z \in W_s(0,T)$ exists uniquely, see e.g. \cite{MR742624, LionsBensoussan, Bensoussan1974}.

The problem \eqref{eq:Pvi} can be transformed to a parabolic VI with zero initial data with right-hand side $f-Az_0$ and obstacle $\psi-z_0$ with the substitution $u(t)=z(t)-z_0$:
\[\sigma_0(f-Az_0, \psi-z_0) = \sigma_{z_0}(f,\psi) - z_0.\] 
We often write simply $\sigma$ rather than $\sigma_{z_0}$ when we do not need to emphasise the initial data. The next lemma shows that $\sigma$ is increasing in its arguments.
\begin{lem}[I. Comparison principle for parabolic VIs]\label{lem:comparisonPrincipleNoPhi}
Suppose for $i=1, 2$ that $z_i \in W(0,T)$ is a solution of \eqref{eq:Pvi} with data $f_i \in L^2(0,T;V^*)$ and obstacle $\psi_i$ such that $f_1 \leq f_2$ and $\psi_1 \leq \psi_2$. Then $z_1 \leq z_2$. 
\end{lem}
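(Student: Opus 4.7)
The plan is to use the classical Stampacchia test-function construction: feed the pointwise min and max of $z_1,z_2$ into the two variational inequalities so that subtracting the two inequalities isolates $w^+ := (z_1-z_2)^+$. Concretely, I would fix $t\in(0,T]$ and set
\[
v_1(s) := z_1(s) - \chi_{(0,t)}(s)\,w^+(s),\qquad v_2(s) := z_2(s) + \chi_{(0,t)}(s)\,w^+(s),
\]
i.e., on $(0,t)$ these are $z_1\wedge z_2$ and $z_1\vee z_2$, and equal to $z_1$, $z_2$ on $(t,T)$. Feasibility for the first is immediate since $v_1\le z_1\le\psi_1$; for the second, on $\{z_1\le z_2\}$ one has $v_2=z_2\le\psi_2$, while on $\{z_1>z_2\}$ one has $v_2=z_1\le\psi_1\le\psi_2$, using the hypothesis $\psi_1\le\psi_2$. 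The assumption that $v\in V$ implies $v^+\in V$ with a continuous bound guarantees $v_i\in L^2(0,T;V)$.

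Plugging into \eqref{eq:Pvi} for $i=1,2$, the integrands vanish on $(t,T)$ by construction, so after subtraction with $w:=z_1-z_2$ one obtains
\[
\int_0^t \langle w'(s), w^+(s)\rangle\,\mathrm{d}s + \int_0^t \langle Aw(s), w^+(s)\rangle\,\mathrm{d}s \;\le\; \int_0^t \langle f_1(s)-f_2(s), w^+(s)\rangle\,\mathrm{d}s.
\]
The right-hand side is $\le 0$ because $f_2-f_1\in L^2(0,T;V^*_+)$ pairs non-negatively with $w^+\in L^2(0,T;V_+)$. For the time-derivative term, the chain rule in $W(0,T)$ gives
\[
\int_0^t \langle w', w^+\rangle = \tfrac12\|w^+(t)\|_H^2 - \tfrac12\|w^+(0)\|_H^2,
\]
and assuming both solutions share the initial datum $z_0$ (as is implicit in the notation $\sigma_{z_0}$), $w^+(0)=0$.

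The only step that genuinely uses the structural assumption on $A$ is the treatment of $\langle Aw, w^+\rangle$: write $w=w^+-w^-$, so $\langle Aw, w^+\rangle = \langle Aw^+, w^+\rangle - \langle Aw^-, w^+\rangle$. Coercivity makes the first term $\ge 0$, and applying T-monotonicity to $-w$ (whose positive part is $w^-$ and negative part is $w^+$) yields $\langle Aw^-, w^+\rangle\le 0$, so the second term also contributes non-negatively. Combining, $\tfrac12\|w^+(t)\|_H^2\le 0$ for every $t\in(0,T]$, hence $w^+\equiv 0$, i.e., $z_1\le z_2$. I expect the main obstacle to be purely bookkeeping — namely verifying feasibility of $v_2$ with respect to $\psi_2$ across the set $\{z_1>z_2\}$, where one needs $\psi_1\le\psi_2$ in an essential way — since the rest of the argument is a routine energy estimate once the correct test functions are in hand.
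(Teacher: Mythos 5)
Your proposal is correct and follows essentially the same route as the paper: both use the test functions $z_1-(z_1-z_2)^+$ and $z_2+(z_1-z_2)^+$ (with the identical case analysis for feasibility of the second via $\psi_1\le\psi_2$), combine the two inequalities, and dispose of the elliptic term by T-monotonicity and coercivity. The only cosmetic difference is that you localise to $(0,t)$ and spell out the chain rule and the splitting $\langle Aw,w^+\rangle=\langle Aw^+,w^+\rangle-\langle Aw^-,w^+\rangle$, whereas the paper integrates over all of $[0,T]$ and leaves these routine steps implicit.
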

\begin{proof}
The $z_i$ satisfy the inequalities
\begin{equation*}
\begin{aligned}
&z_1(t) \leq \psi_1(t) : \int_0^T \langle z_1'(t) + Az_1(t) - f_1(t), z_1(t) - v_1(t) \rangle \leq 0 ,\\
&z_2(t) \leq \psi_2(t) : \int_0^T \langle z_2'(t) + Az_2(t) - f_2(t), z_2(t) - v_2(t) \rangle \leq 0,
\end{aligned}
\end{equation*}
for all $v_1, v_2 \in L^2(0,T;V)$ such that $v_1(t) \leq \psi_1(t)$ and $v_2(t) \leq \psi_2(t)$. Let us take here $v_1 = z_1 - (z_1-z_2)^+$, which is clearly a feasible test function, and take $v_2= z_2 + (z_1-z_2)^+$ which is also feasible since 
\begin{align*}
v_2 \leq \begin{cases}
z_2 \leq \psi_2 &: \text{if $z_1 \leq z_2$},\\
z_1 \leq \psi_1 \leq \psi_2 &: \text{if $z_1 \geq z_2$}.
\end{cases}
\end{align*}
This gives us
\begin{align*}
\int_0^T \langle z_1'(t) + Az_1(t) - f_1(t), (z_1(t) - z_2(t))^+ \rangle &\leq 0,\\
\int_0^T \langle z_2'(t) + Az_2(t) - f_2(t), -(z_1(t) - z_2(t))^+ \rangle &\leq 0 .
\end{align*}
Adding yields
\[\int_0^T \langle (z_1-z_2)'(t) + Az_1(t)-Az_2(t), (z_1(t) - z_2(t))^+ \rangle \leq \int \langle f_1(t)-f_2(t) , (z_1(t)-z_2(t))^+ \rangle \leq 0\]
whence using T-monotonicity and coercivity, we obtain $z_1 \leq z_2$.
\end{proof}
We start by giving some existence results for  \eqref{eq:Pvi} now in the general case when $\psi$ is not necessarily independent of time. The first one is a result of applying Theorem \ref{thm:discretisationExistence} of \S \ref{sec:discretisation} using the obstacle mapping $\Phi(v)(t) \equiv \psi(t)$ (it can be seen that all of the assumptions of the theorem are satisfied, refer also to the remarks below the proof of Lemma \ref{lem:recoverySequence}).
\begin{prop}[Existence via time discretisation]\label{prop:existenceUsingDiscretisation}Let $\psi \in C^0([0,T];V)$ be non-negative with $t \mapsto \psi(t)$ increasing and let \eqref{ass:dataNonnegativeAndfInc} and
\[z_0 \leq \psi(0) \text{ and } \langle Az_0 - f(t), v\rangle \leq 0\text{ for all $v \in V_+$ and a.e. $t$}\]
hold. Then \eqref{eq:Pvi} has a unique non-negative solution $z=\sigma(f,\psi) \in W_s(0,T)$ which is increasing in time.
\end{prop}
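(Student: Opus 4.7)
The plan is to regard this as a special case of Theorem \ref{thm:discretisationExistence} by taking $\Phi$ to be constant in its $v$-argument, $\Phi(v)(t) := \psi(t)$, and then derive uniqueness from the comparison principle of Lemma \ref{lem:comparisonPrincipleNoPhi}. All the work is in verifying that each hypothesis of the time-discretisation theorem reduces to either a stated hypothesis of the proposition or a trivial consequence of $\Phi$ being constant in $v$.

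More precisely, I would check the list in order. Assumption \ref{ass:OnPhiForTimeDisc} holds because $\Phi(0) = \psi \geq 0$ and $\psi \in C^0([0,T];V)$ by hypothesis, so indeed $\Phi\colon H \to C^0([0,T];V)$. Condition \eqref{ass:dataNonnegativeAndfInc} is assumed. Condition \eqref{ass:initialDataNewAss} asks for $z_0 \leq \Phi(z_0)(0) = \psi(0)$ and the compatibility of the source; both are part of the hypotheses. Monotonicity in the $v$-argument is immediate since $\Phi$ does not depend on $v$, and \eqref{eq:PhiIncreasingInTimeForNonNegV} reduces to $t \mapsto \psi(t)$ being increasing. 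For \eqref{ass:forFeasibility}, both sides equal $\psi(t)$ for $t \in T_{n-1}^N$, so equality holds. For \eqref{ass:ensuresFeasibility2}, the condition $v_n(t) \leq \psi(t)$ passes to the weak limit because the set $\{u \in L^2(0,T;V): u(t) \leq \psi(t)\text{ a.e.}\}$ is convex and closed, hence weakly closed. Finally, \eqref{ass:CA} is precisely the situation discussed in the ``VI case'' paragraph just after the proof of Lemma \ref{lem:recoverySequence}: because $\psi$ is uniformly continuous from $[0,T]$ into $V$, for any $\varepsilon > 0$ the sum in \eqref{ass:CA} is bounded by $\varepsilon$ once $h^N$ is smaller than the modulus of continuity at $\sqrt{\varepsilon/T}$.

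With all hypotheses verified, Theorem \ref{thm:discretisationExistence} produces a non-negative $z \in W_s(0,T)$ solving \eqref{eq:Pvi} with $t \mapsto z(t)$ increasing. For uniqueness, suppose $z_1, z_2 \in W_s(0,T) \subset W(0,T)$ both solve \eqref{eq:Pvi} with data $(f,\psi)$. Lemma \ref{lem:comparisonPrincipleNoPhi} applied with $f_1 = f_2 = f$ and $\psi_1 = \psi_2 = \psi$ gives $z_1 \leq z_2$, and swapping the roles yields $z_2 \leq z_1$, hence $z_1 = z_2$.

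There is no real obstacle here; the proposition is essentially a corollary of the main existence theorem of the section. The only mildly delicate point is recognising that the nontrivial-looking assumption \eqref{ass:CA} collapses to a uniform continuity argument in the present VI setting, which has already been carried out in the excerpt and so only needs to be invoked.
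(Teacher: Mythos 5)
Your proposal is correct and follows exactly the route the paper intends: the paper itself introduces Proposition \ref{prop:existenceUsingDiscretisation} as "a result of applying Theorem \ref{thm:discretisationExistence} \dots using the obstacle mapping $\Phi(v)(t)\equiv\psi(t)$", deferring the verification of \eqref{ass:CA} to the ``VI case'' discussion after Lemma \ref{lem:recoverySequence}, and uniqueness to the comparison principle of Lemma \ref{lem:comparisonPrincipleNoPhi}. Your write-up simply makes these verifications explicit, and they all check out.
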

The next proposition applies a result due to Brezis--Stampacchia (see \cite[\S 2.9.6.1, p.~286]{Lions1969}) to obtain existence of a very weak solution and then a further argument is required to obtain additional regularity.
\begin{prop}[Existence II]\label{prop:existenceUsingBrezis}Let $\psi \in W_{\mathrm r}(0,T)$ be such that $t \mapsto \psi(t)$ is increasing with $z_0 \leq \psi(0)$. Then \eqref{eq:Pvi} has a unique solution $z=\sigma(f,\psi) \in W_{\mathrm r}(0,T)$.
\end{prop}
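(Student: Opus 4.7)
My plan is to first invoke the Brezis--Stampacchia framework to produce a very weak solution, then to upgrade its regularity by a change of variables that reduces the problem to a parabolic VI with a time-\emph{independent} one-sided constraint, for which the classical strong-solution theory of Brezis applies. Uniqueness will then follow from the comparison principle (Lemma~\ref{lem:comparisonPrincipleNoPhi}) applied to two solutions sharing identical data.

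For the very weak solution $z \in L^2(0,T;V)\cap L^\infty(0,T;H)$ satisfying the test-on-the-derivative formulation of \eqref{eq:Pvi} analogous to \eqref{eq:qviWeak}, I would directly invoke \cite[\S 2.9.6.1, p.~286]{Lions1969}; the hypotheses $\psi \in W_{\mathrm r}(0,T)$, $t\mapsto \psi(t)$ increasing, $z_0 \leq \psi(0)$ and $f \in L^2(0,T;H)$ match that framework, the monotonicity of $\psi$ supplying the feasible comparison element needed in the construction.

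To upgrade this to $z \in W_{\mathrm r}(0,T)$, I would translate by the obstacle. Setting $w := \psi - z$ and $u := \psi - v$, so that $w(t) \geq 0$ and $v \leq \psi$ corresponds to $u \geq 0$, the inequality \eqref{eq:Pvi} transforms into
\begin{equation*}
w(t) \geq 0 : \int_0^T \langle w'(t) + Aw(t) - g(t), w(t) - u(t)\rangle \leq 0 \quad \forall u \in L^2(0,T;V),\ u(t) \geq 0,
\end{equation*}
with forcing $g := L\psi - f \in L^2(0,T;H)$ (using $\psi \in W_{\mathrm r}(0,T)$) and initial datum $w(0) = \psi(0) - z_0 \in V_+$. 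The constraint set is now the fixed closed convex cone $\{v \in V : v \geq 0\}$, so the classical strong-solution theorem of Brezis for parabolic VIs with time-independent constraint and $L^2(0,T;H)$ forcing delivers $w \in W_{\mathrm r}(0,T)$. Undoing the substitution yields $z = \psi - w \in W_{\mathrm r}(0,T)$, and Lemma~\ref{lem:comparisonPrincipleNoPhi} applied twice (with roles of the two solutions swapped) gives uniqueness.

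The main obstacles I anticipate are twofold. First, one must verify that the $z$ produced via the translation coincides with the very weak solution obtained from \cite{Lions1969}; equivalently, one must argue that the translated fixed-obstacle problem admits at most one very weak nonnegative solution, which I would handle by the standard comparison argument at the very weak level (testing with $\pm(w_1-w_2)^+$ after an integration by parts justified by time-regularisation). Second, in order to feed Brezis' strong-solution theorem one needs $w(0) \in V_+$, hence $\psi(0)\in V$; this trace statement amounts to the embedding $W_{\mathrm r}(0,T) \hookrightarrow C^0([0,T];V)$, which I would obtain by pairing the identity $L\psi = \psi' + A\psi$ with $\psi'$ and exploiting the symmetry and coercivity of $A$ to extract $\psi \in L^\infty(0,T;V)$ together with $\psi' \in L^2(0,T;H)$.
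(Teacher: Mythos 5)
Your route is genuinely different from the paper's. You translate by the obstacle, $w:=\psi-z$, to reduce \eqref{eq:Pvi} to a VI over the fixed cone $V_+$ with forcing $L\psi-f\in L^2(0,T;H)$, and then invoke the classical constant-constraint regularity theory; this is essentially the content of the paper's later Proposition \ref{prop:conditionsForFormulaToHold}, which, however, is derived \emph{from} the present existence result rather than used to prove it. The paper instead keeps the moving obstacle: it gets a very weak solution from Showalter's Theorem III.7.1 (this is where the increasing property of $t\mapsto\psi(t)$ and $z_0\le\psi(0)$ enter), and upgrades it to a strong solution in $W_{\mathrm r}(0,T)$ via Showalter's Proposition III.7.2, whose density hypothesis is verified by constructing, for each feasible $v$, regular feasible approximants $v_\epsilon$ solving $\epsilon v_\epsilon'+\epsilon Av_\epsilon+v_\epsilon=v+\epsilon L\psi$, $v_\epsilon(0)=z_0$, and checking $v_\epsilon\le\psi$ by testing with $(v_\epsilon-\psi)^+$. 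Your approach, if completed, would be cleaner and would not even need the monotonicity of $\psi$; the first step of your plan (producing a very weak solution via \cite{Lions1969}) then becomes redundant, since the translated problem already yields a strong solution directly.

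There is, however, a concrete gap in the step you flag yourself: the claimed embedding $W_{\mathrm r}(0,T)\hookrightarrow C^0([0,T];V)$ is false. Take $\psi_0\in H\setminus V$ and let $\psi$ solve $\psi'+A\psi=0$ with $\psi(0)=\psi_0$; then $\psi\in L^2(0,T;V)$ and $L\psi=0\in L^2(0,T;H)$, so $\psi\in W_{\mathrm r}(0,T)$, yet $\psi(0)\notin V$. Your proposed proof (pairing $L\psi=\psi'+A\psi$ with $\psi'$) cannot be made rigorous precisely because the resulting a priori estimate requires $\|\psi(0)\|_V$ on the right-hand side — this is the usual parabolic smoothing obstruction at $t=0$. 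Hence you only know $w_0=\psi(0)-z_0\in H_+$, not $V_+$, and the version of Brezis' theorem you appeal to is not applicable as stated. The repair is either to add the hypothesis $\psi(0)\in V$ (not assumed in the proposition) or to use a variant of the constant-cone result requiring only $w_0\in H_+$ for the conclusion $Lw\in L^2(0,T;H)$: in the penalisation $w_\epsilon'+Aw_\epsilon-\epsilon^{-1}w_\epsilon^-=g$, testing with $-\epsilon^{-1}w_\epsilon^-$ and using $w_\epsilon^-(0)=0$ gives $\epsilon^{-1}\|w_\epsilon^-\|_{L^2(0,T;H)}\le\|g\|_{L^2(0,T;H)}$, so the penalty term, and hence $Lw$, stays in $L^2(0,T;H)$ in the limit. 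With that repair (and your comparison-principle uniqueness, which is fine), your argument closes.
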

\begin{proof}
First observe that since $z_0 \leq \psi(0)$ and $t \mapsto \psi(t)$ is increasing, Theorem 7.1 of \cite[\S III]{Showalter} gives the existence of a weak solution $z \in L^2(0,T;V) \cap L^\infty(0,T;H)$ (see the introduction for the definition):
\[z(t) \leq \psi(t) : \int_0^T \langle v'(t) + Az(t)-f(t), z(t)-v(t) \rangle \leq 0 \quad \forall v \in W(0,T) : v \leq \psi(t), v(0) = z_0.\]
Indeed, if for simplicity we take $z_0 \equiv 0$, the domain of $L$ is $D(L):=\{ v \in H^1(0,T;V^*) : v(0) = 0\}$ and the condition (7.5) of \cite[\S III.7]{Showalter} follows since $\psi$ is increasing in time (see also \cite[p.~150]{Showalter}) and condition (7.7) of \cite[\S III.7]{Showalter} holds for the function $v_0 := \psi$. Hence the aforementioned theorem is applicable.

Now, given $v \in L^2(0,T;V)$ with $v(t) \leq \psi(t)$, consider the PDE
\begin{align*}
\epsilon v_\epsilon' + \epsilon Av_\epsilon + v_\epsilon &= v + \epsilon {L}\psi\,\
v_\epsilon(0) &= z_0,
\end{align*}
which, by standard parabolic theory, has a strong solution $v_\epsilon \in W_s(0,T)$ thanks to the regularity on $\psi$. A rearrangement and adding and subtracting the same term leads to
\begin{align*}
\epsilon v_\epsilon' + \epsilon Av_\epsilon -\epsilon {L}\psi + v_\epsilon-\psi &= v - \psi,\\
(v_\epsilon-\psi)(0) &= z_0 - \psi(0).
\end{align*}
Testing the equation above with $(v_\epsilon-\psi)^+$ and using the non-negativity of the right-hand side,
\begin{align*}
\int_0^T \langle {L}(v_\epsilon-\psi), (v_\epsilon-\psi)^+\rangle &\geq \frac 12 \norm{(v_\epsilon(T)-\psi(T))^+}{H}^2 - \frac 12 \norm{(z_0 - \psi(0))^+}{H}^2\\
&\quad + C_a\int_0^T \norm{(v_\epsilon(t)-\psi(t))^+}{V}^2 
\end{align*}
we find
\[\int_0^T \norm{(v_\epsilon(t)-\psi(t))^+}{H}^2 \leq \int_0^T (v(t)-\psi(t), (v_\epsilon(t)-\psi(t))^+)_H \leq 0\]
which implies that $v_\epsilon(t) \leq \psi(t)$. Then \cite[\S III, Proposition 7.2]{Showalter} implies that the solution is actually strong, i.e., \eqref{eq:Pvi} holds and it belongs to $W(0,T)$ with the additional regularity ${L}z \in L^2(0,T;H)$, i.e., $z \in W_{\mathrm r}(0,T)$. 
\end{proof}
Some related regularity results given sufficient smoothness for $f$ can be found in e.g. \cite[Theorem 2.1]{MR653144}. 

\subsection{Parabolic VIs with obstacle mapping}
Take $\Phi$ as in the introduction and fix $\psi \in L^2(0,T;H)$. Define the map 
\[S_{z_0}(f,\psi) := \sigma_{z_0}(f,\Phi(\psi)),\] that is, $z=S_{z_0}(f,\psi)$ solves 
\begin{equation}\label{eq:parabolicVIObstacleMapping}
\begin{aligned}
z(t) \leq \Phi(\psi)(t) : \int_0^T \langle z'(t) + Az(t) - f(t), z(t) - v(t) \rangle &\leq 0 \\
&\!\!\!\!\!\!\!\!\!\!\!\!\!\!\!\!\!\!\!\!\!\!\!\!  \forall v \in L^2(0,T;V) : v(t) \leq \Phi(\psi)(t),\\
z(0) &= z_0.
\end{aligned}
\end{equation}
We will often omit the subscript $z_0$ in $S_{z_0}(f,\psi)$ and simply write $S(f,\psi)$.
Let us translate the content of Propositions \ref{prop:existenceUsingDiscretisation} and \ref{prop:existenceUsingBrezis} to this setting. 
\begin{prop}[Existence for \eqref{eq:parabolicVIObstacleMapping}]\label{prop:existenceForParabolicVIObstacle}
Let
\begin{align}
&t \mapsto \Phi(\psi)(t) \text{ be increasing}\label{ass:PhiAtPsiIsIncreasingInTime},\\
&z_0 \leq \Phi(\psi)(0)\label{ass:InitialDataBoundedAboveByObstacleAtZero},
\end{align}
and either
\begin{empheq}[left={\empheqlbrace}]{align}
\nonumber &\eqref{ass:dataNonnegativeAndfInc},\\
&\langle Az_0 - f(t), v\rangle \leq 0\text{ for all $v \in V_+$ and a.e. $t$}\label{ass:NEWASSPHIPSI},\\
&\text{$\Phi(\psi) \in C^0([0,T];V)$ and $\Phi(\psi) \geq 0$}\label{ass:PhiIsNonNegativeAtPsi},
\end{empheq}
or
\begin{align}
&\text{$\Phi(\psi) \in W_{\mathrm r}(0,T)$.}\label{ass:existenceForPVIOBstacleViaBrezis}
\end{align}
Then \eqref{eq:parabolicVIObstacleMapping} has a solution $z=S(f, \psi)\in W(0,T)$ with the regularity that, in the first case, $z \in W_s(0,T)$ is non-negative and $t \mapsto z(t)$ is increasing, whereas in the second case, $z \in W_{\mathrm r}(0,T).$
\end{prop}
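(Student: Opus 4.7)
The plan is essentially to reduce Proposition \ref{prop:existenceForParabolicVIObstacle} to the two already-proved existence results for parabolic VIs with a prescribed (not mapping-generated) obstacle, namely Propositions \ref{prop:existenceUsingDiscretisation} and \ref{prop:existenceUsingBrezis}. Since $S_{z_0}(f,\psi) := \sigma_{z_0}(f,\Phi(\psi))$ by definition, the task amounts to checking that, with $\psi$ fixed, the obstacle $\Phi(\psi)$ satisfies the hypotheses required by one of those two propositions.

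For the first case, I would simply read off the assumptions: \eqref{ass:PhiAtPsiIsIncreasingInTime} gives the increasing-in-time property of the obstacle $\Phi(\psi)$, \eqref{ass:InitialDataBoundedAboveByObstacleAtZero} gives the compatibility $z_0 \leq \Phi(\psi)(0)$, \eqref{ass:PhiIsNonNegativeAtPsi} gives the required continuity and non-negativity of $\Phi(\psi)$, and \eqref{ass:dataNonnegativeAndfInc}, \eqref{ass:NEWASSPHIPSI} are precisely the data conditions required by Proposition \ref{prop:existenceUsingDiscretisation}. Applying that proposition with the (time-dependent) obstacle $\Phi(\psi)$ in place of $\psi$ then yields a unique non-negative solution $z = \sigma_{z_0}(f,\Phi(\psi)) = S(f,\psi) \in W_s(0,T)$ with $t \mapsto z(t)$ increasing.

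For the second case, the assumption \eqref{ass:existenceForPVIOBstacleViaBrezis} gives $\Phi(\psi) \in W_{\mathrm r}(0,T)$, and together with \eqref{ass:PhiAtPsiIsIncreasingInTime} and \eqref{ass:InitialDataBoundedAboveByObstacleAtZero}, all hypotheses of Proposition \ref{prop:existenceUsingBrezis} are verified with obstacle $\Phi(\psi)$. That proposition then delivers a unique solution $z = \sigma_{z_0}(f,\Phi(\psi)) \in W_{\mathrm r}(0,T)$, which is by definition $S(f,\psi)$.

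There is essentially no obstacle: both invocations are direct substitutions and the only thing to note is that the two cases correspond to disjoint structural assumptions on $\Phi(\psi)$ (either continuity plus positivity and sign conditions on the data, or the stronger regularity $\Phi(\psi) \in W_{\mathrm r}(0,T)$), which accounts for the two different regularity conclusions on $z$. The proof will therefore be a two-sentence remark citing Propositions \ref{prop:existenceUsingDiscretisation} and \ref{prop:existenceUsingBrezis} with the obstacle taken as $\Phi(\psi)$, rather than a new argument.
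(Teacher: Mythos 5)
Your proposal is correct and coincides with the paper's own proof, which consists of exactly this observation: the first set of assumptions verifies the hypotheses of Proposition \ref{prop:existenceUsingDiscretisation} for the obstacle $\Phi(\psi)$, and the second set verifies those of Proposition \ref{prop:existenceUsingBrezis}. Nothing further is needed.
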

\begin{proof}
The first set of assumptions imply that the hypotheses of Proposition  \ref{prop:existenceUsingDiscretisation} hold for the obstacle $\Phi(\psi)$, whilst under the second set of assumptions, we apply Proposition \ref{prop:existenceUsingBrezis}.
\end{proof}
\begin{remark}
In this section, it suffices for $\Phi$ to be defined on $L^2(0,T;V)$ rather than $L^2(0,T;H)$ since that assumption was necessarly only to apply the Birkhoff--Tartar theory in the previous section. 
\end{remark}
The next lemma states that the solution mapping $S(f,\psi)=z$ is increasing with respect to the arguments. This follows simply by using Lemma \ref{lem:comparisonPrincipleNoPhi} and the fact that $\Phi$ is increasing.
\begin{lem}[II. Comparison principle for parabolic VIs]\label{lem:comparisonPrinciple}
Suppose for $i=1, 2$ that $z_i \in W(0,T)$ is a solution of \eqref{eq:parabolicVIObstacleMapping} with data $f_i \in L^2(0,T;V^*)$ and obstacle $\psi_i$ such that $f_1 \leq f_2$ and $\psi_1 \leq \psi_2$. Then $z_1 \leq z_2$. 
\end{lem}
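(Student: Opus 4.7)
The proof is essentially a one-liner once we recognize that $S$ is just $\sigma$ precomposed with $\Phi$ in the obstacle argument. The plan is to reduce Lemma \ref{lem:comparisonPrinciple} to Lemma \ref{lem:comparisonPrincipleNoPhi} by using monotonicity of $\Phi$ to convert the hypothesis $\psi_1 \le \psi_2$ into an ordering of the effective obstacles $\Phi(\psi_1) \le \Phi(\psi_2)$.

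More concretely, first I would unfold the definition: $z_i = S(f_i,\psi_i) = \sigma(f_i, \Phi(\psi_i))$, so each $z_i$ solves the parabolic VI \eqref{eq:Pvi} with source $f_i$ and (time-dependent) obstacle $\Phi(\psi_i)$. Next, invoke the standing monotonicity assumption on $\Phi$ stated in the introduction, namely that $\psi_1 \le \psi_2$ implies $\Phi(\psi_1) \le \Phi(\psi_2)$ in $L^2(0,T;V)$, and hence pointwise a.e.\ in $t$ through the induced ordering on Bochner spaces. Combined with the assumed ordering $f_1 \le f_2$, the hypotheses of Lemma \ref{lem:comparisonPrincipleNoPhi} are satisfied for the pair $(z_1,z_2)$ with data $(f_1,\Phi(\psi_1))$ and $(f_2,\Phi(\psi_2))$, and that lemma immediately gives $z_1 \le z_2$.

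There is no real obstacle here: all the hard work — the feasibility of the test functions $v_1 = z_1 - (z_1-z_2)^+$ and $v_2 = z_2 + (z_1-z_2)^+$, the T-monotonicity and coercivity argument, and the disposal of the time derivative term — was already carried out in the proof of Lemma \ref{lem:comparisonPrincipleNoPhi}. The only conceptual point worth stating explicitly is that monotonicity of $\Phi$ is precisely what is needed to promote the comparison principle for VIs (with prescribed obstacle) to one for VIs whose obstacle is generated by a mapping, and this is the sole ingredient beyond what Lemma \ref{lem:comparisonPrincipleNoPhi} already supplies.
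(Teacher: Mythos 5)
Your proposal is correct and matches the paper exactly: the paper also dispatches this lemma by noting that $z_i = \sigma(f_i,\Phi(\psi_i))$, that the monotonicity of $\Phi$ converts $\psi_1 \leq \psi_2$ into $\Phi(\psi_1) \leq \Phi(\psi_2)$, and then applying Lemma \ref{lem:comparisonPrincipleNoPhi}. No gaps.
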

\subsection{Iteration scheme to approximate a solution of the parabolic QVI}
We say that a function $z_{\mathrm{sub}} \in L^2(0,T;H)$ is a \textit{subsolution} for \eqref{eq:qvi} if $z_{\mathrm{sub}} \leq S(f, z_{\mathrm{sub}})$, and a \textit{supersolution} is defined with the opposite inequality.
\begin{remark}Let $\Phi(0) \geq 0$. If $f \geq 0$, the function $0$ is a subsolution, and the function $\bar z$ defined by
\begin{align*}
\bar z' + A\bar z -f &= 0\\
\bar z(0) &= z_0
\end{align*}
is a supersolution to \eqref{eq:qvi}. Both claims follow by the comparison principle: the first claim is clear and the second follows upon realising that $\bar z = S(f, \infty) \geq S(f,\bar z)$. We need the sign condition on $f$ for $z_{\mathrm{sub}}=0 \leq z^{sup}=\bar z$.
\end{remark}
\begin{lem}\label{lem:subsolutionForPerturbedQVI}If $d \geq 0$, any subsolution for $\mathbf{P}(f)$ is a subsolution for $\mathbf{P}(f+sd)$ where $s \geq 0.$
\end{lem}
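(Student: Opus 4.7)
The plan is to reduce this directly to the second comparison principle for parabolic VIs (Lemma \ref{lem:comparisonPrinciple}). By definition, a subsolution $z_{\mathrm{sub}}$ for $\mathbf{P}(f)$ satisfies
\[z_{\mathrm{sub}} \leq S(f,z_{\mathrm{sub}}) = \sigma_{z_0}(f, \Phi(z_{\mathrm{sub}})),\]
and we want to conclude the analogous statement $z_{\mathrm{sub}} \leq S(f+sd, z_{\mathrm{sub}})$.

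First I would observe that the two problems defining $S(f, z_{\mathrm{sub}})$ and $S(f+sd, z_{\mathrm{sub}})$ have \emph{identical} obstacles, namely $\Phi(z_{\mathrm{sub}})$, and differ only in the source terms. Since $d \geq 0$ and $s \geq 0$, we have $f \leq f+sd$ pointwise in the ordering induced by $H_+$. Lemma \ref{lem:comparisonPrinciple} (comparison with $f_1 = f$, $f_2 = f+sd$ and $\psi_1 = \psi_2 = z_{\mathrm{sub}}$, which trivially satisfies $\psi_1 \leq \psi_2$) then yields
\[S(f, z_{\mathrm{sub}}) \leq S(f+sd, z_{\mathrm{sub}}).\]
Chaining with the subsolution inequality gives $z_{\mathrm{sub}} \leq S(f+sd, z_{\mathrm{sub}})$, which is precisely the statement that $z_{\mathrm{sub}}$ is a subsolution for $\mathbf{P}(f+sd)$.

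There is essentially no obstacle here beyond checking hypotheses; the only subtlety is ensuring that $S(f+sd, z_{\mathrm{sub}})$ is actually well defined so that Lemma \ref{lem:comparisonPrinciple} applies. This is implicit since we are invoking the comparison principle in a setting where both VI solutions exist (either by Proposition \ref{prop:existenceForParabolicVIObstacle} or already by assumption, since $z_{\mathrm{sub}}$ being a subsolution presupposes that $S(f,z_{\mathrm{sub}})$ exists; a parallel hypothesis for $f+sd$ is implicitly required and is in any case inherited from the standing assumptions on $f$ and $\Phi$). The argument is thus essentially immediate from monotonicity of $\sigma$ in its first argument.
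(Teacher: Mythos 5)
Your proof is correct and is essentially the paper's own argument: the paper's proof is the same one-line chain $w \leq S(f,w) \leq S(f+sd,w)$, with the second inequality resting on the comparison principle of Lemma \ref{lem:comparisonPrinciple} exactly as you invoke it. Your additional remark about well-definedness of $S(f+sd,z_{\mathrm{sub}})$ is a reasonable bit of care that the paper leaves implicit.
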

\begin{proof}
This is obvious: if $w$ is a subsolution for $\mathbf{P}(f)$, then $w \leq S(f,w) \leq S(f+sd, w)$.
\end{proof}
The previous lemma tells us in particular that any element of $\mathbf{P}(f)$ is a subsolution for $\mathbf{P}(f+sd)$. The next theorem, which shows that a solution of the QVI can be approximated by solutions of VIs defined iteratively with respect to the obstacle, is based on an iteration idea of Bensoussan and Lions in \cite[Chapter 5.1]{LionsBensoussan} (there, the authors consider $\Phi$ to be of impulse control type). The theorem and its sister result Theorem \ref{thm:approximationOfQVIByVIIteratesDec} show in particular that the approximating sequences converge to extremal (the smallest or largest) solutions of the QVI in certain intervals.
\begin{theorem}[Increasing approximation of the minimal solution of QVI by solutions of VIs]\label{thm:approximationOfQVIByVIIterates}
Let $z_{\mathrm{sub}} \in L^2(0,T;V)$ be a subsolution for $\mathbf{P}(f)$ such that $\Phi(z_{sub
}) \geq 0$ and
\begin{equation}
z_0 \leq \Phi(z_{sub})(0) \label{ass:NEWASS}.
\end{equation}
Let either 
\begin{empheq}[left={\empheqlbrace}]{align}
&t \mapsto \Phi(z_{\mathrm{sub}})(t) \text{ is increasing}\label{ass:newIncreasingPropertyOfSub},\\
&\Phi(z_{\mathrm{sub}}) \in C^0([0,T];V)\label{ass:PhiAtSubsolnIsCts},\\
&\text{\eqref{ass:dataNonnegativeAndfInc}},\nonumber \\
&\eqref{ass:NEWASSPHIPSI},\nonumber\\
&\Phi(0) \geq 0,\label{ass:PhiNonNegativeForNonNegativeObs}\tag{O1a}\\
&\nonumber \text{$\forall \psi \in L^2(0,T;V), t \mapsto \psi(t)$ is increasing $\implies$}\\
& \qquad\qquad   \qquad \qquad \qquad \qquad \qquad \qquad  \text{$t \mapsto \Phi(\psi)(t)$ is increasing},\label{ass:newIncreasingProperty2}\\
&\Phi\colon W_s(0,T) \to C^0([0,T];V),\label{ass:PhiTakesWsIntoCts}\\
&\nonumber\text{$w_n \weaklyto w$ in $L^2(0,T;V)$ and weakly-* in $L^\infty(0,T;H) \implies$}\\
&\qquad\qquad   \qquad \qquad \qquad \qquad \qquad \text{$\Phi(w_n) \to \Phi(w)$ in $L^2(0,T;V)$,}
\tag{O2a}\label{ass:completeContinuityOfPhi}
\end{empheq}
or
\begin{empheq}[left={\empheqlbrace}]{align}
&\forall \psi \in L^2(0,T;V) : \psi \geq z_{\mathrm{sub}}, t \mapsto \Phi(\psi)(t) \text{ is increasing,}\label{ass:PhiIsIncreasingForAllFnsBiggerThanSubsoln}\\
&\Phi(z_{\mathrm{sub}}) \in W_{\mathrm r}(0,T),\label{ass:PhizSubInW}\\
&\Phi(\psi) \geq 0\quad \forall \psi \in L^2(0,T;V), \label{ass:PhiNonNegativeForAllPsiInBochnerSpace}\tag{O1b}\\
&\Phi\colon W_{\mathrm r}(0,T) \to W_{\mathrm r}(0,T),\tag{O2b}\label{ass:T2}\\
&\nonumber w \in W_{\mathrm r}(0,T) : w(0) = z_0 \implies  z_0 \leq  \Phi(w)(0)\text{, or }  \Phi(v) \leq \Phi(w) \implies \\ &\qquad\qquad\qquad\qquad\qquad\qquad\qquad\qquad\qquad\qquad\Phi(v)(0) \leq \Phi(w)(0),\label{ass:NEWASS2}\\
&\nonumber\text{$w_n \in W_{\mathrm r}(0,T)$, $w_n \weaklyto w$ in $L^2(0,T;V)$ and weak-* in $L^\infty(0,T;H)$}\\
&\qquad\qquad\qquad\qquad\qquad\qquad\qquad\text{$ \implies \Phi(w_n) \to \Phi(w)$ in $W(0,T)$},\tag{O3b}\label{ass:strongerCompleteContinuityOfPhi}
\end{empheq}
hold. Then the sequence $\{z^n\}_{n \in \mathbb{N}}$ denoted by
\begin{align*}
z^0 &:= z_{\mathrm{sub}},\\
z^n &:= S_{z_0}(f, z^{n-1}) \quad\text{ for } n = 1, 2, 3, ...,
\end{align*}
is well defined, monotonically increasing and satisfies
\begin{equation*}
\begin{aligned}
&z^n \nearrow z \text{ where } z \in \mathbf{P}_{z_0}(f) \text{ is the minimal very weak solution in $[z_{\mathrm{sub}}, \infty),$}\\
&z^n \weaklyto z \text{ in $L^2(0,T;V)$ and weakly-* in $L^\infty(0,T;H)$}.
\end{aligned}
\end{equation*}
Furthermore,
\begin{itemize}
\item in the first case,  $z^n \in W_s(0,T)$ with $z^n \geq 0$ , $\partial_t z^n \weaklyto \partial_t z$ in $L^2(0,T;H)$ and $z \in W_s(0,T)$ is a strong solution (i.e. it satisfies \eqref{eq:qvi} with additional regularity on the time derivative), and both $z^n$ and $z$ are increasing in time
\item or, in the second case, $z^n \in W_{\mathrm r}(0,T)$.
\end{itemize} 
\end{theorem}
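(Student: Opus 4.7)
The plan is to verify by induction that each iterate $z^n = S_{z_0}(f, z^{n-1})$ is well defined, establish monotonicity, derive uniform energy bounds, extract a monotone limit $z$, and pass to the limit in the defining VIs.

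\textbf{Induction and monotonicity.} To apply Proposition \ref{prop:existenceForParabolicVIObstacle} at each step, the hypotheses on $\Phi(z^{n-1})$ must be inherited from those on $\Phi(z_{\mathrm{sub}})$. By monotonicity of the iteration, $z^{n-1} \geq z_{\mathrm{sub}}$, so $\Phi(z^{n-1}) \geq \Phi(z_{\mathrm{sub}}) \geq 0$; in case~1, temporal monotonicity of $\Phi(z^{n-1})$ follows from $z^{n-1}$ being time-increasing (itself an output of Proposition \ref{prop:existenceForParabolicVIObstacle}) together with \eqref{ass:newIncreasingProperty2}; the required regularity and initial-data compatibility come from \eqref{ass:PhiTakesWsIntoCts}, \eqref{ass:T2}, \eqref{ass:NEWASS} and \eqref{ass:NEWASS2}. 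Monotonicity $z^n \leq z^{n+1}$ is then proved inductively: the base case $z^0 = z_{\mathrm{sub}} \leq S(f, z_{\mathrm{sub}}) = z^1$ is the subsolution hypothesis, and the step uses that $z^n \leq z^{n+1}$ implies $\Phi(z^n) \leq \Phi(z^{n+1})$ so that Lemma \ref{lem:comparisonPrinciple} yields $z^{n+1} \leq z^{n+2}$.

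\textbf{Estimates and convergence.} Since $0 \leq \Phi(z^{n-1})$, testing the VI for $z^n$ with $v = 0$ and arguing as in Lemma \ref{lem:boundsOnunN} gives $\norm{z^n}{L^2(0,T;V) \cap L^\infty(0,T;H)} \leq C$. In case~1 the choice $v = z^{n-1}$ (admissible since $z^{n-1} \leq z^n \leq \Phi(z^{n-1})$) additionally yields $\norm{\partial_t z^n}{L^2(0,T;H)} \leq C$. By monotonicity and dominated convergence, $z^n \to z$ in $L^2(0,T;H)$; combined with the $L^2(0,T;V)$ bound, $z^n \weaklyto z$ in $L^2(0,T;V)$ and weakly-$\star$ in $L^\infty(0,T;H)$, and in case~1 also $\partial_t z^n \weaklyto \partial_t z$ in $L^2(0,T;H)$. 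The complete continuity assumptions \eqref{ass:completeContinuityOfPhi} or \eqref{ass:strongerCompleteContinuityOfPhi} yield $\Phi(z^{n-1}) \to \Phi(z)$ in $L^2(0,T;V)$ (respectively $W(0,T)$), so passing to the limit in $z^n \leq \Phi(z^{n-1})$ gives feasibility $z \leq \Phi(z)$.

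\textbf{Limiting VI and minimality.} Given a feasible test function $v$, I would use the recovery sequence $v^n := v + \Phi(z^{n-1}) - \Phi(z)$, which lies below $\Phi(z^{n-1})$ and converges to $v$ by complete continuity. In case~1 I would pass to the limit in the strong VI formulation, handling the parabolic term via the identity $\int_0^T (\partial_t z^n, z^n)_H = \tfrac{1}{2}(\norm{z^n(T)}{H}^2 - \norm{z_0}{H}^2)$ and weak lower semicontinuity of the $H$-norm (noting $z^n(T) \weaklyto z(T)$ in $H$ by Aubin--Lions), weak lower semicontinuity of the bilinear form induced by $A$, and weak-strong pairing on cross terms. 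In case~2 I would pass to the limit in the very weak formulation, now leveraging $v^n \to v$ in $W(0,T)$ from \eqref{ass:strongerCompleteContinuityOfPhi}. Minimality in $[z_{\mathrm{sub}}, \infty)$ follows since any very weak solution $\tilde z \geq z_{\mathrm{sub}}$ satisfies, by an induction with Lemma \ref{lem:comparisonPrinciple}, $\tilde z \geq z^n$ for every $n$, whence $\tilde z \geq z$. The main technical obstacle is the compatibility of initial data for the recovery sequence in case~2 (very weak test functions must satisfy $v^n(0) = z_0$, yet $v^n(0) - z_0 = \Phi(z^{n-1})(0) - \Phi(z)(0)$ in general); assumption \eqref{ass:NEWASS2} is tailored precisely to control this discrepancy.
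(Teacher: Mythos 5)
Your overall architecture matches the paper's proof almost step for step: monotonicity via the subsolution property and Lemma \ref{lem:comparisonPrinciple}, existence by inductively re-verifying the hypotheses of Proposition \ref{prop:existenceForParabolicVIObstacle}, energy bounds by testing with $v=0$, the recovery sequence $v^n = v + \Phi(z^{n-1})-\Phi(z)$ together with \eqref{ass:completeContinuityOfPhi}/\eqref{ass:strongerCompleteContinuityOfPhi} for the limit passage, and minimality by iterating the comparison principle against any solution in $[z_{\mathrm{sub}},\infty)$. Those parts are sound.

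There is, however, one genuine gap: your derivation of the bound $\norm{\partial_t z^n}{L^2(0,T;H)}\leq C$ in case 1 by testing the VI for $z^n$ with $v=z^{n-1}$. This choice of test function is admissible, but it only produces
\[
\int_0^T \langle \partial_t z^n + Az^n - f,\, z^n - z^{n-1}\rangle \leq 0,
\]
and here $n$ is the \emph{iteration} index, not a time index: $z^n - z^{n-1}$ is the difference of two successive VI iterates, not (a rescaling of) a difference quotient in time. The pairing $(\partial_t z^n, z^n - z^{n-1})_H$ therefore gives no control whatsoever on $\norm{\partial_t z^n}{L^2(0,T;H)}$. You appear to be transplanting the argument of Lemma \ref{lem:boundsOnunN}, where testing the \emph{time-discretised} problem with the previous \emph{time step} $z_{n-1}^N$ works precisely because $(z_n^N - z_{n-1}^N)/h$ is the discrete time derivative; that mechanism is absent here. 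The paper instead obtains the uniform derivative bound by recalling that each $z^j$ is itself constructed (under the first set of assumptions, via Proposition \ref{prop:existenceUsingDiscretisation}) as the limit of Rothe interpolants whose time derivatives are bounded in $L^2(0,T;H)$ by a constant coming from \eqref{eq:bdhinvDifferencesInH} that depends only on $f$ and $z_0$ — not on the obstacle — and then uses weak lower semicontinuity of the norm to transfer this bound to $z^j$ uniformly in $j$. Without this (or an alternative such as testing with a genuine time translate $z^n(\cdot - h)$, which requires its own justification), your case-1 conclusions that $\partial_t z^n \weaklyto \partial_t z$ in $L^2(0,T;H)$ and that $z \in W_s(0,T)$ is a strong solution are unsupported. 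A minor secondary point: \eqref{ass:NEWASS2} is used in the paper to guarantee $z_0 \leq \Phi(z^n)(0)$ so that the existence proposition can be reapplied at each iteration, rather than to fix the initial trace of the recovery sequence as you suggest.
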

\noindent Before we proceed, let us observe that 
\begin{enumerate}
\item since $\Phi$ is increasing, \eqref{ass:PhiNonNegativeForNonNegativeObs} is equivalent to $\psi \geq 0 \implies \Phi(\psi) \geq 0$
\item \eqref{ass:strongerCompleteContinuityOfPhi} implicitly implies that $\Phi(w) \in W(0,T)$.
\end{enumerate}
\begin{proof}
The proof is split into five steps.

\noindent \textsc{1. Monotonicity of $\{z^n\}$}. The $z^n$ (if they exist) satisfy for all $v \in L^2(0,T;V)$ with $v(t) \leq \Phi(z^{n-1})(t)$ the inequality
\begin{equation}\label{eq:VIforiterates}
\begin{aligned}
z^n(t) \leq \Phi(z^{n-1})(t) : \int_0^T \langle z_t^n(t) + Az^n(t) - f(t), z^n(t) - v(t) \rangle &\leq 0 \\
z^n(0) &= z_0.
\end{aligned}
\end{equation}
Since $z^0$ is a subsolution, $z^1 = S(f,z^0) \geq z^0$. Suppose that $z^j \geq z^{j-1}$ for some $j$; then $z^{j+1} = S(f,z^j) \geq S(f,z^{j-1}) = z^j$ (the inequality due to Lemma \ref{lem:comparisonPrinciple}). This shows that $z^n$ is a monotonically increasing sequence. 

\medskip

\noindent \textsc{2. Existence of $\{z^n\}$}. For the actual existence, we apply Proposition \ref{prop:existenceForParabolicVIObstacle} as we see now. 

\medskip

\noindent \textit{First case}. In case of \eqref{ass:dataNonnegativeAndfInc}, \eqref{ass:NEWASS}, \eqref{ass:NEWASSPHIPSI}, \eqref{ass:PhiAtSubsolnIsCts}, \eqref{ass:newIncreasingPropertyOfSub} and since $\Phi(z_{\mathrm{sub}}) \geq 0$, Proposition \ref{prop:existenceForParabolicVIObstacle} tells us that $z^1=S(f,z_{\mathrm{sub}}) \in W_s(0,T)$. We find $z^1 \geq S(0,0) = 0$ by Lemma \ref{lem:comparisonPrinciple}, and hence by \eqref{ass:PhiNonNegativeForNonNegativeObs}, $\Phi(z^1) \geq 0$. 

Let us also see why $z_0 \leq \Phi(z^1)(0)$. The monotonicity above implies that $\Phi(z_{sub}) \leq \Phi(z^1)$. As $z^1 \in W_s(0,T)$, \eqref{ass:PhiTakesWsIntoCts} implies that $\Phi(z^1) \in C^0([0,T];V)$, which along with \eqref{ass:PhiAtSubsolnIsCts} implies that we can take the trace of the previous inequality at time $0$, giving $z_0 \leq \Phi(z_{sub})(0) \leq \Phi(z^1)(0)$ where the first inequality is with the aid of \eqref{ass:InitialDataBoundedAboveByObstacleAtZero}. Making use of \eqref{ass:PhiTakesWsIntoCts}, the increasing property and \eqref{ass:newIncreasingProperty2} (which tells us that $t \mapsto \Phi(z^1)(t)$ is increasing, since $t \mapsto z^1(t)$ is), Proposition \ref{prop:existenceForParabolicVIObstacle} is again applicable and we use it to obtain $z^2$.

By bootstrapping this argument we get that $z^n \in W_s(0,T)$ is well defined. Furthermore we have $z^n \geq 0$ by the sign condition on the data.

\medskip

\noindent \textit{Second case}. In case of \eqref{ass:PhiIsIncreasingForAllFnsBiggerThanSubsoln} and \eqref{ass:PhizSubInW}, \eqref{ass:InitialDataBoundedAboveByObstacleAtZero}, \eqref{ass:PhiAtPsiIsIncreasingInTime} and \eqref{ass:existenceForPVIOBstacleViaBrezis} and \eqref{ass:InitialDataBoundedAboveByObstacleAtZero} hold for the obstacle $z_{\mathrm{sub}}$ and we get $z^1 = S(f, z_{sub}) \in W_{\mathrm r}(0,T)$. Applying \eqref{ass:T2} to this, $\Phi(z^1) \in W_{\mathrm r}(0,T)$. Suppose that the first part of \eqref{ass:NEWASS2} holds. Then since $z^1(0) = z_0$,  we find that $z_0 \leq \Phi(z^1)(0)$ and then again \eqref{ass:initialDataNewAss} is satisfied for the obstacle $\Phi(z^1)$, giving existence of $z^2=S(f,z^1)$. Repeating this, we get $z^n \in W_{\mathrm r}(0,T)$. If instead the second part of \eqref{ass:NEWASS2} holds, we get by monotonicity that $z^0 \leq z^1$ and using the increasing property of $\Phi$, $z_0 \leq \Phi(z^0)(0) \leq \Phi(z^1)(0)$ (the first inequality by \eqref{ass:NEWASS}) and again we can apply the existence and proceed in this manner for general $n$.


\medskip

\noindent \textsc{3. Uniform bounds on $\{z^n\}$.} By \eqref{ass:PhiNonNegativeForNonNegativeObs} and the fact that $z^n \geq 0$, or by \eqref{ass:PhiNonNegativeForAllPsiInBochnerSpace}, we find that $0$ is a valid test function in \eqref{eq:VIforiterates} and testing with it yields
\begin{align*}
\frac 12  \frac{d}{dt}\int_0^T\norm{z^n(t)}{H}^2 + C_a\int_0^T\norm{z^n(t)}{V}^2 
&\leq \epsilon\int_0^T \norm{z^n(t)}{H}^2 + C_\epsilon\int_0^T \norm{f(t)}{H}^2
\end{align*}
which immediately leads to bounds in $L^\infty(0,T;H)$ and $L^2(0,T;V)$ giving the weak convergences stated in the theorem 
to some $z$, for the full sequence (and not a subsequence) thanks to the monotonicity property. 

\medskip

\noindent \textit{3.1. Uniform bounds on $\partial_t z^n$ under first set of assumptions.} In this case, we can obtain a bound on the time derivative. Indeed, due to the work on the time discretisations in \S \ref{sec:discretisation}, from \eqref{eq:listOfConvergences} and Lemma \ref{lem:identificationOfHatuAndu} we know that for each $j$, the interpolant $(z^j)^N$ of the time-discretised solutions is such that $(z^j)^N \weaklyto z^{j}$ in $L^2(0,T;V)$ and $\partial_t (\hat {z}^j)^N \weaklyto \partial_t z^{j}$ in $L^2(0,T;H)$. One should bear in mind that the $\{(z^j)^N\}_N$ are interpolants constructed from solutions of elliptic VIs and not QVIs since  the $\{z^j\}_{j \in \mathbb{N}}$ are solutions of VIs. One observes the bound
\[\norm{\partial_t z^{j}}{L^2(0,T;H)} \leq \liminf_{N \to \infty} \norm{\partial_t (\hat{z}^j)^N}{L^2(0,T;H)} \leq C\]
where the constant $C$ ultimately arises from \eqref{eq:bdhinvDifferencesInH}. Evidently, it only depends on the initial data and the source term. Hence, in this case,
\begin{equation}\label{eq:derivativeLimit}
\partial_t z^j \weaklyto \partial_t z \text{ in $L^2(0,T;H)$}.
\end{equation}

\medskip

\noindent \textsc{4. Passage to the limit.} Either of the conditions \eqref{ass:completeContinuityOfPhi} and \eqref{ass:strongerCompleteContinuityOfPhi} allow us to pass to the limit in $z^n(t) \leq \Phi(z^{n-1})(t)$ to deduce the feasibility of $z$ since order is preserved in norm convergence. Now  let $v^* \in L^2(0,T;V)$ be a test function such that $v^* \leq \Phi(z)$. We use 
\[v^n(t) :=  v^*(t) + \Phi(z^{n-1})(t) - \Phi(z)(t)\] 
as the test function in the VI \eqref{eq:VIforiterates}. 

\medskip

\noindent \textit{4.1. Under first set of assumptions.} In this case, using the strong convergence $v^n \to v^*$ assured by \eqref{ass:completeContinuityOfPhi}, we can use \eqref{eq:derivativeLimit} and pass to the limit after writing the duality pairing for the time derivative as an inner product to get the inequality
\[\int_0^T (z'(t), z(t)-v^*(t))_H + \langle Az(t)-f(t), z(t)-v^*(t) \rangle \leq 0.\]

\medskip

\noindent \textit{4.2. Under second set of assumptions.} In this case, we take the limiting test function $v^* \in W(0,T)$ with $v^*(0)=z_0$ and rewrite \eqref{eq:VIforiterates}, using the monotonicity of the time derivative and assumption \eqref{ass:strongerCompleteContinuityOfPhi} (which guarantees that $\Phi(z) \in W(0,T)$, and hence $v^n \in W(0,T)$) as
\begin{equation*}
\begin{aligned}
z^n(t) \leq \Phi(z^{n-1})(t) : \int_0^T \langle v^n_t(t) + Az^n(t) - f(t), z^n(t) - v^n(t) \rangle &\leq 0 \\
z^n(0) &= z_0.
\end{aligned}
\end{equation*}
By \eqref{ass:strongerCompleteContinuityOfPhi}, we find that
$v^n \to v^*$ in $W(0,T)$, and hence we can pass to the limit in the above  to obtain \eqref{eq:qviWeak}.

\medskip

\noindent \textsc{5. Minimality of the solution.}
Suppose that $z^* \in \mathbf{P}(f)$ is the minimal solution on the interval $[z_{\mathrm{sub}}, \infty)$, which in particular implies $z^* \leq z.$ We see by the comparison principle and since $z^0=z_{\mathrm{sub}}$ is a subsolution that
\[z^* = S(f,z^*) \geq S(f,z^0) \geq z^0.\]
By this, we find $z^1=S(f,z^0) \leq S(f,z^*) = z^*$. Similarly, $z^2 = S(f,z^1) \leq S(f,z^*) = z^*$, and thus
\[z^n \leq z^*.\]
Passing to the limit shows that $z \leq z^*$ so that $z=z^*$.
\end{proof}
\begin{remark}
Note that the compactness assumptions \eqref{ass:completeContinuityOfPhi} and \eqref{ass:strongerCompleteContinuityOfPhi} are only required for identifying the limit point $z$ and showing that it is feasible.
\end{remark}
%
%
\begin{theorem}[Decreasing approximation of the maximal solution of QVI by solutions of VIs]\label{thm:approximationOfQVIByVIIteratesDec}
Let $z^0 := z^{\sup}$ be a supersolution of $\mathbf{P}(f)$ and assume that
\begin{equation*}
w \in W_{\mathrm r}(0,T) : w(0) = z_0 \implies  z_0 \leq  \Phi(w)(0)\text{, or }  \Phi(v) \geq \Phi(w) \implies  \Phi(v)(0) \leq \Phi(w)(0).
\end{equation*}
Under the assumptions of the previous theorem (except \eqref{ass:NEWASS2}) except with $z_{\mathrm{sub}}$ replaced with $z^{sup}$ and \eqref{ass:PhiIsIncreasingForAllFnsBiggerThanSubsoln} replaced with
\begin{align*}
&t \mapsto \Phi(\psi)(t) \text{ is increasing for all $\psi \in L^2(0,T;V)$ with $\psi \leq z^{sup}$},
\end{align*}
the sequence $\{z^n\}$ is monotonically decreasing and converges to a solution $z \in \mathbf{P}(f)$ with the same regularity and convergence results as stated in Theorem \ref{thm:approximationOfQVIByVIIterates}. Furthermore, $z$ is the maximal solution of \eqref{eq:qvi} or \eqref{eq:qviWeak} in the interval $(-\infty,z^{sup}]$.
\end{theorem}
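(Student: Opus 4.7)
The plan is to mirror the five-step structure of the proof of Theorem \ref{thm:approximationOfQVIByVIIterates} with all inequalities reversed. First, for monotonicity: since $z^0 = z^{\sup}$ is a supersolution, $z^1 = S(f,z^0) \leq z^0$ directly; assuming inductively that $z^j \leq z^{j-1}$, Lemma \ref{lem:comparisonPrinciple} yields $z^{j+1} = S(f,z^j) \leq S(f,z^{j-1}) = z^j$, so $\{z^n\}$ is monotonically decreasing.

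For the well-definedness of each iterate, I would apply Proposition \ref{prop:existenceForParabolicVIObstacle} to the VI with obstacle $\Phi(z^{n-1})$. The only non-trivial checks are (i) that $t \mapsto \Phi(z^{n-1})(t)$ is increasing and (ii) that $z_0 \leq \Phi(z^{n-1})(0)$. For (i), the replacement of \eqref{ass:PhiIsIncreasingForAllFnsBiggerThanSubsoln} by the hypothesis that $t \mapsto \Phi(\psi)(t)$ is increasing whenever $\psi \leq z^{\sup}$ is exactly what is needed, because monotonicity gives $z^{n-1} \leq z^0 = z^{\sup}$ at every stage. For (ii), the newly stated hypothesis of the theorem supplies $z_0 \leq \Phi(z^{n-1})(0)$: under the second set of assumptions, $z^{n-1} \in W_{\mathrm r}(0,T)$ with $z^{n-1}(0) = z_0$, so the first branch applies directly; under the first set, the argument is as in the proof of Theorem \ref{thm:approximationOfQVIByVIIterates} using \eqref{ass:InitialDataBoundedAboveByObstacleAtZero} and \eqref{ass:PhiTakesWsIntoCts}.

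Uniform bounds in $L^2(0,T;V) \cap L^\infty(0,T;H)$ (and $W_s(0,T)$ or $W_{\mathrm r}(0,T)$ respectively) are obtained exactly as in step 3 of the original proof by testing the VI with $0$ (feasible by non-negativity of $\Phi$) and invoking the time-discretisation bounds of \S \ref{sec:discretisation} for the time-derivative estimate. Monotonicity promotes subsequential weak/weak-$\ast$ convergence to full-sequence convergence to some limit $z$. Feasibility $z(t) \leq \Phi(z)(t)$ follows from passing to the limit in $z^n(t) \leq \Phi(z^{n-1})(t)$ via \eqref{ass:completeContinuityOfPhi} or \eqref{ass:strongerCompleteContinuityOfPhi}. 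To obtain the VI in the limit, I would use the recovery test function $v^n := v^* + \Phi(z^{n-1}) - \Phi(z)$ in \eqref{eq:VIforiterates} and either rewrite the duality product for the time derivative as an inner product (first case, using the $L^2(0,T;H)$ weak convergence of $\partial_t z^n$) or transfer the time derivative onto the test function and use $v^n \to v^*$ in $W(0,T)$ (second case).

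For maximality, let $z^* \in \mathbf{P}(f)$ with $z^* \leq z^{\sup}$. Then $z^* \leq z^0$ and Lemma \ref{lem:comparisonPrinciple} gives $z^* = S(f,z^*) \leq S(f,z^0) = z^1$; by induction $z^* \leq z^n$ for all $n$, so $z^* \leq z$ and $z$ is the maximal element of $\mathbf{P}(f) \cap (-\infty, z^{\sup}]$. The main obstacle I anticipate is the bookkeeping in step 2: ensuring that the initial-data compatibility and time-monotonicity of the obstacle are propagated along a \emph{decreasing} iteration, which is precisely why the theorem reformulates the monotonicity hypothesis over $\{\psi \leq z^{\sup}\}$ and introduces the alternative initial-data condition in place of \eqref{ass:NEWASS2}.
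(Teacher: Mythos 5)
Your proposal is correct and follows essentially the same route as the paper: the paper's own proof simply invokes "the same argumentation as in the proof of the previous theorem" with the inequalities reversed (which you spell out, correctly identifying that the reformulated initial-data and time-monotonicity hypotheses are exactly what is needed to propagate compatibility along a decreasing iteration), and then gives the same maximality argument via $z^* = S(f,z^*) \leq S(f,z^{n-1}) = z^n$ by induction and passage to the limit.
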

\begin{proof}
It follows that $z \in \mathbf{P}(f) \cap (-\infty, z^{sup}]$ by the same argumentation as in the proof of the previous theorem. Let us prove the claim of the maximality of the solution. Suppose that there exists a maximal solution $z^* \in (-\infty, z^{sup}]$ so that $z^* \geq z$ where $z = \lim_n z^n $ with $z^0 = z^{sup}.$ We have $z^0=z^{sup} \geq S(f,z^{sup}) \geq S(f,z^*) = z^*$, and thus $z^1 = S(f,z^0) \geq S(f,z^*) = z^*.$ Iterating shows that
\[z^n \geq z^*,\]
whence passing to the limit, $z \geq z^*$, and thus $z = z^*$ is the maximal solution.
\end{proof}

\subsection{Transformation of VIs with obstacle to zero obstacle VIs}\label{sec:transformation}
It will become useful to relate solutions of the parabolic VI \eqref{eq:parabolicVIObstacleMapping} with non-trivial obstacle to solutions of parabolic VIs with zero (lower) obstacle.  We achieve this as follows. Take $w_0 \geq 0$ and define $\bar{S}_{w_0}\colon L^2(0,T;H) \to W_s(0,T)$ by $\bar{S}_{w_0}(g):=w$ the solution to the parabolic VI with lower obstacle
\begin{equation*}
\begin{aligned}
w(t) \geq 0 : \int_0^T \langle w'(t) + Aw(t) - g(t), w(t)-v(t) \rangle &\leq 0\quad \forall v \in L^2(0,T;V) : v(t) \geq 0,\\
w(0) &= w_0.
\end{aligned}
\end{equation*}
Omitting when convenient the subscript, we obtain the following estimate for $w_i = \bar{S}(g_i)$:
\[\frac 12 \norm{w_1(t)-w_2(t)}{H}^2 + C_a \norm{w_1-w_2}{L^2(0,t;V)}^2 \leq \int_0^t (g_1(r)-g_2(r),w_1(r)-w_2(r))_H\]
which then leads to
\begin{align}
\norm{\bar{S}(g_1)-\bar{S}(g_2)}{L^\infty(0,T;H)} &\leq 2\norm{g_1-g_2}{L^1(0,T;H)},\label{eq:S0LipschitzLinfty}
\end{align}
and (due to Young's inequality applied to the right-hand side)
\begin{equation*}
\norm{\bar{S}(g_1)-\bar{S}(g_2)}{L^\infty(0,T;H)}^2 + C_a\norm{\bar{S}(g_1)-\bar{S}(g_2)}{L^2(0,T;V)}^2 \leq \frac{1}{C_a}\norm{g_1-g_2}{L^2(0,T;V^*)}^2,
\end{equation*}
hence also
\begin{equation}\label{eq:S0LipschitzWeirdSpaces2}
\norm{\bar{S}(g_1)-\bar{S}(g_2)}{L^\infty(0,T;H)} \leq \frac{1}{\sqrt{C_a}}\norm{g_1-g_2}{L^2(0,T;V^*)}.
\end{equation}
Let us note that  \eqref{eq:S0LipschitzLinfty} in particular implies
\begin{equation}\label{eq:S0LipschitzLinfty1}
\norm{\bar{S}(g_1)-\bar{S}(g_2)}{L^p(0,T;H)} \leq 2T^{\frac 1p}\norm{g_1-g_2}{L^1(0,T;H)}.
\end{equation}
The relationship between solutions of VIs with non-trivial obstacles and VIs with zero obstacle is given in the next result.
\begin{prop}\label{prop:conditionsForFormulaToHold}Let \eqref{ass:PhiAtPsiIsIncreasingInTime}, \eqref{ass:InitialDataBoundedAboveByObstacleAtZero} hold and let $g \in L^2(0,T;H)$, $z_0 \in V$ and $\psi \in L^2(0,T;V)$ be such that $\Phi(\psi) \in W_{\mathrm r}(0,T)$. 

Then
\begin{equation}\label{eq:relationS0AndS}
S_{z_0}(g,\psi) = \Phi(\psi) - \bar{S}_{w_0}({L}\Phi(\psi)-g)
\end{equation}
holds in $W_{\mathrm r}(0,T)$ where $w_0 =  \Phi(\psi)(0) - z_0$.

Furthermore, if \eqref{ass:dataNonnegativeAndfInc}, \eqref{ass:NEWASSPHIPSI}, and \eqref{ass:PhiIsNonNegativeAtPsi} hold, then in fact $S_{z_0}(g,\psi) \in W_s(0,T)$ and hence the spatial regularity $AS_{z_0}(g,\psi) \in L^2(0,T;H)$.
\end{prop}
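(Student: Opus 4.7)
The plan is to transform the parabolic VI satisfied by $z := S_{z_0}(g,\psi)$ into the zero-obstacle VI for $w := \Phi(\psi)-z$ and then invoke uniqueness for $\bar{S}$.

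First I would establish the existence of $z$ in $W_{\mathrm r}(0,T)$ by applying Proposition~\ref{prop:existenceUsingBrezis} with obstacle $\Phi(\psi)$: its hypotheses reduce to \eqref{ass:PhiAtPsiIsIncreasingInTime}, \eqref{ass:InitialDataBoundedAboveByObstacleAtZero} and $\Phi(\psi)\in W_{\mathrm r}(0,T)$, all of which hold. In particular $L\Phi(\psi)\in L^2(0,T;H)$, so $L\Phi(\psi)-g\in L^2(0,T;H)$ and $\bar{S}_{w_0}(L\Phi(\psi)-g)$ is well defined, with $w_0 = \Phi(\psi)(0)-z_0 \geq 0$ by \eqref{ass:InitialDataBoundedAboveByObstacleAtZero}.

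The core step is to substitute $v := \Phi(\psi)-\tilde v$ into the defining inequality for $z$; this yields a feasible test function precisely when $\tilde v(t)\geq 0$. Since $z-v = -w+\tilde v$, and because $w = \Phi(\psi)-z \in W_{\mathrm r}(0,T)$ gives the identity $z' + Az = L\Phi(\psi) - Lw$ in $L^2(0,T;V^*)$, the VI becomes
\[\int_0^T \langle Lw(t) - (L\Phi(\psi)(t)-g(t)),\, w(t)-\tilde v(t)\rangle \leq 0\]
for all $\tilde v \in L^2(0,T;V)$ with $\tilde v \geq 0$. Together with $w(t) = \Phi(\psi)(t)-z(t) \geq 0$ a.e.\ (feasibility of $z$) and $w(0) = w_0$, this is exactly the VI characterising $\bar{S}_{w_0}(L\Phi(\psi)-g)$. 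Uniqueness for $\bar{S}$, which follows from the Lipschitz bound \eqref{eq:S0LipschitzWeirdSpaces2} applied to the difference of two solutions with identical data, then forces $w = \bar{S}_{w_0}(L\Phi(\psi)-g)$, i.e., \eqref{eq:relationS0AndS}. The identity takes place in $W_{\mathrm r}(0,T)$ since both $\Phi(\psi)$ and $z$ belong there.

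For the ``Furthermore'' claim, the added hypotheses \eqref{ass:dataNonnegativeAndfInc}, \eqref{ass:NEWASSPHIPSI} and \eqref{ass:PhiIsNonNegativeAtPsi}, combined with \eqref{ass:PhiAtPsiIsIncreasingInTime} and \eqref{ass:InitialDataBoundedAboveByObstacleAtZero} already in force, place us in the first case of Proposition~\ref{prop:existenceForParabolicVIObstacle}, which upgrades $z$ to $W_s(0,T)$. Since $Lz \in L^2(0,T;H)$ by the first part and $z' \in L^2(0,T;H)$ by $W_s$-regularity, we obtain $Az = Lz - z' \in L^2(0,T;H)$.

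The main delicate point I foresee is the bookkeeping around the decomposition $z'+Az = L\Phi(\psi)-Lw$: this manipulation is legitimate only because $\Phi(\psi)\in W_{\mathrm r}(0,T)$ is regular enough for $w$ to share the same class as $z$, and for the substitution $v = \Phi(\psi)-\tilde v$ to produce admissible test functions in $L^2(0,T;V)$. The other subtle point is appealing to uniqueness of $\bar{S}$ in $W(0,T)$ (not merely in $W_s(0,T)$), but this is immediate from \eqref{eq:S0LipschitzWeirdSpaces2}.
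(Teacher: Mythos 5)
Your proposal is correct and follows essentially the same route as the paper: existence of $z=S_{z_0}(g,\psi)$ in $W_{\mathrm r}(0,T)$ via the Brezis--Stampacchia-type result, then the change of variables $w=\Phi(\psi)-z$, $v=\Phi(\psi)-\tilde v$ to convert the VI into the zero-obstacle VI defining $\bar S_{w_0}$. The only (harmless) additions are your explicit appeal to uniqueness of $\bar S$ via \eqref{eq:S0LipschitzWeirdSpaces2} and the spelled-out deduction $Az=Lz-z'\in L^2(0,T;H)$, both of which the paper leaves implicit.
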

\begin{proof}
Under the hypotheses, Proposition \ref{prop:existenceForParabolicVIObstacle} can be applied to deduce that $z:=S_{z_0}(g,\psi)$ is well defined in  $W_{\mathrm r}(0,T)$ and it solves the VI \eqref{eq:parabolicVIObstacleMapping}.
Set $w := \Phi(\psi)- z$ (which belongs to $W_{\mathrm r}(0,T)$) and observe that
\begin{align*}
\int_0^T \langle \partial_t \Phi(\psi) + A\Phi(\psi) - w' - Aw - g, \Phi(\psi) - w - v  \rangle &\leq 0,\\
w(0) &= w_0 :=  \Phi(\psi)(0) - z_0 \in H.
\end{align*}
The upper bound on $z_0$ implies that $w_0 \geq 0$. Now define $\varphi(t) := \Phi(\psi)(t) - v(t)$. Then the above reads
\begin{align*}
w(t) \geq 0 : \int_0^T \langle w'(t) + Aw(t) + g(t)- \partial_t \Phi(\psi) - A\Phi(\psi) , w(t)-\varphi(t)  \rangle &\leq 0 \\
&\!\!\!\!\!\!\!\!\!\!\!\!\!\!\!\!\!\!\!\!  \forall \varphi \in L^2(0,T;V) : \varphi(t) \geq 0,\\
w(0) &= w_0.
\end{align*}
This shows the desired identity $\bar{S}_{w_0}({L}\Phi(\psi)-g)=w=\Phi(\psi)-z = \Phi(\psi) - S_{z_0}(g,\psi)$. Under the additional assumptions, Proposition \ref{prop:existenceForParabolicVIObstacle} yields $z \in W_s(0,T)$ and $w = \Phi(\psi) - z \in W_{\mathrm r}(0,T) + W_s(0,T)$.
\end{proof}

\section{Expansion formula for variations in the obstacle and source term}\label{sec:expansion}
The aim in this section is obtain differential expansion formulae for the solution mapping of parabolic VIs with respect to perturbations on the source term and the obstacle. This will form the backbone of our QVI differentiability result in the next section.

\subsection{Definitions and cones from variational analysis}
To state directional differentiability results for VIs, we need some concepts and notation which we shall collect in this subsection. Let us define the lower obstacle sets
\begin{align*}
K_0 &:= \{ v \in V : v \geq 0\}=V_+ \quad \text{ and }\quad 
\mathbb{K}_0 
:= \{ v \in W(0,T) : v(t) \in K_0 \text{ for a.e. $t \in [0,T]$}\},
\end{align*}
and for $y \in \mathbb{K}_0$, the \textit{radial cone} at $y$
\begin{align}
\nonumber T_{\mathbb{K}_0}^{\mathrm{rad}}(y) &:= \{ v \in W(0,T) : \exists \rho^* > 0 \text{ s.t. } y + \rho v \in \mathbb{K}_0 \text{ for all } \rho < \rho^*\}\\
&= \{ v \in W(0,T) : \exists \rho^* > 0 \text{ s.t. } y(t) + \rho v(t) \in K_0 \text{ for a.e. $t \in [0,T]$ for all } \rho < \rho^*\}\label{eq:dd1}.
\end{align}
We shall consider $\mathbb{K}_0$ as a subset of the Banach space $L^2(0,T;V)$. The \textit{tangent cone} is defined as the closure of the radial cone:
\[T_{\mathbb{K}_0, L^2(0,T;V)}^{\mathrm{tan}}(y) := \cl_{L^2(0,T;V)}T^{\mathrm{rad}}_{\mathbb{K}_0}(y).\]
Obviously, $T^{\mathrm{rad}}_{\mathbb{K}_0}(y) \subset T_{\mathbb{K}_0, L^2(0,T;V)}^{\mathrm{tan}}(y).$  We now show that the tangent cone is contained in a set which has a convenient description (see also the discussion after Remark 5.6 in \cite{Christof}).

We will need the notions of capacity of sets, quasi-continuity of functions and related concepts, consult \cite[\S 3]{MR0423155}, \cite[\S 3]{MR0481060} and \cite[\S 6.4.3]{MR1756264} for more details. Below, we shall use the abbreviation `q.e.' to mean quasi-everywhere; a statement holds quasi-everywhere if it holds everywhere except on a set of capacity zero. 
\begin{lem}\label{lem:tangentConeK0}
The tangent cone of $\mathbb{K}_0$ can be characterised as
\begin{equation}\label{eq:tangentConeInclusion}
T^{\mathrm{tan}}_{\mathbb{K}_0,L^2(0,T;V)}(y) \subset \{ v \in L^2(0,T;V) : v(t) \geq 0 \text{ q.e. on $\{\bar y(t)=0\}$ for a.e. $t \in [0,T]$}\}
\end{equation}
where $\bar y(t)$ is a quasi-continuous representative of $y(t)$.
\end{lem}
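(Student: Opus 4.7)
The plan is to argue by taking a limit along the radial cone and then transferring the nonnegativity from approximating sequences to the limit using a quasi-everywhere subsequence argument.

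First, I would take $v \in T^{\mathrm{tan}}_{\mathbb{K}_0, L^2(0,T;V)}(y)$. By definition of the closure, there exists a sequence $v_n \in T^{\mathrm{rad}}_{\mathbb{K}_0}(y)$ with $v_n \to v$ in $L^2(0,T;V)$. For each $n$, the radial cone condition \eqref{eq:dd1} supplies $\rho_n^* > 0$ such that for every $0 < \rho < \rho_n^*$ and for a.e.\ $t \in [0,T]$, $y(t) + \rho v_n(t) \in V_+$, i.e.\ $y(t) + \rho v_n(t) \geq 0$ in $V$. Passing to quasi-continuous representatives, this yields $\overline{y(t)}(x) + \rho \overline{v_n(t)}(x) \geq 0$ for quasi-every $x$ (the exceptional polar set depending on $t$ and $n$), and restricting to $\{\bar y(t) = 0\}$ gives $\overline{v_n(t)} \geq 0$ quasi-everywhere on $\{\bar y(t) = 0\}$, for a.e.\ $t$.

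Next, I would pass to a subsequence (not relabelled) so that $v_n(t) \to v(t)$ in $V$ for a.e.\ $t \in [0,T]$. Fix such a $t$. Using the standard fact that strong convergence in $V$ implies convergence in capacity of the quasi-continuous representatives, one can extract a further subsequence $n_k = n_k(t)$ along which $\overline{v_{n_k}(t)}(x) \to \overline{v(t)}(x)$ for quasi-every $x$. Combining this q.e.\ pointwise limit with the uniform lower bound $\overline{v_{n_k}(t)} \geq 0$ q.e.\ on $\{\bar y(t) = 0\}$ yields $\overline{v(t)} \geq 0$ q.e.\ on $\{\bar y(t) = 0\}$. Since this holds for almost every $t$, the inclusion \eqref{eq:tangentConeInclusion} follows.

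The main technical obstacle is precisely the interaction between the temporal ``a.e.'' and the spatial ``q.e.'' aspects: passing to the pointwise-in-$t$ convergence requires a subsequence, and at each $t$ one needs a further subsequence to get the q.e.\ convergence, with the exceptional polar set in space depending on $t$. This is not a real obstruction because the target property is purely pointwise in $t$ (so one may extract subsequences $t$ by $t$ after first fixing an a.e.-pointwise converging subsequence in time), but it is the step that must be handled carefully. A secondary delicate point is fixing a consistent choice of quasi-continuous representative $\bar y(t)$ so that the set $\{\bar y(t) = 0\}$ is defined up to a polar null set; once that is done the argument above goes through unchanged.
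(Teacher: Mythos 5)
Your proof is correct and follows essentially the same route as the paper: reduce to the elliptic problem pointwise in time, approximate a tangent-cone element by radial-cone elements, pass to an a.e.-in-$t$ convergent subsequence, and transfer the constraint to the limit. The only cosmetic difference is that the paper handles the limit step abstractly — noting that $w(t)$ lies in the closed set $T^{\mathrm{tan}}_{K_0}(y(t))$ and then citing Mignot's characterisation of that elliptic tangent cone — whereas you re-derive the q.e.\ inequality directly via convergence in capacity of quasi-continuous representatives; both are valid.
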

\begin{proof}
If $w \in T_{\mathbb{K}_0}^{\mathrm{rad}}(y)$, then from \eqref{eq:dd1}, $w \in L^2(0,T;V)$ and there exists $\rho^* \geq 0$ such that $y(t) + \rho w(t) \in K_0$ for almost every $t$ and for all $\rho \in [0,\rho^*)$, meaning that $w(t) \in T_{K_0}^{\mathrm{rad}}(y(t)) \subset T_{{K}_0}^{\mathrm{tan}}(y(t))$ for almost every $t$. This shows that
\[T_{\mathbb{K}_0}^{\mathrm{rad}}(y) \subset \{ w \in L^2(0,T;V) :  w(t) \in T_{{K}_0}^{\mathrm{tan}}(y(t)) \text{ a.e. $t \in [0,T]$}\}\]
and if we take the closure in $L^2(0,T;V)$ on both sides,
\begin{equation}\label{eq:dd2}
T_{\mathbb{K}_0, L^2(0,T;V)}^{\mathrm{tan}}(y) \subset cl_{L^2(0,T;V)}\left(\{ w \in L^2(0,T;V) :  w(t) \in T_{{K}_0}^{\mathrm{tan}}(y(t)) \text{ a.e. $t \in [0,T]$}\}\right).
\end{equation}
Suppose that $\{w_n\} \subset L^2(0,T;V)$ is a sequence that belongs to the set on the right-hand side above with $w_n \to w$ in $L^2(0,T;V)$. Thus, for a subsequence, $w_{n_j}(t) \to w(t)$ in $V$ and $w_{n_j}(t) \in T_{K_0}^{\mathrm{tan}}(y(t))$ for almost every $t$. Since the tangent cones are closed sets, the limit point $w(t) \in T_{K_0}^{\mathrm{tan}}(y(t))$. Hence $w \in \{ w \in L^2(0,T;V) :  w(t) \in T_{{K}_0}^{\mathrm{tan}}(y(t)) \text{ a.e. $t \in [0,T]$}\}$ and the closure can be omitted on the right-hand side of \eqref{eq:dd2}.

From \cite[Lemma 3.2]{MR0423155}, Mignot proves the following description of the tangent cone of $K_0$:
\[T_{{K}_0}^{\mathrm{tan}}(y) = \{ v \in V : v \geq 0 \text{ q.e. on } \{\bar y=0\}\},\]
with $\bar y$ a quasi-continuous representative of the function $y$. This provides the characterisation stated in the lemma. 
\end{proof}
The set $\mathbb{K}_0$ is said to be \emph{polyhedric at $(y,\lambda) \in \mathbb{K}_0 \times T_{\mathbb{K}_0, L^2(0,T;V)}^{\mathrm{tan}}(y)^{\circ}$} if
\[T_{\mathbb{K}_0, L^2(0,T;V)}^{\mathrm{tan}}(y)  \cap \lambda^\perp = cl_{L^2(0,T;V)}\left(T^{\mathrm{rad}}_{\mathbb{K}_0}(y) \cap \lambda^\perp \right),\]
where
\[T_{\mathbb{K}_0, L^2(0,T;V)}^{\mathrm{tan}}(y)^{\circ}:= \{ f \in L^2(0,T;V^*) : \langle f, w \rangle \leq 0 \text{ for all } w \in T_{\mathbb{K}_0, L^2(0,T;V)}^{\mathrm{tan}}(y)\}\]
is the known as the polar cone. The set is polyhedric at $y$ if it is polyhedric at $(y,\lambda)$ for all $\lambda$, and it is polyhedric if it is polyhedric at $y$ for all $y$. The concept of polyhedricity is useful because it is a sufficient condition guaranteeing  the directional differentiability of the metric projection associated to that set (see \cite{Haraux1977, MR0423155, Bonnans} and also \cite{WachsmuthGuidedTour}) and this fact ultimately enables one to obtain directional differentiability for solution mappings of variational inequalities.

Now, the set $\mathbb{K}_0$ is not polyhedric as a subset of the space $W(0,T)$ since $W(0,T)$ lacks certain smoothness properties due to the low regularity of the time derivative. However, $\tilde{\mathbb{K}}_0 := \mathbb{K}_0 \cap W_s(0,T)$ is indeed polyhedric.
\begin{lem}
The set $\tilde{\mathbb{K}}_0$ is polyhedric as a subset of $W_s(0,T)$ and for $(y,\lambda) \in \tilde{\mathbb{K}}_0 \times T_{\tilde{\mathbb{K}}_0, W_s(0,T)}^{\mathrm{tan}}(y)^\circ$, 
\begin{align*}
\cl_{W_s(0,T)}(T_{\tilde{\mathbb{K}}_0}^{\mathrm{rad}}(y) \cap \lambda^\perp) 
&= T_{\tilde{\mathbb{K}}_0, W_s(0,T)}^{\mathrm{tan}}(y)\cap \lambda^\perp\\
&= \{ z \in W_s(0,T) : z \geq 0 \text{ $W_s(0,T)$-q.e. in $\{\bar y=0\}$}\} \cap \lambda^\perp 
\end{align*}
where $\bar y$ is a quasi-continuous representative of $y$ and
\[\{\bar y=0\} := \{ p \in [0,T]\times \bar \Omega : \bar y(p) = 0\}.\]
\end{lem}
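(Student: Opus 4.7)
The plan is to prove the lemma in two stages: first characterise the tangent cone $T^{\mathrm{tan}}_{\tilde{\mathbb{K}}_0, W_s(0,T)}(y)$, and then establish polyhedricity by an explicit approximation argument. For the tangent cone characterisation, I would parallel the proof of Lemma \ref{lem:tangentConeK0}, but now in the space $W_s(0,T)$. The inclusion $T^{\mathrm{rad}}_{\tilde{\mathbb{K}}_0}(y) \subset \{v \in W_s(0,T) : v(t) \in T^{\mathrm{rad}}_{K_0}(y(t)) \text{ a.e.}\}$ is immediate from the pointwise definition, and closing in $W_s(0,T)$ preserves pointwise-in-time membership in $T^{\mathrm{tan}}_{K_0}(y(t))$: by Aubin--Lions compactness, $W_s(0,T)$-convergent sequences admit subsequences converging pointwise a.e.\ in $V$, and $T^{\mathrm{tan}}_{K_0}(y(t))$ is $V$-closed. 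Mignot's description then yields the slice-wise q.e.\ characterisation, which must be lifted to the joint space--time q.e.\ statement on $[0,T]\times\bar\Omega$ claimed in the lemma, via an anisotropic capacity associated to $W_s(0,T)$ and a Fubini-type identity between that capacity and the slice capacities.

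For polyhedricity, I would use the Mignot--Haraux truncation method adapted to the parabolic setting. Given $z \in T^{\mathrm{tan}}_{\tilde{\mathbb{K}}_0, W_s(0,T)}(y) \cap \lambda^\perp$, one constructs approximants $z_n \in T^{\mathrm{rad}}_{\tilde{\mathbb{K}}_0}(y) \cap \lambda^\perp$ by truncating $z$ in a quasi-neighbourhood of the coincidence set $\{\bar y = 0\}$: on that neighbourhood one retains the non-negative part of $z$ (which is feasible thanks to the tangent cone condition), while on its complement the slack in $y$ allows a small radial perturbation. The orthogonality $\langle \lambda, z_n\rangle_{L^2(0,T;V)} = 0$ is preserved because $\lambda$ lying in the polar $T^{\mathrm{tan}}_{\tilde{\mathbb{K}}_0, W_s(0,T)}(y)^\circ$ is concentrated on $\{\bar y = 0\}$ in the sense that it annihilates every $W_s(0,T)$ function vanishing q.e.\ there, so the modifications to $z$, which occur away from this set, do not alter the pairing with $\lambda$. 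Convergence $z_n \to z$ in $W_s(0,T)$ then follows from the construction of the truncation combined with the uniform bound $\norm{v^+}{V} \leq C\norm{v}{V}$ assumed in the introduction.

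The principal obstacle is the set-up of an anisotropic capacity and quasi-continuity theory for $W_s(0,T)$: the stationary results of \cite{MR0423155, MR0481060} do not apply directly, and one must either adapt them or cite a suitable parabolic-capacity reference, then verify the Fubini-type consistency between the joint space--time $W_s(0,T)$-capacity and the slice $V$-capacities implicitly used in Mignot's description; only with this in hand is the lemma's statement ``$z \geq 0$ $W_s(0,T)$-q.e.\ on $\{\bar y = 0\}$'' meaningful and compatible with the slice-wise characterisation. A secondary, milder technicality is showing that the proposed truncations remain in $W_s(0,T)$, which reduces to Stampacchia-type chain rules for the positive part on $V$ (assumed in the paper) together with the Lipschitz character of $s \mapsto s^+$ on $H$, which transfers to the $H^1(0,T;H)$ component via standard Nemytskii arguments.
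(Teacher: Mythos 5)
Your outline diverges from the paper's proof and, as you yourself concede, leaves its two central steps unresolved. For polyhedricity the paper does not run a Mignot--Haraux truncation argument at all: it first shows that $(\cdot)^+\colon W_s(0,T)\to W_s(0,T)$ is bounded via the chain rule $\partial_t(v^+)=\chi_{\{v\geq 0\}}\partial_t v$, so that $W_s(0,T)$ is a vector lattice with respect to $\tilde{\mathbb{K}}_0$, and then invokes the abstract results of \cite{WachsmuthGuidedTour} (Lemma 4.8 and Theorem 4.18 there), which deliver polyhedricity and the identity $\cl_{W_s(0,T)}(T^{\mathrm{rad}}_{\tilde{\mathbb{K}}_0}(y)\cap\lambda^\perp)=T^{\mathrm{tan}}_{\tilde{\mathbb{K}}_0,W_s(0,T)}(y)\cap\lambda^\perp$ in one stroke. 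Your explicit truncation scheme is described but not executed: the membership of the truncated functions in the radial cone, the claim that $\lambda$ annihilates every function vanishing q.e.\ on $\{\bar y=0\}$, and above all the convergence of the truncations in the $H^1(0,T;H)$ component are exactly the points where such a construction can break down in $W_s(0,T)$, and none of them is verified.

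The second gap is the capacity framework. The lemma is stated in terms of the $W_s(0,T)$-capacity on the space--time set $[0,T]\times\bar\Omega$, and the paper obtains the quasi-everywhere description by observing that $W_s(0,T)$ is itself a Dirichlet space in the sense of \cite[Definition 3.1]{MR0423155} on that cylinder and applying Mignot's Lemma 3.2 directly to it --- no slicing in time and no comparison of capacities is involved. Your route, a slice-wise application of Mignot in $V$ followed by a Fubini-type identity between an anisotropic joint capacity and the slice $V$-capacities, requires precisely the machinery you flag as the principal obstacle, and that machinery is not needed. Note moreover that Lemma \ref{lem:tangentConeK0} of the paper (the $L^2(0,T;V)$ analogue) only yields an \emph{inclusion} of the tangent cone into the slice-wise set, not an equality, so even granting the capacity comparison your slice-wise strategy would face a further obstruction in upgrading the inclusion to the equality claimed in the lemma.
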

\begin{proof}
First note that if $v \in W_s(0,T)$, $\partial_t(v^+) = \chi_{\{v \geq 0\}}\partial_t v$ by the chain rule and hence we have the bound
\[\norm{v^+}{W_s(0,T)}^2 \leq C\norm{v}{L^2(0,T;V)}^2 + \norm{\partial_t v}{L^2(0,T;H)}^2,\]
which shows that $(\cdot)^+\colon W_s(0,T) \to W_s(0,T)$ is a bounded map. It follows that $W_s(0,T)$ is a vector lattice in the sense of Definition 4.6 of \cite{WachsmuthGuidedTour} when associated to the cone $\tilde{\mathbb{K}}_0$. The boundedness of $(\cdot)^+ \colon W_s(0,T) \to W_s(0,T)$ and Lemma 4.8 and Theorem 4.18 of \cite{WachsmuthGuidedTour} imply that $\tilde{\mathbb{K}}_0$ is polyhedric in $W_s(0,T)$ and hence
\begin{align*}
\cl_{W_s(0,T)}(T_{\tilde{\mathbb{K}}_0}^{\mathrm{rad}}(y) \cap \lambda^\perp) &= \cl_{W_s(0,T)}(T_{\tilde{\mathbb{K}}_0}^{\mathrm{rad}}(y))\cap \lambda^\perp\\
&= T_{\tilde{\mathbb{K}}_0, W_s(0,T)}^{\mathrm{tan}}(y)\cap \lambda^\perp.
\end{align*}
The space $W_s(0,T)$ is also a Dirichlet space in the sense of \cite[Definition 3.1]{MR0423155} on the set $[0,T]\times \bar \Omega$ and so, due to the characterisation of the tangent cone in \cite[Lemma 3.2]{MR0423155}, we find
\begin{align*}
\cl_{W_s(0,T)}(T_{\tilde{\mathbb{K}}_0}^{\mathrm{rad}}(y) \cap \lambda^\perp) 
&= \{ z \in W_s(0,T) : z \geq 0 \text{ $W_s(0,T)$-q.e. in $\{\bar y=0\}$}\} \cap \lambda^\perp.
\end{align*}
\end{proof}
\subsection{Directional differentiability for VIs}
We now specialise to the case where the pivot space $H$ is a Lebesgue space, a restriction which is needed for the results of \cite{Christof}.
\begin{ass}\label{ass:onSpaces}Set $H:=L^2(\Omega, \mu)$ where $(\Omega, \Sigma, \mu)$ is a complete measure space and let $V \subset H$ be a separable Hilbert space.
\end{ass}
Theorem 4.1 of \cite{Christof} states that for $w_0 \in V_+$, the map $\bar{S}_{w_0}\colon L^2(0,T;H) \to L^p(0,T;H)$ (defined in \S \ref{sec:transformation}) is directionally differentiable for all $p \in [1,\infty)$, i.e.,  
\begin{equation}\label{eq:S0Formula}
\bar{S}_{w_0}(g+sd) = \bar{S}_{w_0}(g) + s\bar{S}_{w_0}'(g)(d) + o(s,d;g)
\end{equation}
where $s^{-1}o(s) \to 0$ in $L^p(0,T;H)$ as $s \to 0^+$, and $\delta:=\bar{S}_{w_0}'(g)(d) \in L^2(0,T;V) \cap L^\infty(0,T;H)$ satisfies, with $w=\bar{S}_{w_0}(g)$, the inequality
\begin{equation}\label{eq:christoffVI}
\begin{aligned}
&\delta  \in T_{\mathbb{K}_0,L^2(0,T;V)}^{\mathrm{tan}}(w) \cap [w' + Aw - g]^\perp : \int_0^T  \langle \varphi' + A\delta - d, \delta-\varphi \rangle  \leq \frac 12 \norm{\varphi(0)}{H}^2\\
&\qquad \qquad \qquad \qquad \qquad \qquad \qquad  \forall \varphi \in \cl_{W(0,T)}(T_{\mathbb{K}_0}^{\mathrm{rad}}(w)\cap [w'+Aw-g]^\perp).
\end{aligned}
\end{equation}
Using this, we shall first work to deduce a differentiability formula for the map $S$ under perturbations of the right-hand side source with a fixed obstacle.
\begin{remark}\label{rem:basePoints}
Our notation emphasises the fact that the higher-order term in \eqref{eq:S0Formula} depends on the base point $g$. This is important because the behaviour of the higher-order terms is in general unknown with respect to the base points, such as for example whether there is any kind of uniformity of the convergence of the higher-order terms on compact or bounded subsets of the base points. Such uniform convergence does hold in cases where the map has more smoothness, namely if it possesses the so-called \emph{uniform Hadamard differentiability} property, but it is not clear whether this is the case for us when such issues become relevant in \S \ref{sec:otherApproaches}.

This is in stark contrast to the dependence on the direction: we know that the terms converge uniformly on compact subsets of the direction since $\bar{S}$ is Hadamard (and hence compactly) differentiable. 
\end{remark}
If $d(s) \to d$, we write
\begin{equation}\label{eq:S0HadamardFormula}
\bar{S}_{w_0}(g+sd(s)) = \bar{S}_{w_0}(g) + s\bar{S}_{w_0}'(g)(d) + \hat o(s,d, s(d(s)-d); g).
\end{equation}
Let us see why $\hat o$ above is a higher-order term. Let $h\colon (0,1) \to L^2(0,T;H)$ and take $d(s) = d + s^{-1}h(s)$ and $p \in [1,\infty)$. Subtracting \eqref{eq:S0Formula} from \eqref{eq:S0HadamardFormula}, we obtain from the Lipschitz nature of $\bar{S}$,
\begin{align*}
\norm{\hat o(s,d,h(s);g) - o(s,d;g)}{L^p(0,T;H)} &= \norm{\bar{S}(g+s(d + s^{-1}h(s)))-\bar{S}(g+sd)}{L^p(0,T;H)}\\
&\leq 2T^{\frac 1p}\norm{h(s)}{L^1(0,T;H)}\tag{by \eqref{eq:S0LipschitzLinfty1}}\\
&\leq 2T^{\frac 1p + \frac 12}\norm{h(s)}{L^2(0,T;H)}
\end{align*}
using $L^2(0,T;H) \cts L^1(0,T;H)$. 
The estimate 
\begin{align}
\norm{\hat o(s,d, h(s);g)}{L^p(0,T;H)} &\leq \frac{T^{\frac 1p}}{\sqrt{C_a}}\norm{h(s)}{L^2(0,T;V^*)} + \norm{o(s,d;g)}{L^p(0,T;H)}\label{eq:hatOBoundIntoL2VStar}
\end{align}
follows from applying instead the Lipschitz estimate \eqref{eq:S0LipschitzWeirdSpaces2} to the first line of the above calculation. 

The next proposition guarantees (under certain assumptions) the directional differentiability of one or both of the maps
\[S_{z_0}(\cdot,\psi)\colon L^2(0,T;H_+) \to L^p(0,T;H) \quad\text{and}\quad S_{z_0}(\cdot,\psi)\colon L^2(0,T;H) \to L^p(0,T;H).\] 
\begin{prop}\label{prop:hadamardDifferentiabilityForPVI}
Let $f,d \in L^2(0,T;H)$, $\psi \in L^2(0,T;V)$ with $\Phi(\psi) \in W_{\mathrm r}(0,T)$ and let \eqref{ass:PhiAtPsiIsIncreasingInTime} and  \eqref{ass:InitialDataBoundedAboveByObstacleAtZero} hold. 
Then
\begin{align}\label{eq:relationS0AndSDerivatives}
S_{z_0}(f+sd,\psi) = S_{z_0}(f,\psi) + s\partial S_{z_0}(f,\psi)(d) + h(s,d)\quad \text{in $W_{\mathrm r}(0,T)$}
\end{align}
where, with  $w_0 := \Phi(\psi)(0) - z_0$,
\begin{align}
\partial S_{z_0}(f,\psi)(d) := \bar{S}_{w_0}'({L}\Phi(\psi)-f)(d) \label{eq:relation2}
\end{align}
belongs to $L^2(0,T;V) \cap L^\infty(0,T;H)$, and $h$ is a higher-order term in $L^p(0,T;H)$ for $p \in [1,\infty)$ whose convergence is uniform in $d$ on compact subsets of $L^2(0,T;H)$. 

The directional derivative $\alpha:=\partial S_{z_0}(f,\psi)(d)$ satisfies
\begin{equation}\label{eq:myVI}
\begin{aligned}
&\alpha  \in T_{\mathbb{K}_0,L^2(0,T;V)}^{\mathrm{tan}}(w) \cap [w' + Aw - ({L}\Phi(\psi)-f)]^\perp : \\
&\qquad\qquad\qquad \int_0^T  \langle \varphi' + A\alpha - d, \alpha-\varphi \rangle  \leq \frac 12 \norm{\varphi(0)}{H}^2\\
&\quad \quad \quad \quad \quad \quad \quad \quad  \quad \quad \quad \forall \varphi \in \cl_{W}(T_{\mathbb{K}_0}^{\mathrm{rad}}(w)\cap [w'+Aw-({L}\Phi(\psi)-f)]^\perp),\\
&w=\bar{S}_{w_0}({L}\Phi(\psi)-f) = \Phi(\psi) - S_{z_0}(f,\psi).
\end{aligned}
\end{equation}
If additionally, for $s \geq 0,$
\begin{empheq}[left={\empheqlbrace}]{align}
&\text{$f+sd \geq 0$ and is increasing, $z_0 \geq 0$},\tag{$\text{D}_\text{s}$}\label{ass:perturbedDataNonnegativeAndInc}\\
\nonumber &\text{\eqref{ass:NEWASSPHIPSI}}, \\
\nonumber &\text{\eqref{ass:InitialDataBoundedAboveByObstacleAtZero}},\\
\nonumber &\text{\eqref{ass:PhiIsNonNegativeAtPsi}},\\
&\langle Az_0 - f(t)-sd(t), v\rangle \leq 0\text{ for all $v \in V_+$ and a.e. $t$,}\label{ass:NEWASSPHIPSIPERTURBED}
\end{empheq}
then \eqref{eq:relationS0AndSDerivatives} holds in $W_s(0,T)\cap W_{\mathrm r}(0,T).$
\end{prop}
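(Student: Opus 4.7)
The plan is to reduce the directional differentiability of $S_{z_0}(\cdot,\psi)$ to the already established differentiability of the zero-obstacle mapping $\bar{S}_{w_0}$ (equations \eqref{eq:S0Formula}--\eqref{eq:christoffVI}) by invoking the identity from Proposition \ref{prop:conditionsForFormulaToHold}, namely
\[
S_{z_0}(g,\psi) = \Phi(\psi) - \bar{S}_{w_0}({L}\Phi(\psi)-g)\qquad\text{in }W_{\mathrm r}(0,T).
\]
The hypotheses needed for this representation---\eqref{ass:PhiAtPsiIsIncreasingInTime}, \eqref{ass:InitialDataBoundedAboveByObstacleAtZero} and $\Phi(\psi)\in W_{\mathrm r}(0,T)$---do not involve the source term, so the identity applies equally at $g=f$ and at $g=f+sd$, and $w_0=\Phi(\psi)(0)-z_0$ is non-negative by \eqref{ass:InitialDataBoundedAboveByObstacleAtZero}.

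Writing $g^{*}:={L}\Phi(\psi)-f$ and subtracting the two identities gives
\[
S_{z_0}(f+sd,\psi) - S_{z_0}(f,\psi) \;=\; \bar{S}_{w_0}(g^{*}) - \bar{S}_{w_0}(g^{*} - sd),
\]
and the first-order expansion \eqref{eq:S0Formula} of $\bar{S}_{w_0}$ around the base point $g^{*}$ yields exactly the expansion \eqref{eq:relationS0AndSDerivatives} with the identification \eqref{eq:relation2}. The regularity claim $\partial S_{z_0}(f,\psi)(d)\in L^{2}(0,T;V)\cap L^{\infty}(0,T;H)$ is inherited directly from the corresponding property of $\bar{S}'_{w_0}(g^{*})(d)$ stated below \eqref{eq:S0Formula}, and the remainder $h(s,d)$ is precisely the higher-order term from \eqref{eq:S0Formula}, which is $o(s)$ in $L^{p}(0,T;H)$ for every $p\in[1,\infty)$. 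The uniform-on-compact-subsets convergence in $d$ follows from the Hadamard refinement \eqref{eq:S0HadamardFormula} combined with the Lipschitz estimates \eqref{eq:S0LipschitzLinfty1} and \eqref{eq:hatOBoundIntoL2VStar} collected in \S\ref{sec:transformation}. The VI \eqref{eq:myVI} for $\alpha:=\partial S_{z_0}(f,\psi)(d)$ is then a direct substitution of $g=g^{*}$ and $w=\bar{S}_{w_0}(g^{*})=\Phi(\psi)-S_{z_0}(f,\psi)$ into \eqref{eq:christoffVI}.

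For the final clause I would simply observe that the additional hypotheses \eqref{ass:perturbedDataNonnegativeAndInc}, \eqref{ass:NEWASSPHIPSI}, \eqref{ass:InitialDataBoundedAboveByObstacleAtZero}, \eqref{ass:PhiIsNonNegativeAtPsi} and \eqref{ass:NEWASSPHIPSIPERTURBED} are engineered to be exactly the hypotheses of the second half of Proposition \ref{prop:conditionsForFormulaToHold}, uniformly for every admissible $s\ge 0$ (this is the point of stating \eqref{ass:perturbedDataNonnegativeAndInc} and \eqref{ass:NEWASSPHIPSIPERTURBED} with the parameter $s$). Consequently both $S_{z_0}(f,\psi)$ and $S_{z_0}(f+sd,\psi)$ lie in $W_{s}(0,T)$, and so does their difference, upgrading \eqref{eq:relationS0AndSDerivatives} from $W_{\mathrm r}(0,T)$ to $W_{s}(0,T)\cap W_{\mathrm r}(0,T)$. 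The main technical subtlety I anticipate is the uniform-in-direction convergence of the higher-order term: the behaviour of the remainder in \eqref{eq:S0Formula} is a priori only controlled in the direction variable for each fixed base point (as flagged in Remark \ref{rem:basePoints}), so one must carefully feed the direction-and-scale pairs through \eqref{eq:S0HadamardFormula} using the Lipschitz continuity of $\bar{S}_{w_0}$ to obtain the required $L^{p}(0,T;H)$ estimate on compact subsets of $d$; here the base point $g^{*}$ stays fixed, which is what makes the argument clean.
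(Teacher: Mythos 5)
Your proposal is correct and follows essentially the same route as the paper's own (very short) proof: both reduce the claim to the transformation identity of Proposition \ref{prop:conditionsForFormulaToHold} applied with source terms $f$ and $f+sd$, then invoke the expansion \eqref{eq:S0Formula} and the VI characterisation \eqref{eq:christoffVI} for $\bar S_{w_0}$ at the fixed base point ${L}\Phi(\psi)-f$, with the second set of hypotheses feeding into the second half of Proposition \ref{prop:conditionsForFormulaToHold} for the $W_s(0,T)$ regularity. The only caveat is that the subtraction actually yields $-s\bar S'_{w_0}({L}\Phi(\psi)-f)(-d)$ rather than $s\bar S'_{w_0}({L}\Phi(\psi)-f)(d)$ (these agree only if the derivative is linear in the direction), but this sign subtlety is present in the paper's proof as well, so your argument matches the paper's in every respect.
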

\begin{proof}The assumptions imply that Proposition \ref{prop:existenceForParabolicVIObstacle} applies and we obtain existence of the left-hand side and the first term on the right-hand side of \eqref{eq:relationS0AndSDerivatives}. We can via Proposition \ref{prop:conditionsForFormulaToHold} utilise \eqref{eq:relationS0AndS} with the source terms $f$ and $f+sd$ to write $S_{z_0}(f,\psi)$ and $S_{z_0}(f+sd,\psi)$ in terms of $\bar{S}_{w_0}$, and then with the aid of the expansion formula \eqref{eq:S0Formula}, we find
\begin{align*}
S_{z_0}(f+sd, \psi) -S_{z_0}(f,\psi) &= \bar{S}_{w_0}({L}\Phi(\psi)-f) - \bar{S}_{w_0}({L}\Phi(\psi)-f-sd)\\
&= -s\bar{S}_{w_0}'({L}\Phi(\psi)-f)(-d) - o(s,-d;{L}\Phi(\psi)-f).
\end{align*}
\end{proof}
\subsection{Differentiability with respect to the obstacle and the source term}\label{sec:dd}
We clearly need some differentiability for the obstacle mapping to proceed the study further and this comes in the following assumption which we take to stand for the rest of the paper.
\begin{ass}\label{ass:PhiHadamardDifferentiable}
Suppose that $\Phi\colon L^2(0,T;H) \to W(0,T)$ is Hadamard differentiable.
\end{ass}
It is necessary for $\Phi$ to be defined on $L^2(0,T;H)$ and differentiable as stated because it implies the uniform convergence with respect to compact subsets of the direction in $L^2(0,T;H)$ of the difference quotients to the directional derivative of $\Phi$, which is a fact that we will use later in \S \ref{sec:HOTs} in the analysis of some higher-order terms that arise. 

We write, for $\rho(s) \to \rho$,
\begin{align}
\Phi(\psi+s\rho) &= \Phi(\psi) + s\Phi'(\psi)(\rho) + l(s,\rho;\psi),\label{eq:PhiFormula}\\
\Phi(\psi+s\rho(s)) &= \Phi(\psi) + s\Phi'(\psi)(\rho) + \hat l(s,\rho, s(\rho(s)-\rho); \psi)\label{eq:PhiHadamardFormula}.
\end{align}
\begin{remark}
Assumption \ref{ass:PhiHadamardDifferentiable} implies, thanks to $W(0,T) \cts C^0([0,T];H) \cts L^p(0,T;H)$, that 
\begin{equation}\label{ass:PhiIsDifferentiable}
\text{$\Phi \colon L^2(0,T;H) \to L^p(0,T;H)$ is Hadamard differentiable for any $p \in [1,\infty]$}.
\end{equation}
In this section, we could have merely assumed \eqref{ass:PhiIsDifferentiable} instead of Assumption \ref{ass:PhiHadamardDifferentiable} and most results would carry through all the way up to the identification of the term $r$ in Proposition \ref{prop:formulaForSDiff} as a higher-order term (and hence the differentiability)
\end{remark}
Now if $h\colon (0,1) \to L^2(0,T;H)$ satisfies $s^{-1}h(s) \to 0$ as $s \to 0^+$, then from \eqref{ass:PhiIsDifferentiable} and the mean value theorem \cite[\S2, Proposition 2.29]{Penot}, 
\begin{align*}
\norm{\hat l(s,\rho, h(s);\psi) - l(s,h;\psi)}{L^p(0,T;H)} &= \norm{\Phi(\psi+s(\rho + s^{-1}h(s)))-\Phi(\psi+s\rho)}{L^p(0,T;H)}\\
&\leq \sup_{\lambda \in [0,1]}\norm{\Phi'(\psi +s\rho + \lambda h(s))h(s)}{L^p(0,T;H)},
\end{align*}
which leads to the estimate
\begin{equation}\label{eq:hatLBound}
\norm{\hat l(s,\rho, h(s);\psi)}{L^p(0,T;H)} \leq \sup_{\lambda \in [0,1]}\norm{\Phi'(\psi+s\rho + \lambda h(s))h(s)}{L^p(0,T;H)} + \norm{l(s,\rho;\psi)}{L^p(0,T;H)}.
\end{equation}
Recall that $Lv:=v' + Av$.

\begin{lem}\label{lem:AAA}The map ${L}(\Phi(\cdot))\colon L^2(0,T;H) \to L^2(0,T;V^*)$ is Hadamard differentiable with derivative ${L}(\Phi'(\psi)(\rho))$ at the point $\psi$ in direction $\rho$, and its higher-order term ${L}(l(s,\rho;\psi))$ satisfies
\begin{align*}
\norm{{L}\hat l(s,\rho, h(s); \psi) }{L^2(0,T;V^*)} 
&\leq \sup_{\lambda \in (0,1)}\norm{{L}(\Phi'(\psi+s\rho+\lambda h(s))(h(s)))}{L^2(0,T;V^*)}\\
&\quad+ \norm{{L}l(s,\rho; \psi)}{L^2(0,T;V^*)}.
\end{align*}
\end{lem}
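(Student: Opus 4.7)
The plan is to reduce the entire statement to the Hadamard differentiability of $\Phi \colon L^2(0,T;H) \to W(0,T)$ assumed in Assumption 4.2, by exploiting the fact that $L = \partial_t + A$ is a bounded linear operator from $W(0,T)$ into $L^2(0,T;V^*)$. The latter is immediate since $W(0,T) \hookrightarrow H^1(0,T;V^*)$ gives control of $\partial_t v$ in $L^2(0,T;V^*)$, and the boundedness of $A \colon V \to V^*$ together with $W(0,T) \hookrightarrow L^2(0,T;V)$ gives control of $Av$ in $L^2(0,T;V^*)$.

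First I would apply $L$ to the expansion \eqref{eq:PhiHadamardFormula} for $\Phi(\psi + s\rho(s))$, using linearity of $L$ to obtain
\[
L(\Phi(\psi + s\rho(s))) = L(\Phi(\psi)) + s\,L(\Phi'(\psi)(\rho)) + L(\hat l(s,\rho, s(\rho(s)-\rho); \psi)).
\]
Since $L \colon W(0,T) \to L^2(0,T;V^*)$ is continuous, $\|L\hat l\|_{L^2(0,T;V^*)} \leq C\|\hat l\|_{W(0,T)}$, so $s^{-1} L\hat l \to 0$ in $L^2(0,T;V^*)$ whenever $s^{-1}\hat l \to 0$ in $W(0,T)$. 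Hadamard differentiability of $\Phi$ with values in $W(0,T)$ supplies exactly this last convergence (uniformly on compact direction sets), so $L\Phi$ is Hadamard differentiable with derivative $L(\Phi'(\psi)(\rho))$.

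For the quantitative estimate, subtracting the two expansions \eqref{eq:PhiFormula} and \eqref{eq:PhiHadamardFormula} yields the identity
\[
\hat l(s,\rho,h(s);\psi) - l(s,\rho;\psi) = \Phi(\psi + s\rho + h(s)) - \Phi(\psi + s\rho).
\]
Applying the mean value theorem for Hadamard-differentiable maps (\cite[\S2, Proposition 2.29]{Penot}, exactly as the paper does for the $L^p(0,T;H)$ version just above this lemma) and then $L$ gives
\[
L\hat l(s,\rho,h(s);\psi) - Ll(s,\rho;\psi) = \int_0^1 L(\Phi'(\psi + s\rho + \lambda h(s))(h(s)))\, d\lambda.
\]
Passing the $L^2(0,T;V^*)$-norm inside the integral via Minkowski's inequality, bounding the integrand by its supremum over $\lambda \in (0,1)$, and then using the triangle inequality $\|L\hat l\|_{L^2(V^*)} \leq \|L(\hat l - l)\|_{L^2(V^*)} + \|Ll\|_{L^2(V^*)}$ yields exactly the stated bound.

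There is no real obstacle here; the result is essentially the chain rule for Hadamard derivatives applied to the bounded linear operator $L$, with the quantitative bound coming from a straightforward mean value argument that mirrors the derivation of \eqref{eq:hatLBound} already carried out immediately above the lemma in the $L^p(0,T;H)$ topology. The only point requiring any care is that Hadamard (as opposed to Fréchet) differentiability suffices for the mean value theorem in the form used, which is why the Penot reference is invoked rather than a classical MVT.
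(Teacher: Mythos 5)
Your proof is correct and follows essentially the same route as the paper: apply $L$ to the two expansions, use the boundedness of $L\colon W(0,T)\to L^2(0,T;V^*)$ to transfer the higher-order property from $\Phi$ to $L\Phi$, subtract, and invoke the Penot mean value theorem to bound $L\hat l - Ll$ by the supremum term. The only cosmetic difference is that you write the mean value theorem as an exact integral identity before passing to the supremum, whereas the paper (and Penot's Proposition 2.29) gives only the supremum inequality under Hadamard differentiability — but since your final estimate uses only the supremum bound, nothing is lost.
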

\begin{proof}
Applying the operator ${L}$ to \eqref{eq:PhiFormula} we get the following equality in $L^2(0,T;V^*)$:
\[{L}\Phi(\psi+s\rho) = {L}\Phi(\psi) + s{L}(\Phi'(\psi)(\rho)) + {L}l(s, \rho; \psi).\]
Due to the estimate
\begin{align*}
\norm{{L}l(s,\rho;\psi)}{L^2(0,T;V^*)} 
&\leq \norm{\partial_t l(s,\rho;\psi)}{L^2(0,T;V^*)} + C_b\norm{l(s,\rho;\psi)}{L^2(0,T;V)},
\end{align*}
we see that ${L}\Phi$ is Hadamard differentiable in the stated spaces since $\Phi\colon L^2(0,T;H) \to W(0,T)$ is Hadamard differentiable. Subtracting the expansion
\begin{align*}
{L}\Phi(\psi+s\rho+h(s)) &= {L}\Phi(\psi) + s{L}\Phi'(\psi)(\rho) + {L}\hat l(s,\rho, h(s); \psi)
\end{align*}
from the equality above, we obtain
\begin{align*}
{L}\hat l(s,\rho, h(s); \psi) - {L}l(s,\rho;\psi) &= {L}(\Phi(\psi+s\rho+h(s))- \Phi(\psi+s\rho)).
\end{align*}
Since the composite mapping ${L}\Phi \colon L^2(0,T;H) \to L^2(0,T;V^*)$ is Hadamard differentiable, the mean value theorem applied to the right-hand side above implies
\begin{align*}
\norm{{L}\hat l(s,\rho, h(s); \psi) - {L}l(s,\rho; \psi)}{L^2(0,T;V^*)} 
&\leq \sup_{\lambda \in (0,1)}\norm{{L}(\Phi'(\psi+s\rho+\lambda h(s))(h(s)))}{L^2(0,T;V^*)}.
\end{align*}
\end{proof}
\begin{prop}\label{prop:formulaForSDiff}
Assume \eqref{ass:PhiAtPsiIsIncreasingInTime} and let $f, d \in L^2(0,T;H)$, $\psi, \rho \in L^2(0,T;V)$ and $h\colon (0,1) \to L^2(0,T;V)$ with $h(0) = 0$ be such that, for $s \geq 0$,
\begin{align}
\nonumber &t \mapsto \Phi(\psi+s\rho + h(s))(t) \text{ is increasing},\\
&\Phi(\psi+s\rho+h(s)), \Phi'(\psi)(\rho) 
\in W_{\mathrm r}(0,T),\label{ass:LPhiObstacleAndPerutbredInL2H}\\
&z_0 \leq \Phi(\psi+s\rho + h(s))(0)\label{ass:perturbedObstacleInWszo}.
\end{align}
Then
\[S_{z_0}(f+sd, \psi + s\rho + h(s)) = S_{z_0}(f,\psi) + sS_{z_0}'(f,\psi)(d,\rho) + r(s,\rho,h(s);\psi)\]
holds in $W_{\mathrm r}(0,T)$ where 
\begin{align*}
S_{z_0}'(f,\psi)(d,\rho)&:= \Phi'(\psi)(\rho) + \partial S_{z_0}(f,\psi)(d-{L}\Phi'(\psi)(\rho)),\\
r(s,\rho,h;\psi) &:= \hat l(s,\rho, h;\psi) -\hat o(s,{L}\Phi'(\psi)(\rho)-d, {L}l(s,\rho,h;\psi); {L}\Phi(\psi)-f),
\end{align*}
and $\alpha:=S_{z_0}'(f,\psi)(d,\rho) \in \Phi'(\psi)(\rho) + L^2(0,T;V) \cap L^\infty(0,T;H)$ satisfies the VI
\begin{equation*}
\begin{aligned}
&\alpha - \Phi'(\psi)(\rho)\in T_{\mathbb{K}_0,L^2}^{\mathrm{tan}}(\Phi(\psi)-y) \cap [y' + Ay - f]^\perp :\\
&\qquad  \int_0^T  \langle \varphi' + A\alpha- d, \alpha-\varphi \rangle \leq \frac 12 \norm{\varphi(0) - \Phi'(\psi)(\rho)(0)}{H}^2 \\
&\qquad  \forall \varphi \in L^p(0,T;H) : \varphi - \Phi'(\psi)(\rho) \in \cl_{W}(T_{\mathbb{K}_0}^{\mathrm{rad}}(\Phi(\psi)-y)\cap [y'+Ay-f]^\perp),\\
&y:=S_{z_0}(f,\psi).
\end{aligned}
\end{equation*}
If additionally
\begin{empheq}[left={\empheqlbrace}]{align}\nonumber 
&\eqref{ass:perturbedDataNonnegativeAndInc},\\
&\Phi(\psi+s\rho + h(s)) \in C^0([0,T];V)\text{ and }\Phi(\psi+s\rho + h(s)) \geq 0\label{ass:perturbedObstacleNonNegative},\\
&\langle Az_0 - f(t) - sd(t), v \rangle \leq 0 \text{ for all } v \in V_+\text{ a.e. $t$}\label{ass:perturbedPhiAtZeroIsGEQID},
\end{empheq}
then the formula above holds in $W_s(0,T)$.

\noindent If also for $p \in [1,\infty)$,
\begin{empheq}[left={\empheqlbrace}]{equation}\label{ass:PhiDerivativeConvergenceInLpH}
\begin{aligned}
&\sup_{\lambda \in [0,1]}\frac{\norm{\Phi'(\psi + s\rho + \lambda h(s))h(s)}{L^p(0,T;H)}}{s} \to 0 \quad \text{ as $s \to 0^+$},\\
&\sup_{\lambda \in (0,1)}\frac{\norm{{L}(\Phi'(\psi+s\rho+\lambda h(s))(h(s)))}{L^2(0,T;V^*)}}{s} \to 0 \quad \text{ as $s \to 0^+$},
\end{aligned}
\end{empheq}
then 
\[\frac{r(s,\rho,h(s);\psi)}{s} \to 0 \quad \text{in $L^p(0,T;H)$ as $s \to 0$},\]
that is, $S_{z_0}\colon L^2(0,T;H) \times L^2(0,T;V) \to L^p(0,T;H)$ is directionally differentiable.


\end{prop}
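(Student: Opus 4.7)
The strategy is to reduce everything to the known directional differentiability of the zero-obstacle map $\bar S$, by using Proposition \ref{prop:conditionsForFormulaToHold} to express $S_{z_0}$ in terms of $\bar S$ and $\Phi$, and then chaining the Hadamard expansions of $\Phi$ and $L\Phi$ with the expansion of $\bar S$.

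First, I would check that the hypotheses of Proposition \ref{prop:conditionsForFormulaToHold} are met at both the base point $(f,\psi)$ and the perturbed point $(f+sd, \psi+s\rho+h(s))$: the monotonicity in time of the two obstacles, the compatibility $z_0 \leq \Phi(\cdot)(0)$ via \eqref{ass:InitialDataBoundedAboveByObstacleAtZero} and \eqref{ass:perturbedObstacleInWszo}, and the $W_{\mathrm r}(0,T)$-regularity of $\Phi$ applied to both arguments. Setting $w_0 := \Phi(\psi)(0) - z_0$, this yields
\begin{align*}
S_{z_0}(f,\psi) &= \Phi(\psi) - \bar S_{w_0}(L\Phi(\psi)-f), \\
S_{z_0}(f+sd,\psi+s\rho+h(s)) &= \Phi(\psi+s\rho+h(s)) - \bar S_{w_0(s)}(L\Phi(\psi+s\rho+h(s)) - f - sd),
\end{align*}
with $w_0(s) := \Phi(\psi+s\rho+h(s))(0) - z_0$. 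Subtracting decomposes the difference into a $\Phi$-part and a $\bar S$-part.

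For the $\Phi$-part, the Hadamard expansion \eqref{eq:PhiHadamardFormula} gives $\Phi(\psi+s\rho+h(s)) - \Phi(\psi) = s\Phi'(\psi)(\rho) + \hat l(s,\rho,h(s);\psi)$. For the $\bar S$-part, Lemma \ref{lem:AAA} rewrites the argument as
\[(L\Phi(\psi)-f) + s(L\Phi'(\psi)(\rho) - d) + L\hat l(s,\rho,h(s);\psi),\]
and I then apply \eqref{eq:S0HadamardFormula} to $\bar S_{w_0}$ at base source $L\Phi(\psi)-f$, direction $L\Phi'(\psi)(\rho) - d$, with $s^{-1}L\hat l$ absorbing the role of $d(s)-d$. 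Combining these two expansions reads off the claimed identity with $S'_{z_0}(f,\psi)(d,\rho) = \Phi'(\psi)(\rho) + \partial S_{z_0}(f,\psi)(d - L\Phi'(\psi)(\rho))$ and $r = \hat l - \hat o(s, L\Phi'(\psi)(\rho)-d, L\hat l; L\Phi(\psi)-f)$.

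The VI for $\alpha$ is obtained by substituting the ansatz $\alpha - \Phi'(\psi)(\rho) = \partial S_{z_0}(f,\psi)(d - L\Phi'(\psi)(\rho))$ into the VI \eqref{eq:myVI} from Proposition \ref{prop:hadamardDifferentiabilityForPVI} (with the direction $d$ replaced by $d - L\Phi'(\psi)(\rho)$), and then shifting the test function by $\Phi'(\psi)(\rho)$; the algebraic identities $A\alpha - A\Phi'(\psi)(\rho) - (d - L\Phi'(\psi)(\rho)) = A\alpha - d + \Phi'(\psi)(\rho)'$ convert the VI into the stated one. Higher regularity in $W_s(0,T)$ under \eqref{ass:perturbedDataNonnegativeAndInc}--\eqref{ass:perturbedPhiAtZeroIsGEQID} follows from the first branch of Proposition \ref{prop:existenceForParabolicVIObstacle}. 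Finally, under \eqref{ass:PhiDerivativeConvergenceInLpH}, the estimate \eqref{eq:hatLBound} together with the first line of \eqref{ass:PhiDerivativeConvergenceInLpH} and the Hadamard differentiability of $\Phi$ give $s^{-1}\hat l \to 0$ in $L^p(0,T;H)$; and \eqref{eq:hatOBoundIntoL2VStar} combined with Lemma \ref{lem:AAA} and the second line of \eqref{ass:PhiDerivativeConvergenceInLpH} give $s^{-1}\hat o \to 0$ in $L^p(0,T;H)$, so that $s^{-1}r \to 0$ in $L^p(0,T;H)$.

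The main obstacle is the bookkeeping across three different remainder terms ($l$ for $\Phi$, its image $Ll$ for $L\Phi$, and $\hat o$ for $\bar S$) in three different function spaces, each of which must be controlled simultaneously; a secondary subtlety is that the initial datum $w_0(s)$ of $\bar S$ in the perturbed representation depends on $s$, but this dependence is precisely the contribution encoded in $\hat l(s,\rho,h(s);\psi)(0)$ and is absorbed into the obstacle-side remainder rather than appearing as a separate correction, which is why routing through Proposition \ref{prop:conditionsForFormulaToHold} is the right mechanism for the reduction.
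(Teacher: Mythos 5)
Your proposal is correct and follows essentially the same route as the paper: rewrite the base and perturbed solutions via Proposition \ref{prop:conditionsForFormulaToHold}, chain the Hadamard expansions \eqref{eq:PhiHadamardFormula}, Lemma \ref{lem:AAA} and \eqref{eq:S0HadamardFormula} to read off $S_{z_0}'(f,\psi)(d,\rho)$ and $r$, obtain the VI for $\alpha$ by the shift $\varphi = \Phi'(\psi)(\rho)+z$ in \eqref{eq:myVI}, get the extra regularity from Proposition \ref{prop:existenceForParabolicVIObstacle}, and control $r$ with \eqref{eq:hatLBound}, \eqref{eq:hatOBoundIntoL2VStar} and \eqref{ass:PhiDerivativeConvergenceInLpH}. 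The one inaccuracy is your closing remark about the $s$-dependent initial datum: $w_0(s)-w_0 = s\Phi'(\psi)(\rho)(0) + \hat l(s,\rho,h(s);\psi)(0)$ contains a first-order piece and is not ``precisely'' the contribution of $\hat l(\cdot)(0)$, so it cannot simply be absorbed into a higher-order remainder as you claim — the paper itself keeps the subscript $w_0$ fixed throughout (using only that $h(0)=0$ makes the $s=0$ hypotheses follow from the perturbed ones), so this does not separate your argument from the paper's, but the justification you give for the absorption is not the right one.
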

\begin{proof}
Due to assumptions \eqref{ass:LPhiObstacleAndPerutbredInL2H} and \eqref{ass:perturbedObstacleInWszo}, the left-hand side of the expansion formula to be proved can be written using  \eqref{eq:relationS0AndS} in  Proposition \ref{prop:conditionsForFormulaToHold}:
\begin{equation}\label{eq:0}
S_{z_0}(f+sd, \psi + s\rho + h(s)) = \Phi(\psi+s\rho+h(s)) - \bar{S}_{w_0}({L}(\Phi(\psi + s\rho + h(s)))-f-sd),
\end{equation}
where $w_0 = \Phi(\psi+s\rho+h(s))(0)-z_0$. The first term on the right-hand side here can be expanded through the formula \eqref{eq:PhiHadamardFormula} for $\Phi$:
\begin{equation}
\Phi(\psi+s\rho+h(s)) = \Phi(\psi) + s\Phi'(\psi)(\rho) + \hat l(s,\rho, h(s); \psi)\label{eq:2}.
\end{equation}
This is an equality in $L^p(0,T;H)$ for all $p$ (and in fact in $W(0,T)$ by assumption). Note that \eqref{ass:LPhiObstacleAndPerutbredInL2H} implies that we can apply ${L}$ to all terms in \eqref{eq:2} and doing so yields
\[{L}\Phi(\psi+s\rho+h(s)) = {L}\Phi(\psi) + s{L}\Phi'(\psi)(\rho) + {L}\hat l(s,\rho, h(s); \psi) \in L^2(0,T;H).\]
Using this and the expansion formula \eqref{eq:S0HadamardFormula} for $\bar{S}$, the second term on the right-hand side of \eqref{eq:0} becomes
\begin{align}
\nonumber &\bar{S}_{w_0}({L}(\Phi(\psi + s\rho + h(s)))-f-sd) \\
\nonumber &\quad= \bar{S}_{w_0}({L}\Phi(\psi) - f) + s\bar{S}_{w_0}'({L}\Phi(\psi) - f)({L}\Phi'(\psi)(\rho) - d)\\
&\quad\quad  + \hat o (s,{L}\Phi'(\psi)(\rho) - d, {L}\hat l(s,\rho, h(s);\psi); {L}\Phi(\psi)-f),\label{eq:3}
\end{align}
where the second equality holds since every term inside $\bar{S}_{w_0}$ on the left-hand side is in $L^2(0,T;H)$ and so \eqref{eq:S0Formula} applies. Now, plugging \eqref{eq:2} and \eqref{eq:3} into \eqref{eq:0} we find
\begin{align*}
&S_{z_0}(f+sd, \psi + s\rho + h(s))\\
&= \Phi(\psi)+s\Phi'(\psi)(\rho)+\hat l(s,\rho,h(s);\psi) - \bar{S}_{w_0}({L}\Phi(\psi) - f)\\
&\quad - s\bar{S}_{w_0}'({L}\Phi(\psi) - f)({L}\Phi'(\psi)(\rho) - d)\\
&\quad -\hat o(s,{L}\Phi'(\psi)(\rho)-d, {L}\hat l(s,\rho,h(s);\psi); {L}\Phi(\psi)-f)\\
&= S_{z_0}(f,\psi) +s(\Phi'(\psi)(\rho)+\partial S_{z_0}(f,\psi)(d-{L}\Phi'(\psi)(\rho)))+\hat l(s,\rho,h(s);\psi)\\
&\quad -\hat o(s,{L}\Phi'(\psi)(\rho)-d, {L}\hat l(s,\rho,h(s);\psi); {L}\Phi(\psi)-f),
\end{align*}
where for the final equality, we again used the formula \eqref{eq:relationS0AndS} which is applicable because \eqref{ass:perturbedObstacleInWszo} implies that $z_0 \leq \Phi(\psi)(0)$, and we used the relation \eqref{eq:relation2} between the directional derivatives of $\bar{S}$ and $S$:
\[\bar{S}_{w_0}'({L}\Phi(\psi)-f)({L}\Phi'(\psi)(\rho)-d)=-\partial S_{z_0}(f,v)(d-{L}\Phi'(\psi)(\rho)).\] We then set $\alpha:= \Phi'(\psi)(\rho)+\partial S(f,\psi)(d-{L}\Phi'(\psi)(\rho))$. From \eqref{eq:christoffVI}, \eqref{eq:relationS0AndSDerivatives}, \eqref{eq:myVI}, the function $\delta :=\partial S(f,\psi)(d-{L}\Phi'(\psi)(\rho)) \in L^2(0,T;V)\cap L^\infty(0,T;H)$ satisfies 
\begin{equation*}
\begin{aligned}
&\delta\in T_{\mathbb{K}_0,L^2}^{\mathrm{tan}}(w) \cap [w' + Aw - ({L}\Phi(\psi)-f)]^\perp : \\
&\qquad\int_0^T  \langle \varphi' + A\delta - (d-{L}\Phi'(\psi)(\rho)), \delta-\varphi \rangle \leq \frac 12 \norm{\varphi(0)}{H}^2 \\
&\qquad\qquad\qquad\qquad\qquad\qquad\forall \varphi \in \cl_{W}(T_{\mathbb{K}_0}^{\mathrm{rad}}(w)\cap [w'+Aw-({L}\Phi(\psi)-f)]^\perp),
\end{aligned}
\end{equation*}
where $w=\bar{S}({L}\Phi(\psi)-f) = \Phi(\psi)-S(f,\psi)$.
Recalling the definition of $\alpha$ and making the substitution $\varphi := \Phi'(\psi)(\rho)+ z$ in the above variational formulation for $\delta$ yields the formulation for $\alpha$ stated in the proposition. If additionally \eqref{ass:perturbedDataNonnegativeAndInc}, \eqref{ass:perturbedPhiAtZeroIsGEQID}, \eqref{ass:perturbedObstacleNonNegative}, then Proposition \ref{prop:conditionsForFormulaToHold} gives the stated regularity.


Dropping now the dependence on the base points for clarity, we estimate the remainder term $r$ (which is defined as in the statement of this proposition) as follows, making use of  \eqref{eq:hatOBoundIntoL2VStar} and Lemma \ref{lem:AAA},
\begin{align*}
&\norm{r(s,\rho,h(s))}{L^p(0,T;H)} \\
&\quad\leq \norm{\hat l(s,\rho,h(s))}{L^p(0,T;H)} + \norm{\hat o(s,{L}\Phi'(\psi)(\rho)-d, {L}\hat l(s,\rho,h(s)))}{L^p(0,T;H)}\\
&\quad\leq \sup_{\lambda \in [0,1]}\norm{\Phi'(\psi + s\rho + \lambda h(s))h(s)}{L^p(0,T;H)} + \norm{l(s,\rho)}{L^p(0,T;H)}\\
&\quad\quad + \frac{T^{\frac 1p}}{\sqrt{C_a}}\norm{{L}\hat l(s,\rho,h(s))}{L^2(0,T;V^*)} + \norm{o(s,{L}\Phi'(\psi)(\rho)-d)}{L^p(0,T;H)}\\
&\quad\leq \sup_{\lambda \in [0,1]}\norm{\Phi'(\psi + s\rho + \lambda h(s))h(s)}{L^p(0,T;H)} + \norm{l(s,\rho)}{L^p(0,T;H)}\\
&\quad\quad + \frac{T^{\frac 1p}}{\sqrt{C_a}}\left(\sup_{\lambda \in (0,1)}\norm{{L}(\Phi'(\psi+s\rho+\lambda h(s))(h(s)))}{L^2(0,T;V^*)} + \norm{{L}l(s,\rho)}{L^2(0,T;V^*)} 
\right)\\
&\quad\quad + \norm{o(s,{L}\Phi'(\psi)(\rho)-d)}{L^p(0,T;H)}.
\end{align*}
Dividing by $s$ and taking the limit $s \to 0^+$, we see that the remainder term vanishes in the limit due to assumption \eqref{ass:PhiDerivativeConvergenceInLpH}.

Furthermore the convergence to zero is uniform in $d$ on compact subsets since $d$ appears only in the final term which we know has the same property as $S(\cdot, \psi)$ is Hadamard differentiable.
\end{proof}
\begin{remarks}
The assumption $h(0)=0$ in the proposition implies that all assumptions that hold for the perturbed data also hold for the non-perturbed data (i.e. at $s=0$). Without this assumption, we would have to assume in addition $\Phi(\psi) \in W_{\mathrm r}(0,T)$ and \eqref{ass:PhiIsNonNegativeAtPsi} and \eqref{ass:dataNonnegativeAndfInc} along with \eqref{ass:perturbedObstacleNonNegative}.
\end{remarks}
\section{Directional differentiability}\label{sec:DDMain}
Fix $f,d \in L^2(0,T;H)$. We begin by choosing an element of $\mathbf{P}(f)$ with sufficient regularity as described in the following assumption.
\begin{ass}\label{ass:onU}
Take $u_0 \in V_+$ and let $u \in \mathbf{P}_{u_0}(f) \cap W(0,T)$ be such that $t \mapsto \Phi(u)(t)$ is increasing.
\end{ass}
Refer to Theorems \ref{thm:discretisationExistence} and \ref{thm:approximationOfQVIByVIIterates} which give conditions for the existence of such a $u \in \mathbf{P}_{u_0}(f)$ and for the increasing property of $t \mapsto u(t)$; if $\Phi$ then preserves the increasing-in-time property then one would have the satisfaction of this assumption.

Picking $u \in \mathbf{P}_{u_0}(f)$ satisfying Assumption \ref{ass:onU}, define the sequence 
\begin{equation*}\label{eq:sequenceuns}
\begin{aligned}
u^n_s &:= S_{u_0}(f+sd, u^{n-1}_s) \quad\text{ for } n = 1, 2, 3, ...,\\
u^0_s &:= u.
\end{aligned}
\end{equation*}
Our aim will be to apply Theorem \ref{thm:approximationOfQVIByVIIterates} or Theorem \ref{thm:approximationOfQVIByVIIteratesDec} to this sequence in order to show that, under additional assumptions, it is well defined and has the right convergence properties. Furthermore, we also want to obtain expansion formulae for each $u^n_s$.
\subsection{Expansion formula for the VI iterates}
The following sets of assumptions are to ensure that Theorem \ref{thm:approximationOfQVIByVIIterates} (or Theorem \ref{thm:approximationOfQVIByVIIteratesDec}) can be applied for our sequence $\{u_s^n\}_{n \in \mathbb{N}}$.
\begin{ass}\label{ass:newAss}
Assume
\begin{align}
&d \leq 0 \text{ or } d \geq 0\label{ass:signOnTheDirection},\\
&\Phi'(u)\colon W_{\mathrm r}(0,T) + L^2(0,T;V) \cap L^\infty(0,T;H) \to  W_{\mathrm r}(0,T),\tag{L2}\label{ass:inL2VLinftyHimples}\\
\nonumber &\intertext{if $\rho \in L^2(0,T;V),$ and  $h\colon (0,1) \to L^2(0,T;H)$ is a higher-order term, then for some $p \in [1,\infty)$, as $s \to 0^+$,} 
&\qquad\qquad\begin{cases}\sup_{\lambda \in [0,1]}s^{-1}\norm{\Phi'(u + s\rho + \lambda h(s))h(s)}{L^p(0,T;H)} \to 0, \\
\sup_{\lambda \in [0,1]}s^{-1}\norm{{L}(\Phi'(u+s\rho+\lambda h(s))(h(s)))}{L^2(0,T;V^*)} \to 0,
\end{cases}\tag{L3}\label{ass:LFour}
\end{align}
and either 
\begin{empheq}[left={\empheqlbrace}]{align}
\nonumber &\eqref{ass:perturbedDataNonnegativeAndInc}, 
\eqref{ass:PhiNonNegativeForNonNegativeObs}, \eqref{ass:completeContinuityOfPhi},\\
&\langle Au_0 - f(t)- sd(t), v \rangle \leq 0 \text{ $\forall v \in V_+$ and for a.e. $t$,}\label{ass:NEWASSconditionOnIDPerturbed}\\
&\Phi \colon W_s(0,T) \to W_{\mathrm r}(0,T) \cap C^0([0,T];V),\label{ass:inWImpliesLPhiInL2H}\tag{O3a}\\
&t \mapsto \Phi(w)(t) \text{ is increasing $\forall w \in L^2(0,T;V_+) : t \mapsto w(t)$ increasing,}\tag{O4a}\label{ass:PhiIsIncreasingInTimeForAllNonNegBochner}\\
&\text{$u \in W_s(0,T)$ with $\Phi(u) \geq 0$,}\tag{L1a}\label{ass:L2}\\
&\text{if $d \leq 0$, $w(0) = u_0 \implies u_0 \leq \Phi(w)(0)\; \forall w \in W_s(0,T)$,}\label{ass:NEWASSFOR}
\end{empheq}
or  
\begin{empheq}[left={\empheqlbrace}]{align}
\nonumber &\eqref{ass:PhiNonNegativeForAllPsiInBochnerSpace}, \eqref{ass:strongerCompleteContinuityOfPhi}, \eqref{ass:T2},\\
&t \mapsto \Phi(w)(t) \text{ is increasing $\forall w \in L^2(0,T;V)$,}\tag{O4b}\label{ass:PhiIsIncreasingInTimeForAllBochner}\\
&\Phi(u) \in W_{\mathrm r}(0,T) \text{ and } \Phi(u)(0)\geq u_0,\tag{L1b}\label{ass:LZero}\\
\nonumber &\text{Either } w \in W_{\mathrm r}(0,T) : w(0) = z_0 \implies  z_0 \leq  \Phi(w)(0)\text{, or, } \\
\nonumber &\text{if $d \geq 0$, }\Phi(v) \leq \Phi(w) \implies  \Phi(v)(0) \leq \Phi(w)(0),\\
&\text{whereas if $d \leq 0$, }  \Phi(v) \geq \Phi(w) \implies  \Phi(v)(0) \leq \Phi(w)(0).
\label{ass:NEWASS22}
\end{empheq}
\end{ass}
\begin{remark}
Regarding \eqref{ass:signOnTheDirection}, observe that if $d \geq 0$, the initial element $u_s^0 = u$ is a subsolution for $\mathbf{P}(f+sd)$ since $u = S(f,u) \leq S(f+sd, u)$ (see also Lemma \ref{lem:subsolutionForPerturbedQVI}) whilst if $d \leq 0$ then $u_s^0$ is instead a supersolution.
\end{remark}

Define $\alpha^1 = \delta^1 := \partial S(f,u)(d)$ and for $n\geq 2$, we make the recursive definitions:
\begin{align}
\nonumber \delta^n &:= \partial S(f,u)(d-{L}\Phi'(u)(\alpha^{n-1})),\\
\alpha^n &:=
\Phi'(u)[\alpha^{n-1}] + \delta^{n},\label{eq:idForAlphaN}\\
o^n(s) &:= r(s,\alpha^{n-1},o^{n-1}(s)).\label{eq:identityforOnAndRn}
\end{align}
To ease the notation on the higher-order terms, we did not write the base point $u$ in the $r$ term (which originates from Proposition \ref{prop:formulaForSDiff}) above. 
We now give two results (with varying assumptions) in the next proposition concerning convergence behaviour and an expansion formula for the sequence $\{u_s^n\}$.

\begin{prop}\label{prop:expansionFormulan}
Let Assumption \ref{ass:newAss} hold. 
Then $\{u_s^n\}_{n \in \mathbb{N}} \subset W(0,T)$ is a well defined non-negative monotone sequence (increasing if $d \geq 0$, decreasing if $d \leq 0$) such that
\begin{equation}\label{eq:eqForqn}
u_s^n = u + s\alpha^n + o^n(s), \quad \text{as $s \downarrow 0$},
\end{equation}
where
\begin{enumerate}[label={(\arabic*)}]
\item $\alpha^n$ satisfies the VI
\begin{equation}
\begin{aligned}
&\alpha^n - \Phi'(u)(\alpha^{n-1}) \in T^{\tan}_{\mathbb{K}_0, L^2}(\Phi(u)-u)\cap [u' + Au - f]^\perp :\\ 
& \qquad\qquad\int_0^T \langle \varphi' + A\alpha^n - d, \alpha^n - \varphi \rangle \leq
 \frac 12 \norm{\varphi(0)-\Phi'(u)\alpha^{n-1}(0)}{H}^2 \\
&   \qquad \qquad  \forall \varphi : \varphi- \Phi'(u)(\alpha^{n-1}) \in \cl_{W}(T^{\mathrm{rad}}_{\mathbb{K}_0}(\Phi(u)-u)\cap [u' + Au - f]^\perp);
\end{aligned}\label{eq:alphanTestFunctionSpace}
\end{equation}
\item ${L}\Phi(u_s^n) \in L^2(0,T;H)$, $\alpha^n-\Phi'(u)(\alpha^{n-1}) \in L^2(0,T;V) \cap L^\infty(0,T;H)$, ${L}\Phi'(u)(\alpha^{n-1}) \in L^2(0,T;H);$ 
\item $s^{-1}o^n(s) \to 0$ in $L^p(0,T;H)$ as $s \to 0^+$ with $p \in [1,\infty)$ from Assumption \ref{ass:newAss};
\item under the first set of assumptions,
\[u_s^n \weaklyto u_s \text{ in $W_s(0,T)$ where $u_s^n \geq 0$ and $u_s \in \mathbf{P}_{u_0}(f+sd)$ is a non-negative  solution};\]
\item under the second set of assumptions, $u_s^n \in W_{\mathrm r}(0,T)$  and
\begin{align*}
&u_s^n \weaklyto u_s \text{ in $L^2(0,T;V)$ and weakly-star in $L^\infty(0,T;H)$ where}\\
&\text{$u_s \in \mathbf{P}_{u_0}(f+sd)$ is a very weak solution.}
\end{align*}
\end{enumerate}
\end{prop}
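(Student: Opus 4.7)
The plan is to prove the expansion \eqref{eq:eqForqn} by induction on $n$, using Proposition \ref{prop:formulaForSDiff} at each step with the base point frozen at $\psi = u$, and then to invoke Theorem \ref{thm:approximationOfQVIByVIIterates} (if $d \geq 0$) or Theorem \ref{thm:approximationOfQVIByVIIteratesDec} (if $d \leq 0$) to obtain the monotonicity of $\{u_s^n\}$ and its convergence to some $u_s \in \mathbf{P}_{u_0}(f+sd)$. Freezing the base point at $u$ is essential: the issue of non-uniform convergence of higher-order terms over varying base points flagged in Remark \ref{rem:basePoints} would otherwise prevent the induction from closing.

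For the base case $n=1$, the fact that $u \in \mathbf{P}_{u_0}(f)$ yields $u = S_{u_0}(f,u)$, and Proposition \ref{prop:hadamardDifferentiabilityForPVI} applied with $\psi = u$ gives
\[u_s^1 = S_{u_0}(f+sd,u) = u + s\,\partial S_{u_0}(f,u)(d) + o^1(s),\]
along with the claimed VI for $\alpha^1$ and the higher-order property of $o^1(s)$ in $L^p(0,T;H)$. For the inductive step, assume $u_s^{n-1} = u + s\alpha^{n-1} + o^{n-1}(s)$ with $s^{-1}o^{n-1}(s) \to 0$ in $L^p(0,T;H)$. Then Proposition \ref{prop:formulaForSDiff} applied with $\psi = u$, $\rho = \alpha^{n-1}$, $h(s) = o^{n-1}(s)$ produces
\begin{align*}
u_s^n &= S_{u_0}(f+sd, u_s^{n-1}) \\
&= u + s\bigl(\Phi'(u)[\alpha^{n-1}] + \partial S_{u_0}(f,u)(d - {L}\Phi'(u)(\alpha^{n-1}))\bigr) + r(s,\alpha^{n-1},o^{n-1}(s);u),
\end{align*}
which by the definitions \eqref{eq:idForAlphaN}--\eqref{eq:identityforOnAndRn} is exactly $u_s^n = u + s\alpha^n + o^n(s)$. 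The VI for $\alpha^n$ is the variational formulation of Proposition \ref{prop:formulaForSDiff} specialised to these choices. The regularity claims in item (2) follow by tracking the images of the maps involved: $\delta^n = \alpha^n - \Phi'(u)(\alpha^{n-1})$ lies in $L^2(0,T;V) \cap L^\infty(0,T;H)$ as the image of $\partial S_{u_0}(f,u)$; assumption \eqref{ass:inL2VLinftyHimples} guarantees ${L}\Phi'(u)(\alpha^{n-1}) \in L^2(0,T;H)$, so the recursion closes and the VI defining $\delta^{n+1}$ is meaningful at the next step. The higher-order property $s^{-1}o^n(s) \to 0$ follows from the bound on $r$ derived inside the proof of Proposition \ref{prop:formulaForSDiff} combined inductively with the bound on $o^{n-1}$ and the uniform convergence hypothesis \eqref{ass:LFour}.

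For items (4) and (5), observe that if $d \geq 0$ then $u^0_s = u$ is a subsolution for $\mathbf{P}(f+sd)$ by Lemma \ref{lem:subsolutionForPerturbedQVI}, and the remaining hypotheses in Assumption \ref{ass:newAss} have been arranged precisely so that Theorem \ref{thm:approximationOfQVIByVIIterates} applies to the iteration starting from $u_s^0 = u$. This yields monotonicity, non-negativity, the stated regularity of each $u_s^n$ (either in $W_s(0,T)$ or in $W_{\mathrm r}(0,T)$), and weak convergence to some $u_s \in \mathbf{P}_{u_0}(f+sd)$. If instead $d \leq 0$, then $u$ is a supersolution and the analogous statements follow from Theorem \ref{thm:approximationOfQVIByVIIteratesDec}.

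The main obstacle will be the bookkeeping of verifying, at each inductive step, that the obstacle $u + s\alpha^{n-1} + o^{n-1}(s)$ meets all the hypotheses required by Proposition \ref{prop:formulaForSDiff}: namely that $\Phi$ of it belongs to $W_{\mathrm r}(0,T)$, that $u_0 \leq \Phi(u+s\alpha^{n-1}+o^{n-1}(s))(0)$, and that $t \mapsto \Phi(u+s\alpha^{n-1}+o^{n-1}(s))(t)$ is increasing. These follow, respectively, from the mapping property \eqref{ass:inWImpliesLPhiInL2H} or \eqref{ass:T2} combined with \eqref{ass:inL2VLinftyHimples}; from \eqref{ass:NEWASSFOR} or \eqref{ass:NEWASS22} together with the inductively propagated condition on $u_s^{n-1}$; and from \eqref{ass:PhiIsIncreasingInTimeForAllNonNegBochner} or \eqref{ass:PhiIsIncreasingInTimeForAllBochner} together with the inductively established monotonicity in time of $u_s^{n-1}$. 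This routine but delicate verification is what makes the statement of Assumption \ref{ass:newAss} so lengthy.
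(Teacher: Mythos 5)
Your proposal is correct and follows essentially the same route as the paper: monotonicity via the comparison principle and the sub/supersolution property of $u_s^0=u$, existence and the expansion \eqref{eq:eqForqn} by induction with the base point of Proposition \ref{prop:formulaForSDiff} frozen at $\psi=u$ (base case via Proposition \ref{prop:hadamardDifferentiabilityForPVI}), regularity tracked through \eqref{ass:inL2VLinftyHimples} and \eqref{ass:inWImpliesLPhiInL2H}/\eqref{ass:T2}, the higher-order property via \eqref{ass:LFour}, and convergence from Theorem \ref{thm:approximationOfQVIByVIIterates} or Theorem \ref{thm:approximationOfQVIByVIIteratesDec}. The bookkeeping you flag at the end (verifying $\Phi(u_s^{n-1})\in W_{\mathrm r}(0,T)$, $u_0\le\Phi(u_s^{n-1})(0)$, and the increasing-in-time property at each step) is exactly what the paper's proof carries out in its two cases.
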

\begin{proof}
Let us first show monotonicity of the sequence assuming existence. First take $d \geq 0$. Then since $u$ is a subsolution, $u \leq S(f,u) \leq S(f+sd, u) = u^1_s$. By the comparison principle, we find again that $u^1_s = S(f+sd, u) \leq S(f+sd, u^1_s) = u^2_s$ and in this we obtain that $\{u^n_s\}$ is an increasing sequence. Likewise if $d \leq 0$, the sequence is decreasing. 

\medskip

\noindent \textsc{1. First case}.  Observe that since $u \leq \Phi(u)$ and $\Phi(u) \in W_s(0,T)$, by \eqref{ass:inWImpliesLPhiInL2H}, we can take the trace at $t=0$ to get $\Phi(u)(0) \geq u_0$. Since \eqref{ass:perturbedDataNonnegativeAndInc}, \eqref{ass:L2} and \eqref{ass:NEWASSconditionOnIDPerturbed} hold and as \eqref{ass:PhiIsNonNegativeAtPsi} is satisfied for the obstacle $u$ (thanks to \eqref{ass:L2} and \eqref{ass:inWImpliesLPhiInL2H}), Proposition \ref{prop:existenceForParabolicVIObstacle} implies that $u_s^1=S(f+sd,u) \in W_s(0,T)$ exists and is increasing in time and non-negative.

Regarding the upper bound for the initial data in terms of $u_s^1$, we argue as follows. For $d \geq 0$, we may apply $\Phi$ to the inequality $u_s^1 \geq S(f,u) = u$ and use the regularity offered by \eqref{ass:inWImpliesLPhiInL2H} to obtain $u_0 \leq \Phi(u)(0) \leq \Phi(u_s^1)(0).$ If $d \leq 0$, we use the condition \eqref{ass:NEWASSFOR} to obtain the same conclusion.  Then making use of \eqref{ass:PhiNonNegativeForNonNegativeObs}, \eqref{ass:inWImpliesLPhiInL2H}, and \eqref{ass:PhiIsIncreasingInTimeForAllNonNegBochner} we apply Proposition \ref{prop:existenceForParabolicVIObstacle} to  obtain the existence for each $u^2_s$  and subsequently, using these arguments, existence for each $u^n_s$. 

\medskip

We now show the expansion formula \eqref{eq:eqForqn} by induction.

\medskip

\noindent \textit{1.1 Base case.} Using Proposition \ref{prop:hadamardDifferentiabilityForPVI} to expand $u^1_s = S(f+sd, u)$, which is applicable due to \eqref{ass:L2}, \eqref{ass:inWImpliesLPhiInL2H}, and the increasing-in-time property and the non-negativity of $u_0$ from Assumption \ref{ass:onU}, we obtain a
$\delta^1 := \partial S(f,u)(d) \in L^2(0,T;V) \cap L^\infty(0,T;H)$ such that
\begin{equation}\label{eq:idForu1}
u^1_s = S(f+sd, u) = u + s\delta^1 + o(s,d),
\end{equation}
where 
\begin{equation*}
\begin{aligned}
&\delta^1  \in T_{\mathbb{K}_0,L^2(0,T;V)}^{\mathrm{tan}}(w) \cap [u' + Au - f]^\perp : \int_0^T  \langle \varphi' + A\delta^1 - d, \delta^1-\varphi \rangle  \leq \frac 12 \norm{\varphi(0)}{H}^2\\
&\quad \quad \quad \quad \quad \quad\quad \quad \quad \quad \quad  \quad \quad \quad \forall \varphi \in \cl_{W}(T_{\mathbb{K}_0}^{\mathrm{rad}}(\Phi(u)-u)\cap [u'+Au-f]^\perp),
\end{aligned}
\end{equation*}
and 
\[o^1=r(\cdot, 0,0)=\hat l(\cdot,0,0) - \hat o(\cdot,-d,-d) = l(\cdot,0)-o(\cdot,-d)\]
is clearly a higher-order term. Finally, assumption \eqref{ass:inWImpliesLPhiInL2H} implies that ${L}\Phi(u_s^1) \in L^2(0,T;H)$, whilst \eqref{ass:inL2VLinftyHimples} gives ${L}(\Phi'(u)(\delta^1)) \in L^2(0,T;H)$. 

\medskip

\noindent \textit{1.2 Inductive step}. Now assume the statement is true for $n=k$.  By definition,
\begin{equation}\label{eq:idForus}
u_s^{k+1} := S(f+sd, u_s^k) = S(f+sd, u + s\alpha^k + o^k(s)).
\end{equation}
This object is again non-negative since $u_s^k \geq 0$ implies that $\Phi(u_s^k) \geq \Phi(0) \geq 0$ by \eqref{ass:PhiNonNegativeForNonNegativeObs}.
We have $u_s^k \in W_s(0,T)$, and thus by \eqref{ass:inWImpliesLPhiInL2H}, $\Phi(u_s^k) \in W_{\mathrm r}(0,T)$, and since $\alpha^k = \Phi'(u)(\alpha^{k-1}) + \delta^k \in W_{\mathrm r}(0,T) + L^2(0,T;V)\cap L^\infty(0,T;H)$, \eqref{ass:inL2VLinftyHimples} implies that $\Phi'(u)(\alpha^k) \in W_{\mathrm r}(0,T)$. Hence \eqref{ass:LPhiObstacleAndPerutbredInL2H} holds for obstacle $u$, direction $\alpha^k$ and higher-order term $o^k(s)$. By \eqref{ass:PhiIsIncreasingInTimeForAllNonNegBochner}, the obstacle $\Phi(u^k_s)$ is increasing in time. Then, since $\Phi$ is increasing, 
\[\Phi(u_k^s)(0) = \Phi(u+s\alpha^k + o^k(s))(0) \geq \Phi(u)(0) \geq u_0.\]
Proposition \ref{prop:formulaForSDiff} can now be applied and we find
\begin{align*}
u^{k+1}_s &= u + s(\Phi'(u)(\alpha^k) + \partial S_{u_0}(f,u)(d-{L}\Phi'(u)(\alpha^k))) + r(s,\alpha^k, o^k(s))\\
&= u+ s(\Phi'(u)(\alpha^k) + \delta^{k+1})) + r(s,\alpha^k, o^k(s))\\
&= u + s\alpha^{k+1} + o^{k+1}(s)
\end{align*}
with $u_s^{k+1} \in W_s(0,T) \cap W_{\mathrm r}(0,T)$ and $\delta^{k+1}=\alpha^{k+1}-\Phi'(u)(\alpha^k) \in L^\infty(0,T;H)\cap L^2(0,T;H)$ (we already argued above that ${L}\Phi'(u)(\alpha^{k}) \in L^2(0,T;H)$), meaning that $\alpha^{k+1} \in W_{\mathrm r}(0,T) + L^2(0,T;V)\cap L^\infty(0,T;H)$ as desired. Under Assumption \eqref{ass:LFour}, by the same argument as in the proof of Proposition \ref{prop:formulaForSDiff}, $o^{k+1}$ is a higher-order term given that $o^k$ is.

Regarding the expression for the derivative, 
we know that $\alpha^{k+1}-\Phi'(u)(\alpha^k) = \partial S(f,u)(d-L\Phi'(u)(\alpha^k))$ solves the VI that appears in Proposition \ref{prop:hadamardDifferentiabilityForPVI}, i.e.,
\begin{equation*}
\begin{aligned}
&\alpha^{k+1}-\Phi'(u)(\alpha^k)  \in T_{\mathbb{K}_0,L^2(0,T;V)}^{\mathrm{tan}}(w) \cap [u' + Au - f]^\perp :\\ &\int_0^T  \langle \phi' + A(\alpha^{k+1}-\Phi'(u)(\alpha^k)) - d + L\Phi'(u)(\alpha^k), \alpha^{k+1}-\Phi'(u)(\alpha^k)-\phi \rangle \\
& \quad \quad \quad \quad \quad\quad \quad \quad \quad \quad \quad \quad  \quad  \quad \quad \quad \quad \quad\quad \quad \quad \quad \quad \quad \quad  \quad  \leq \frac 12 \norm{\phi(0)}{H}^2,\\
&\quad \quad \quad \quad \quad\quad \quad \quad \quad \quad \quad \quad  \quad  \forall \phi \in \cl_{W}(T_{\mathbb{K}_0}^{\mathrm{rad}}(\Phi(u)-u)\cap [u'+Au-f]^\perp),
\end{aligned}
\end{equation*}
whence setting $\varphi:= \phi + \Phi'(u)(\alpha^k)$ yields
\begin{equation*} 
\begin{aligned}
&\alpha^{k+1}-\Phi'(u)(\alpha^k)  \in T_{\mathbb{K}_0,L^2(0,T;V)}^{\mathrm{tan}}(w) \cap [u' + Au - f]^\perp : \\&\int_0^T  \langle \varphi'  + A\alpha^{k+1} - d, \alpha^{k+1}-\varphi \rangle  \leq \frac 12 \norm{\varphi(0)-\Phi'(u)(\alpha^k)(0)}{H}^2\\
&  \quad \quad \quad \quad \quad  \quad \quad \quad \forall \varphi : \varphi -\Phi'(u)(\alpha^k)\in \cl_{W}(T_{\mathbb{K}_0}^{\mathrm{rad}}(\Phi(u)-u)\cap [u'+Au-f]^\perp)
\end{aligned}
\end{equation*}
as desired.

\medskip

\noindent \textsc{2. Second case}. We will not repeat some of the same techniques used in the above case and simply focus on the differences under the different set of assumptions. Due to \eqref{ass:LZero}, $u^1_s$ exists by Proposition \ref{prop:existenceForParabolicVIObstacle}. Using \eqref{ass:NEWASS22} we find $u_0 \leq \Phi(u^1_s)(0)$. The monotonicity of $\{u_s^n\}_{n \in \mathbb{N}}$ and \eqref{ass:NEWASS22} shows this bound for all $u^n_s$. Using  \eqref{ass:T2}, \eqref{ass:PhiIsIncreasingInTimeForAllBochner} and \eqref{ass:NEWASS22}, we infer the existence for all $u_s^n$ by the same proposition.

We prove the remaining claims again by induction. For the base case, we can expand $u_s^1=S(f+sd,u)$ by using \eqref{ass:LZero} and Proposition \ref{prop:hadamardDifferentiabilityForPVI} directly and we obtain $\delta^1 := \partial S(f,u)(d) \in L^2(0,T;V) \cap L^\infty(0,T;H)$ such that \eqref{eq:idForu1} holds. 
Furthermore, $u^1_s \in W_{\mathrm r}(0,T)$. Now assume the statement is true for $n=k$ so that \eqref{eq:idForus} holds. 
Since $u^k_s \in W_{\mathrm r}(0,T)$, ${L}\Phi(u_s^k) \in L^2(0,T;H)$, and by \eqref{ass:inL2VLinftyHimples}, since 
\[\alpha^k = \alpha^k - \Phi'(u)(\alpha^{k-1}) + \Phi'(u)(\alpha^{k-1}) \in L^2(0,T;V) \cap L^\infty(0,T;H) + W_{\mathrm r}(0,T),\] we have ${L}\Phi'(u)(\alpha^k) \in L^2(0,T;H)$ and so assumption \eqref{ass:LPhiObstacleAndPerutbredInL2H} of Proposition \ref{prop:formulaForSDiff} holds, and as does \eqref{ass:perturbedObstacleInWszo} as shown above, and the proposition can applied to give
\begin{align*}
u^{k+1}_s 
&= u + s\alpha^{k+1} + o^{k+1}(s)
\end{align*}
(just like in the proof of the first case) with $u_s^{k+1} \in W_{\mathrm r}(0,T)$ and $\alpha^{k+1}-\Phi'(u)(\alpha^k) \in L^2(0,T;V)\cap L^\infty(0,T;H)$ and ${L}\Phi'(u)(\alpha^{k}) \in L^2(0,T;H)$ as desired. Note that we used the fact that \eqref{ass:PhiIsIncreasingInTimeForAllBochner} implies the increasing property of all obstacles considered in the proof.

\medskip

\noindent \textsc{3. Conclusion.}
The claim of the VI satisfied by the $\alpha^n$ follows from Proposition \ref{prop:formulaForSDiff} whilst the convergence behaviour stated in the result is a consequence of either Theorem \ref{thm:approximationOfQVIByVIIterates} (if $d \geq 0$) or Theorem \ref{thm:approximationOfQVIByVIIteratesDec} (if $d \leq 0$), using the fact that \eqref{ass:signOnTheDirection} implies that $u^0_s$ is either a subsolution or supersolution.
\end{proof}
\begin{remark}
Everything up to the convergence of the $\{u_s^n\}$ stated in the above result holds if we do not assume \eqref{ass:signOnTheDirection} and either \eqref{ass:completeContinuityOfPhi} or \eqref{ass:strongerCompleteContinuityOfPhi} respectively. Also, \eqref{ass:LFour} was necessary only to prove that each $o^n$ is a higher-order term.
\end{remark}
\subsection{Properties of the iterates}
In this section, we give some basic attributes of the directional derivatives $\alpha^n$ and the higher-order terms $o^n$. One should not forget that these objects are time-dependent, and we will always denote the time component by $t$; this should not be confused with the perturbation parameter $s$.
\begin{lem}\label{lem:properties}The following properties hold:
\begin{enumerate}
\item For a.e. $t \in [0,T]$,
\begin{equation*}
\begin{aligned}
\alpha^1(t) &\geq 0, &&\text{ q.e. on $\{u(t)=\Phi(u)(t)\}$}, \\
\alpha^n(t) &\geq \Phi'(u)(\alpha^{n-1})(t), &&\text{ q.e. on $\{u(t)=\Phi(u)(t)\}$ for $n > 1$.}
\end{aligned}
\end{equation*}
\item The sequences 
\[\{\alpha^n\}_{n \in \mathbb{N}} \text{ and } \{\alpha^n + s^{-1}o^n(s)\}_{n \in \mathbb{N}}\]
are monotone (increasing if $d \geq 0$ and decreasing if $d \leq 0$) and have the same sign as $d$.
\item $(\alpha^n+s^{-1}o^n(s))|_{t=0} = 0.$
\item $\Phi'(u)(\alpha^n)$ has the same sign as $d$.
\end{enumerate}
\end{lem}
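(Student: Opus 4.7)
The plan is to read off all four properties from structural results already available: the VI \eqref{eq:alphanTestFunctionSpace} satisfied by $\alpha^n$, the tangent-cone description in Lemma \ref{lem:tangentConeK0}, the monotonicity of $\{u_s^n\}$ established in Proposition \ref{prop:expansionFormulan}, and the fact that the increasing property of $\Phi$ lifts to its Hadamard derivative. None of the four items requires a fresh energy estimate; everything is extracted from the expansion $u_s^n = u + s\alpha^n + o^n(s)$.

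For item (1), the VI \eqref{eq:alphanTestFunctionSpace} (and for $n=1$, the VI of Proposition \ref{prop:hadamardDifferentiabilityForPVI}) places $\delta^n := \alpha^n - \Phi'(u)(\alpha^{n-1})$, with the convention $\Phi'(u)(\alpha^0) := 0$ so that $\delta^1 = \alpha^1$, in the tangent cone $T^{\tan}_{\mathbb{K}_0, L^2(0,T;V)}(\Phi(u)-u)$. Lemma \ref{lem:tangentConeK0} identifies every element of this cone as a $v$ satisfying $v(t) \geq 0$ q.e.\ on $\{\,\overline{(\Phi(u)-u)(t)}=0\,\} = \{u(t) = \Phi(u)(t)\}$ for a.e.\ $t \in [0,T]$. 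Applying this to $\delta^n$ yields the stated lower bound in both cases.

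For items (2) and (3), the key observation is the identity
\begin{equation*}
\alpha^n + s^{-1} o^n(s) \;=\; s^{-1}(u_s^n - u_s^0) \;=\; s^{-1}(u_s^n - u),
\end{equation*}
valid for every $s>0$ by \eqref{eq:eqForqn}. Proposition \ref{prop:expansionFormulan} asserts that $\{u_s^n\}$ is monotonically increasing and lies above $u = u_s^0$ when $d \geq 0$, and dually when $d \leq 0$, so the identity transfers these sign and monotonicity properties immediately to $\{\alpha^n + s^{-1}o^n(s)\}$ at each fixed $s>0$. Passing to the limit $s \to 0^+$ and invoking Proposition \ref{prop:expansionFormulan}(3), which gives $s^{-1}o^n(s) \to 0$ in $L^p(0,T;H)$, the monotonicity and sign transfer a.e.\ to $\{\alpha^n\}$ itself. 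For item (3), since the VI iterates satisfy $u_s^n(0) = u(0) = u_0$ by construction (cf.\ \eqref{eq:parabolicVIObstacleMapping}), $(u_s^n - u)(0) = 0$; although the summands $\alpha^n$ and $s^{-1}o^n(s)$ need not individually admit traces at $t=0$, their sum does as an element of $W(0,T)$ (or $W_{\mathrm r}(0,T)$), and that trace vanishes, which is exactly the claim.

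For item (4), the increasing property of $\Phi$ lifts to its Hadamard derivative: for $\rho \geq 0$ and $s > 0$, monotonicity of $\Phi$ yields $s^{-1}(\Phi(u+s\rho) - \Phi(u)) \geq 0$, and letting $s \to 0^+$ in $W(0,T)$ gives $\Phi'(u)(\rho) \geq 0$; the case $\rho \leq 0$ is symmetric. Combined with item (2), which provides that $\alpha^n$ has the same sign as $d$, this yields the claim. The only care required throughout is to distinguish between the q.e.-pointwise-in-space statement in (1) and the a.e.\ statements in (2)--(4), and to interpret (3) as a trace of the sum $u_s^n - u$ rather than of the individual addends; once these topological distinctions are fixed, the four items follow directly from Lemma \ref{lem:tangentConeK0}, Proposition \ref{prop:expansionFormulan}, and the monotonicity of $\Phi$.
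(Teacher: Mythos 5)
Your proposal is correct and follows essentially the same route as the paper: item (1) from the tangent-cone membership together with Lemma \ref{lem:tangentConeK0}, items (2) and (3) from the identity $\alpha^n + s^{-1}o^n(s) = s^{-1}(u_s^n - u)$ combined with the monotonicity of $\{u_s^n\}$ from Proposition \ref{prop:expansionFormulan} and the trace at $t=0$, and item (4) from the sign of the difference quotients of the increasing map $\Phi$. The minor refinements you add (the convention $\Phi'(u)(\alpha^0):=0$, reading the trace off the sum rather than the individual addends) are consistent with, and do not alter, the paper's argument.
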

\begin{proof}
The first claim follows from the set that the $\alpha^n$ belong to and the characterisation of Lemma \ref{lem:tangentConeK0}. The second claim is true since the sequence $u^n_s$ is increasing or decreasing in $n$ and due to \eqref{eq:eqForqn} and the vanishing behaviour of $s^{-1}o^n(s)$ and the fact that $u^n_s-u$ has the same sign as $d$ (since if $d \geq 0$, $u^n_s$ is increasing and hence greater than $u$ whilst if $d \leq 0$, $u^n_s$ is decreasing and smaller than $u$). For the third claim, in  \eqref{eq:eqForqn}, take the trace $t=0$ (which is valid since $u^n_s, u$ were defined to have trace $u_0$ at $t=0$) to obtain
\[u_0 = u_0 + (s\alpha^n+ o^n(s))|_{t=0}.\]
Finally, we have that
\[\Phi'(u)(\alpha^n) = \lim_{h \to 0^+}\frac{\Phi(u+h\alpha^n) - \Phi(u)}{h},\]
where the limit is in $L^p(0,T;H)$, and hence, passing to a subsequence, the limit also holds at almost every time strongly in $H$:
\[\Phi'(u)(\alpha^n)(t) = \lim_{h_j \to 0^+}\frac{\Phi(u+h_j\alpha^n)(t) - \Phi(u)(t)}{h_j},\]
which is either greater than or less than zero depending on the sign of $\alpha^n$ which in turn depends on the sign of $d$ (see part 2 of this lemma).
\end{proof}
The first part of the previous result tells us about the quasi-everywhere behaviour of the directional derivatives on the coincidence set. We can say a little more about them in an almost everywhere sense.
\begin{lem}\label{lem:alphaNOnI}We have
\[\alpha^1 \leq 0 \quad \text{a.e. on $\{u = \Phi(u)\}$ with equality if $d \geq 0$.}\]
If $\Phi$ is a superposition operator, then for each $n$,
\[\alpha^n \leq 0 \quad\text{ a.e. on $\{u = \Phi(u)\}$ with equality if $d \geq 0$.}\]
\end{lem}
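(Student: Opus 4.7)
The plan is to read off both inequalities from the feasibility constraint $u_s^n \leq \Phi(u_s^{n-1})$ satisfied by the VI iterates, combined with the expansion formula \eqref{eq:eqForqn} and, in the inductive step, the expansion of $\Phi$ at $u$.

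For the base case $n=1$, substituting $u_s^1 = u + s\alpha^1 + o^1(s)$ into $u_s^1 \leq \Phi(u_s^0) = \Phi(u)$ gives $s\alpha^1 + o^1(s) \leq \Phi(u) - u$, and the right-hand side vanishes on the coincidence set. Dividing by $s$ yields $\alpha^1 + s^{-1}o^1(s) \leq 0$ a.e.\ on $\{u = \Phi(u)\}$. Since $s^{-1}o^1(s) \to 0$ in $L^p(0,T;H)$ by Proposition \ref{prop:expansionFormulan}(3), extracting an a.e.-convergent subsequence gives $\alpha^1 \leq 0$ a.e.\ on $\{u = \Phi(u)\}$. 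The equality when $d \geq 0$ follows immediately from Lemma \ref{lem:properties}(2), which gives $\alpha^1 \geq 0$.

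For the general $n \geq 2$ under the superposition hypothesis, I would proceed by induction. The feasibility $u_s^n \leq \Phi(u_s^{n-1})$ combined with \eqref{eq:eqForqn} and the Hadamard expansion \eqref{eq:PhiHadamardFormula} applied at $\psi=u$, $\rho = \alpha^{n-1}$, $\rho(s) = \alpha^{n-1}+s^{-1}o^{n-1}(s)$ yields
\begin{equation*}
s\alpha^n + o^n(s) \leq \Phi(u) - u + s\Phi'(u)(\alpha^{n-1}) + \hat l(s, \alpha^{n-1}, o^{n-1}(s); u).
\end{equation*}
On $\{u = \Phi(u)\}$ the first term on the right vanishes, so dividing by $s$ and taking the a.e.\ limit along a suitable subsequence (using that $s^{-1}o^n(s)$ is higher-order by Proposition \ref{prop:expansionFormulan}(3), and that $s^{-1}\hat l \to 0$ in $L^p(0,T;H)$ by the estimate \eqref{eq:hatLBound} and assumption \eqref{ass:LFour}) gives
\[
\alpha^n \leq \Phi'(u)(\alpha^{n-1}) \quad \text{a.e.\ on } \{u = \Phi(u)\}.
\]

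The only place where the superposition hypothesis is essential, and the main obstacle, is closing the induction by showing $\Phi'(u)(\alpha^{n-1}) \leq 0$ on $\{u = \Phi(u)\}$ from the inductive hypothesis $\alpha^{n-1} \leq 0$ there. For a superposition operator $\Phi(v)(t,x) = \hat\Phi(t,x,v(t,x))$, the linearisation acts pointwise as $\Phi'(u)(\alpha)(t,x) = \partial_v \hat\Phi(t,x,u(t,x))\,\alpha(t,x)$, and since $\Phi$ is increasing, $\hat\Phi$ is monotone in its last argument so that $\partial_v \hat\Phi \geq 0$; hence $\Phi'(u)$ preserves the pointwise sign of its argument. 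Combining this with the inductive hypothesis produces $\Phi'(u)(\alpha^{n-1}) \leq 0$ a.e.\ on $\{u = \Phi(u)\}$, which together with the inequality above yields $\alpha^n \leq 0$ on that set. The equality assertion for $d \geq 0$ follows once more from Lemma \ref{lem:properties}(2), which supplies $\alpha^n \geq 0$ globally.
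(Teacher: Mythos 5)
Your base case is exactly the paper's argument: substitute the expansion into the feasibility constraint $u_s^1 \leq \Phi(u)$, note the right-hand side vanishes on the coincidence set, divide by $s$ and pass to an a.e.\ limit, then invoke Lemma \ref{lem:properties}(2) for the equality when $d \geq 0$. For $n \geq 2$, however, you take a genuinely different route. The paper telescopes the feasibility constraints, $u_s^n \leq \Phi(u_s^{n-1}) \leq \cdots \leq \Phi^n(u_s^0) = \Phi^n(u)$ (using only that $\Phi$ is increasing), and then observes that for a superposition operator the coincidence set is invariant under iteration ($u = \Phi(u)$ forces $u = \Phi^n(u)$ there), so the right-hand side of $s\alpha^n \leq \Phi^n(u) - u - o^n(s)$ collapses to $-o^n(s)$ in one stroke — no induction on $n$, no expansion of $\Phi$, and no appeal to $\Phi'(u)$ at all. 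You instead expand $\Phi(u_s^{n-1})$ to first order, obtain $\alpha^n \leq \Phi'(u)(\alpha^{n-1})$ a.e.\ on the coincidence set, and close the induction via the sign-preservation of $\Phi'(u)$ for a Nemytskii-type operator. Both arguments use the superposition hypothesis at exactly one point, but in different ways: the paper for the pointwise fixed-point property of the iterates $\Phi^n$, you for the locality of the linearisation $\Phi'(u)(\alpha)(t,x) = \partial_v\hat\Phi(t,x,u(t,x))\alpha(t,x)$ — note this genuinely requires $\hat\Phi$ to act pointwise in $x$ as well as in $t$ (the inductive hypothesis controls $\alpha^{n-1}$ only on the coincidence set, so a nonlocal $\Phi'(u)$ would not inherit the sign there), whereas the paper's formal definition of superposition in \S 2 is pointwise only in time; admittedly the paper's own proof also implicitly needs this pointwise reading. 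The trade-off: your route costs more machinery (the estimate \eqref{eq:hatLBound} and assumption \eqref{ass:LFour} to dispose of $\hat l$), but it yields the extra relation $\alpha^n \leq \Phi'(u)(\alpha^{n-1})$ on the coincidence set, which combined with Lemma \ref{lem:properties}(1) actually identifies $\alpha^n = \Phi'(u)(\alpha^{n-1})$ there — information the paper's shorter argument does not produce.
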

\begin{proof}
From $s\alpha^n = u^n_s-u-o^n(s)$, since $u^n_s \leq \Phi(u^{n-1}_s) \leq ... \leq \Phi^n(u^0_s) = \Phi^n(u)$, we find
\begin{equation}\label{eq:4a}
s\alpha^n \leq \Phi^n(u)-u - o^n(s).
\end{equation}
On the set $\{u=\Phi(u)\}$, we get $s\alpha^1 \leq -o^1(s)$
and dividing here by $s$ and sending to zero, we see by the sandwich theorem that if $d \geq 0$, $\alpha^1 = 0$ on $\{u=\Phi(u)\}$. If $\Phi$ is a superposition operator, observe that if $t$ is such that $u(t) = \Phi(u(t))$, then in fact 
\begin{equation*}
u(t) = \Phi^n(u(t))\quad\text{for any $n \in \mathbb{N}$}.
\end{equation*}
Using this fact on the right-hand side of \eqref{eq:4a} gives us the result.
\end{proof}
\subsection{Uniform bounds on the iterates}
We give a result on the boundedness of the directional derivatives $\alpha^n$ under two different sets of assumptions. The first set requires some boundedness conditions on the obstacle mapping including a smallness condition, whilst the second requires instead some regularity and complementarity (for the latter, see \cite[\S 7.3.1]{MR3726880} for the parabolic VI case) for the system. 

\begin{ass}\label{ass:forBoundednessOfAlphas}
Suppose that either
\begin{empheq}[left={\empheqlbrace}]{align}
\norm{\Phi'(u)(\alpha^{n-1})}{L^2(0,T;V)} &\leq C_1^*\norm{\alpha^{n-1}}{L^2(0,T;V)},\label{ass:lb1}\tag{L4a}\\
\norm{\partial_t \Phi'(u)(\alpha^{n-1})}{L^2(0,T;V^*)} &\leq C_2^*\norm{\alpha^{n-1}}{L^2(0,T;V)},\label{ass:lb2}\tag{L5a}\\
\norm{\Phi'(u)(\alpha^{n-1})(T)}{H}^2 &\leq C_3^*\norm{\alpha^{n-1}}{L^2(0,T;V)}^2 + C,\label{eq:boundednessOfAlphaDerivativeAtT}\tag{L6a}\\
C_bC_1^*+C_2^* + C_3^* &< C_a,\tag{L7a}\label{ass:smallnessCondition}
\end{empheq}
or 
\begin{empheq}[left={\empheqlbrace}]{align}
(u'+Au-f)(u-\Phi(u)) &= 0 \text{ a.e. on $(0,T)\times \Omega$},\label{eq:complimentarity}\tag{L4b}\\
\Phi'(u)(\alpha^n) &= 0 \text{ a.e. on $\{u=\Phi(u)\}$},\label{eq:derivativesZeroOnCoincidenceSet}\tag{L5b}\\
\norm{\Phi'(u)(\alpha^n)(0)}{H} &\leq C,\label{eq:boundednessOfAlphaDerivativeAtZero}\tag{L6b}
\end{empheq}
where all constants are independent of $n$.
\end{ass}
Regarding the fulfillment of assumption \eqref{eq:derivativesZeroOnCoincidenceSet}, Lemma \ref{lem:properties} may be helpful for certain choices of $\Phi$ in applications.
\begin{prop}[Bound on $\{\alpha^n\}$]\label{prop:boundOnAlphaN}
Let Assumption \ref{ass:forBoundednessOfAlphas} hold.  Then $\alpha^n \weaklyto \alpha \text{ in $L^2(0,T;V)$}.$
\end{prop}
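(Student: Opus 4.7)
My plan is to treat the two cases of Assumption \ref{ass:forBoundednessOfAlphas} in parallel: in each, pick an appropriate admissible test function in the VI \eqref{eq:alphanTestFunctionSpace} and combine it with coercivity of $A$ and the posited estimates to produce a uniform bound on $\norm{\alpha^n}{L^2(0,T;V)}$; then use the pointwise monotonicity of $\{\alpha^n\}_{n\in\mathbb{N}}$ from Lemma \ref{lem:properties}(2) to promote a subsequential weak limit to weak convergence of the full sequence.

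\textbf{Smallness case \eqref{ass:lb1}--\eqref{ass:smallnessCondition}.} I would test \eqref{eq:alphanTestFunctionSpace} with $\varphi := \Phi'(u)(\alpha^{n-1})$; this is admissible because $\varphi - \Phi'(u)(\alpha^{n-1}) = 0$ lies in every cone, and \eqref{ass:inL2VLinftyHimples} guarantees $\Phi'(u)(\alpha^{n-1}) \in W_{\mathrm r}(0,T) \subset W(0,T)$ so that the pairing with $\varphi'$ is meaningful. Setting $\beta := \Phi'(u)(\alpha^{n-1})$, the VI reduces to $\int_0^T \langle \beta' + A\alpha^n - d, \alpha^n - \beta\rangle \leq 0$. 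Using coercivity and boundedness of $A$, the identity
\[
\int_0^T \langle \beta', \beta\rangle = \tfrac{1}{2}\norm{\beta(T)}{H}^2 - \tfrac{1}{2}\norm{\beta(0)}{H}^2,
\]
the Cauchy--Schwarz inequality, and the estimates \eqref{ass:lb1}, \eqref{ass:lb2}, \eqref{eq:boundednessOfAlphaDerivativeAtT}, I expect to obtain
\[
C_a \norm{\alpha^n}{L^2(V)}^2 \leq (C_b C_1^* + C_2^*)\,\norm{\alpha^n}{L^2(V)}\norm{\alpha^{n-1}}{L^2(V)} + \tfrac{C_3^*}{2}\norm{\alpha^{n-1}}{L^2(V)}^2 + \text{(lower order)}.
\]
A symmetric Young's inequality on the mixed term, absorption of the lower-order terms into $\norm{\alpha^n}{L^2(V)}^2$ with a small coefficient, and the smallness condition \eqref{ass:smallnessCondition} should then produce a recursion $A_n \leq \rho A_{n-1} + K$ with $A_n := \norm{\alpha^n}{L^2(V)}^2$ and contraction constant $\rho < 1$, which delivers the uniform bound by induction on $n$.

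\textbf{Complementarity case \eqref{eq:complimentarity}--\eqref{eq:boundednessOfAlphaDerivativeAtZero}.} Here the natural choice is $\varphi := 0$. Admissibility reduces to the inclusion $-\Phi'(u)(\alpha^{n-1}) \in \cl_W\bigl(T^{\mathrm{rad}}_{\mathbb{K}_0}(\Phi(u)-u) \cap [u'+Au-f]^\perp\bigr)$, and this uses both remaining hypotheses in a coupled way: \eqref{eq:derivativesZeroOnCoincidenceSet} forces $\Phi'(u)(\alpha^{n-1})$ to vanish a.e.\ on the coincidence set $\{u=\Phi(u)\}$ (giving the pointwise sign condition and placing the element in the tangent cone), while \eqref{eq:complimentarity} implies that the non-negative multiplier $-(u'+Au-f)$ is supported on $\{u=\Phi(u)\}$, so orthogonality is automatic on that support. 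With $\varphi = 0$ the VI collapses to
\[
\int_0^T \langle A\alpha^n - d, \alpha^n\rangle \leq \tfrac{1}{2}\norm{\Phi'(u)(\alpha^{n-1})(0)}{H}^2,
\]
and coercivity of $A$, a Young estimate on the $d$-pairing, and the uniform bound \eqref{eq:boundednessOfAlphaDerivativeAtZero} immediately yield $\norm{\alpha^n}{L^2(V)} \leq C$ uniformly in $n$.

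\textbf{Passage to the limit and main obstacle.} Regardless of which case applies, the uniform bound gives a subsequence $\alpha^{n_k} \weaklyto \alpha$ in $L^2(0,T;V)$. By Lemma \ref{lem:properties}(2) the sequence $\{\alpha^n\}$ is pointwise a.e.\ monotone in $n$ and hence has a pointwise a.e.\ limit, which forces every weak limit point to coincide with this pointwise limit; therefore the whole sequence converges weakly to $\alpha$. The hardest step will be rigorously certifying admissibility of $\varphi = 0$ in the complementarity case, since a priori the function $-\Phi'(u)(\alpha^{n-1})$ is only known to lie in the tangent cone and one needs to show it actually belongs to the $W(0,T)$-closure of the radial cone intersected with the orthogonality constraint; the secondary bookkeeping issue is choosing the Young-inequality weights in the smallness case sharp enough for the conclusion to fit genuinely inside \eqref{ass:smallnessCondition} and not a strictly stronger condition.
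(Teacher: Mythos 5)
Your proposal is correct and follows essentially the same route as the paper: testing with $\varphi=\Phi'(u)(\alpha^{n-1})$ and solving the resulting recursion under the smallness condition in the first case, testing with $\varphi=0$ after verifying its admissibility via \eqref{eq:complimentarity} and \eqref{eq:derivativesZeroOnCoincidenceSet} in the second, and upgrading the subsequential weak limit to the full sequence via monotonicity. The only cosmetic difference is that the paper discards the term $(d,\Phi'(u)(\alpha^{n-1}))_H$ outright using the sign information in Lemma \ref{lem:properties}(4), whereas your Cauchy--Schwarz/Young treatment of it merely perturbs the recursion coefficient by an arbitrarily small $\epsilon$, which the strict inequality in \eqref{ass:smallnessCondition} absorbs.
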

\begin{proof}
\noindent \textsc{1. Under boundedness assumptions.}  First suppose that \eqref{ass:lb1} --- 
\eqref{ass:smallnessCondition} hold. In \eqref{eq:alphanTestFunctionSpace} take $\varphi=\Phi'(u)(\alpha^{n-1})$ as test function (this is admissible since zero is contained in the radial cone and the orthogonal space that the test function space is obtained from) to get
\[\int_0^T \langle \partial_t \Phi'(u)(\alpha^{n-1}) + A\alpha^n - d, \alpha^n - \Phi'(u)(\alpha^{n-1})\rangle \leq 0.\]
We can neglect the term $(d,\Phi'(u)(\alpha^{n-1}))_H$ due to part 4 of Lemma \ref{lem:properties}  which tells us that both $d$ and $\Phi'(u)(\alpha^n)$ have the same sign. Hence the above inequality becomes
\begin{align*}
&C_a\norm{\alpha^n}{L^2(0,T;V)}^2\\
&\leq \int_0^T \langle A\alpha^n, \Phi'(u)(\alpha^{n-1})\rangle + (d,\alpha^n)_H  - \langle \partial_t \Phi'(u)(\alpha^{n-1}), \alpha^n - \Phi'(u)(\alpha^{n-1})\rangle\\
&\leq C_b\norm{\alpha^n}{L^2(0,T;V)}\norm{\Phi'(u)(\alpha^{n-1})}{L^2(0,T;V)} + \norm{d}{L^2(0,T;H)}\norm{\alpha^n}{L^2(0,T;H)}\\
&\quad+ \norm{\partial_t \Phi'(u)(\alpha^{n-1})}{L^2(0,T;V^*)}\norm{\alpha^n}{L^2(0,T;V)}  + \frac 12 \norm{\Phi'(u)(\alpha^{n-1})(T)}{H}^2\\
&\quad - \frac 12 \norm{\Phi'(u)(\alpha^{n-1})(0)}{H}^2\\
&\leq C_bC_1^*\norm{\alpha^n}{L^2(0,T;V)}\norm{\alpha^{n-1}}{L^2(0,T;V)} + \norm{d}{L^2(0,T;H)}\norm{\alpha^n}{L^2(0,T;V)}  \\
&\quad + C_2^*\norm{\alpha^{n-1}}{L^2(0,T;V)}\norm{\alpha^n}{L^2(0,T;V)} + C_3^*\norm{\alpha^{n-1}}{L^2(0,T;V)}^2 + C\tag{by \eqref{ass:lb1}, \eqref{ass:lb2} and \eqref{eq:boundednessOfAlphaDerivativeAtT}}\\
&= (C_bC_1^*+C_2^*)\norm{\alpha^n}{L^2(0,T;V)}\norm{\alpha^{n-1}}{L^2(0,T;V)} + \norm{d}{L^2(0,T;H)}\norm{\alpha^n}{L^2(0,T;V)}\\
&\quad + C_3^*\norm{\alpha^{n-1}}{L^2(0,T;V)}^2 + C\\
&\leq \frac{(C_bC_1^*+C_2^*)}{2}\left(\norm{\alpha^n}{L^2(0,T;V)}^2 + \norm{\alpha^{n-1}}{L^2(0,T;V)}^2\right) + C_\rho\norm{d}{L^2(0,T;H)}^2 + \rho\norm{\alpha^n}{L^2(0,T;V)}^2\\
&\quad + C_3^*\norm{\alpha^{n-1}}{L^2(0,T;V)}^2 + C.
\end{align*}
Defining $a_n := \norm{\alpha^n}{L^2(0,T;V)}$, this reads
\begin{align*}
\left(C_a - \frac 12(C_bC_1^*+C_2^*) - \rho\right)a_n^2 \leq \left(\frac 12(C_bC_1^*+C_2^*)  + C_3^*\right) a_{n-1}^2 + C_\rho \norm{d}{L^2(0,T;H)}^2 + C
\end{align*}
which we write as
\begin{align*}
a_n^2 \leq \frac{A_2}{A_1} a_{n-1}^2 + \frac{C_\rho \norm{d}{L^2(0,T;H)}^2 + C}{A_1}
\end{align*}
where we have denoted
\[A_1 := C_a - \frac 12(C_bC_1^*+C_2^*) - \rho \quad\text{and}\quad A_2:= \frac 12(C_bC_1^*+C_2^*) +  C_3^* .\]
Solving this recurrence inequality leads to 
\begin{align*}
a_n^2 \leq \left(\frac{A_2}{A_1} \right)^{n-1}a_{1}^2 + \frac{C_\rho \norm{d}{L^2(0,T;H)}^2 + C}{A_1}\left(\frac{1-\left(\frac{A_2}{A_1}\right)^{n-1}}{1-\frac{A_2}{A_1}}\right).
\end{align*}
We evidently need $A_2 < A_1$ for this sequence to be bounded uniformly, that is, 
\begin{align*}
\frac 12(C_bC_1^*+C_2^*)  + C_3^* < C_a - \frac 12(C_bC_1^*+C_2^*) - \rho  \iff 
C_bC_1^*+C_2^* + C_3^* < C_a - \rho 
\end{align*}
i.e., \eqref{ass:smallnessCondition}. Under this condition, the bound is uniform and there is a weak limit for a subsequence of $\{\alpha^n\}_{n \in \mathbb{N}}$. Since the $\alpha^n$ are monotone, they have a pointwise a.e. monotone limit which must agree with $\alpha$ so indeed $\alpha^n \weaklyto \alpha$ in $L^2(0,T;V)$. 

\medskip

\noindent \textsc{2. Under regularity assumptions.} Now assume instead that \eqref{eq:complimentarity} --- 
\eqref{eq:boundednessOfAlphaDerivativeAtZero} hold. We want to show that $\varphi \equiv 0$ is a valid test function in the VI \eqref{eq:alphanTestFunctionSpace} for $\alpha^n$. Thus we need to prove that $-\Phi'(u)(\alpha^{n-1}) \in \cl_{W}(T^{\mathrm{rad}}_{\mathbb{K}_0}(\Phi(u)-u)\cap [u'+Au-f]^\perp)$. On this note, observe that 
\begin{align*}
\int_0^T \int_\Omega (u'+Au-f)\Phi'(u)(\alpha^{n-1}) &= \int_{\{u=\Phi(u)\}} (u'+Au-f)\Phi'(u)(\alpha^{n-1})\\
&\quad + \int_{\{u < \Phi(u)\}} (u'+Au-f)\Phi'(u)(\alpha^{n-1})\\
&= \int_{\{u=\Phi(u)\}} (u'+Au-f)\Phi'(u)(\alpha^{n-1}) \tag{by \eqref{eq:complimentarity}}\\
&=0 \tag{by \eqref{eq:derivativesZeroOnCoincidenceSet}}.
\end{align*}
The assumption \eqref{eq:derivativesZeroOnCoincidenceSet} implies that $-\Phi'(u)(\alpha^{n-1}) \geq 0$ a.e. on $\{u=\Phi(u)\}$ and thus it belongs to $T^{\mathrm{rad}}_{\mathbb{K}_0}(u-\Phi(u)) \cap [u'+Au-f]^\perp$ (see \eqref{eq:dd1})  and this is obviously a subset of its closure in $W$. Therefore, $0$ is a valid test function in \eqref{eq:alphanTestFunctionSpace} and testing with this we find
\[\int_0^T \langle A\alpha^n - d, \alpha^n \rangle \leq \frac 12 \norm{\Phi'(u)(\alpha^{n-1})(0)}{H}^2\]
which easily leads to the desired bound due to the assumption \eqref{eq:boundednessOfAlphaDerivativeAtZero}. As before, the monotonicity of the sequence implies the convergence for the whole sequence.
\end{proof}

\subsection{Characterisation of the limit of the directional derivatives}
We now want to study the limiting objects associated to the sequences $\{\alpha^n\}$ and $\{\delta^n\}$. First, we introduce the notation $B_\epsilon^q(u)$ to stand for a closed ball in $L^q(0,T;H)$ around $u$ of radius $\epsilon$
\[B_\epsilon^q(u) := \{ v \in L^q(0,T;H) : \norm{v-u}{L^q(0,T;H)} \leq \epsilon\},\] 
and introduce some assumptions that will be of use here and in further sections.
\begin{ass}\label{ass:requiredLocalAssumptions}Suppose that
\begin{align}
&\Phi'(u)(\cdot)\colon L^2(0,T;V) \to L^2(0,T;V) \text{ is completely continuous},\tag{L8}\label{ass:compContOfDerivOfPhi}\\
&{L}(\Phi'(u)(\cdot))\colon L^2(0,T;V) \to L^2(0,T;H) \text{ is completely continuous},\tag{L9}\label{ass:strongCCofDerivative}
\end{align}
and assume that there exists $\epsilon > 0$ such that for all $z \in L^2(0,T;H) \cap B_\epsilon^p(u) + B_\epsilon^2(0)$ and $v \in L^2(0,T;V)\cap L^p(0,T;H)$, 
\begin{align}
\norm{\Phi'(z)v}{L^p(0,T;H)} &\leq K_1^*\norm{v}{L^p(0,T;H)},\tag{L10}\label{ass:big0}\\
\norm{\Phi'(z)v}{L^2(0,T;V)} &\leq K_2^*\norm{v}{L^p(0,T;H)},\tag{L11}\label{ass:bigA}\\
\norm{\partial_t(\Phi'(z)v)}{L^2(0,T;V^*)} &\leq K_3^*\norm{v}{L^p(0,T;H)},\tag{L12}\label{ass:bigB}\\
\intertext{where}
K_1^*+\frac{T^{\frac 1p}(K_2^*C_b+K_3^*)}{\sqrt{C_a}} &< 1\label{ass:smallnessOfDerivOfPhi}.\tag{L13}
\end{align}
Here, $p \in [1,\infty)$ is as in Assumption \ref{ass:newAss}.
\end{ass}
Regarding \eqref{ass:compContOfDerivOfPhi}, it may be helpful to note that Assumption \ref{ass:PhiHadamardDifferentiable} implies that  $\Phi\colon L^2(0,T;V) \to W(0,T)$ is completely continuous. As a precursor to characterising the directional derivative $\alpha$, we study the limit of $\{\delta^n\}_{n \in \mathbb{N}}$ in the next lemma. 
Since $\delta^n = \alpha^n + \Phi'(u)(\alpha^{n-1})$, if $\Phi'(u)(\cdot)\colon L^2(0,T;V) \to L^p(0,T;H)$ is bounded, we can find a subsequence of $\{\delta^n\}_{n \in \mathbb{N}}$ such that $\delta^{n_j} \weaklyto \delta$ in $L^p(0,T;H)$ for some $\delta$. In fact under additional assumptions the convergence holds for the full sequence as shown below.


%
\begin{lem}If  \eqref{ass:compContOfDerivOfPhi} holds, then 
$\delta^n \weaklyto \delta$ in $L^2(0,T;V)$.
\end{lem}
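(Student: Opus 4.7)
The plan is to use the recursion relation \eqref{eq:idForAlphaN}, which gives $\delta^n = \alpha^n - \Phi'(u)(\alpha^{n-1})$, and combine the weak convergence of $\{\alpha^n\}$ with the complete continuity of $\Phi'(u)$ to produce weak convergence of the difference.

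First, by Proposition \ref{prop:boundOnAlphaN}, under the operating assumptions we have $\alpha^n \weaklyto \alpha$ in $L^2(0,T;V)$, and in particular the shifted sequence $\{\alpha^{n-1}\}$ also converges weakly to $\alpha$ in $L^2(0,T;V)$. Since \eqref{ass:compContOfDerivOfPhi} asserts that $\Phi'(u)(\cdot) \colon L^2(0,T;V) \to L^2(0,T;V)$ is completely continuous, the weak convergence of $\{\alpha^{n-1}\}$ is promoted to strong convergence of its image:
\[
\Phi'(u)(\alpha^{n-1}) \to \Phi'(u)(\alpha) \quad \text{in } L^2(0,T;V).
\]

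Subtracting this (strongly convergent) sequence from $\{\alpha^n\}$ (weakly convergent) preserves weak convergence, so
\[
\delta^n = \alpha^n - \Phi'(u)(\alpha^{n-1}) \weaklyto \alpha - \Phi'(u)(\alpha) \quad \text{in } L^2(0,T;V),
\]
and the limit is precisely the object $\delta := \alpha - \Phi'(u)(\alpha)$, which is the natural candidate consistent with passing to the limit in the relation $\alpha^n = \Phi'(u)(\alpha^{n-1}) + \delta^n$. Since the limit is unique and the full sequence $\{\alpha^n\}$ converges (not just a subsequence, as noted in Proposition \ref{prop:boundOnAlphaN}), the full sequence $\{\delta^n\}$ converges weakly in $L^2(0,T;V)$ as claimed. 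There is no real obstacle here beyond invoking the two ingredients in the correct order; the only subtlety is noting that weak plus strong converges weakly and that the complete continuity hypothesis is precisely tailored to the space $L^2(0,T;V)$ in which weak convergence of $\alpha^n$ is available.
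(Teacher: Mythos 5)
Your proof is correct and is essentially the paper's argument: the paper likewise passes to the limit in the recursion \eqref{eq:idForAlphaN} using the weak convergence $\alpha^n \weaklyto \alpha$ from Proposition \ref{prop:boundOnAlphaN} together with the complete continuity \eqref{ass:compContOfDerivOfPhi}, which upgrades $\Phi'(u)(\alpha^{n-1})$ to a strongly convergent sequence. Your write-up just makes explicit the "weak plus strong" step that the paper leaves implicit.
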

\begin{proof}
With the aid of Proposition \ref{prop:boundOnAlphaN}, we can pass to the limit in \eqref{eq:idForAlphaN}, which is $\alpha^{n+1}=\Phi'(u)(\alpha^{n}) - \delta^{n+1}$, to find the weak convergence in $L^2(0,T;V)$ of the whole sequence $\{\delta^n\}$ to some $\delta \in L^2(0,T;V)$.
\end{proof}
To characterise $\delta$ as the solution of a VI in itself, it becomes useful to define the set
\[C_{\mathbb{K}_0}(y) := \{ v \in L^2(0,T;V) : v(t) \geq 0 \text{ q.e. on $\{y(t)=0\}$ for a.e. $t \in [0,T]$} \}.\]
\begin{lem}\label{lem:strongConvergenceOfDirDerv}Under the conditions of the previous lemma, $\delta$ satisfies
\begin{equation*}\label{eq:VIforDelta}
\begin{aligned}
&\delta  \in C_{\mathbb{K}_0}(u-\Phi(u)) \cap [u' + Au - f]^\perp : \int_0^T  \langle z' + A\delta - (d-{L}\Phi'(u)(\alpha)), \delta-z \rangle  \leq \frac 12 \norm{z(0)}{H}^2\\
&\quad  \quad \quad \quad \quad\quad \quad \quad \quad \quad \quad \quad  \quad  \quad \forall z \in \cl_{W}(T_{\mathbb{K}_0}^{\mathrm{rad}}(\Phi(u)-u)\cap [u'+Au-f]^\perp).
\end{aligned}
\end{equation*} 
\end{lem}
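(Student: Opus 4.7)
The strategy is to pass to the limit in the VI characterising each $\delta^n$, which by Proposition \ref{prop:hadamardDifferentiabilityForPVI} applied with the source term $d-{L}\Phi'(u)(\alpha^{n-1})$ (recall $\delta^n = \partial S(f,u)(d-{L}\Phi'(u)(\alpha^{n-1}))$) reads
\begin{equation*}
\begin{aligned}
&\delta^n \in T_{\mathbb{K}_0,L^2}^{\tan}(w) \cap [u'+Au-f]^\perp : \\
&\qquad \int_0^T \langle \varphi' + A\delta^n - (d - {L}\Phi'(u)(\alpha^{n-1})), \delta^n - \varphi\rangle \leq \tfrac12 \|\varphi(0)\|_H^2
\end{aligned}
\end{equation*}
for all $\varphi \in \cl_{W}(T^{\mathrm{rad}}_{\mathbb{K}_0}(w)\cap[u'+Au-f]^\perp)$, where $w = \Phi(u)-u$. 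Crucially the test function space is independent of $n$.

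First I would collect the available convergences: by the previous lemma $\delta^n \weaklyto \delta$ in $L^2(0,T;V)$, and by Proposition \ref{prop:boundOnAlphaN}, $\alpha^{n-1} \weaklyto \alpha$ in $L^2(0,T;V)$. The key input is assumption \eqref{ass:strongCCofDerivative}, which gives the strong convergence ${L}\Phi'(u)(\alpha^{n-1}) \to {L}\Phi'(u)(\alpha)$ in $L^2(0,T;H)$, and hence in $L^2(0,T;V^*)$. Combined with $\delta^n \weaklyto \delta$ in $L^2(0,T;V)$, this allows me to pass to the limit in the duality pairing $\int_0^T \langle {L}\Phi'(u)(\alpha^{n-1}), \delta^n - \varphi\rangle$.

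Next I would expand the VI and treat each resulting piece. The terms $\int_0^T \langle \varphi', \delta^n-\varphi\rangle$ and $\int_0^T \langle A\delta^n, \varphi\rangle$ and $\int_0^T \langle d, \delta^n-\varphi\rangle$ pass to the limit directly via the weak convergence $\delta^n \weaklyto \delta$ in $L^2(0,T;V)$. For the quadratic piece $\int_0^T \langle A\delta^n, \delta^n\rangle$, I would use weak lower semicontinuity of the positive symmetric bilinear form induced by $A$, which yields an inequality in the correct direction since this term appears on the left-hand side of the VI and we are taking $\liminf$. Combining these bounds gives the stated VI for $\delta$ with source $d - {L}\Phi'(u)(\alpha)$.

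It remains to verify that $\delta$ lies in the right constraint set. The orthogonality $\int_0^T \langle u'+Au-f, \delta\rangle = 0$ passes to the weak limit trivially since $u'+Au-f \in L^2(0,T;V^*)$ is fixed. For the cone membership, by Lemma \ref{lem:tangentConeK0} each $\delta^n(t) \geq 0$ q.e. on $\{u(t)=\Phi(u)(t)\}$ for a.e. $t$, i.e., $\delta^n \in C_{\mathbb{K}_0}(u-\Phi(u))$, and this set is convex and (strongly, hence weakly) closed in $L^2(0,T;V)$ by a standard Mazur-type argument (it is cut out by non-negativity conditions against non-negative capacitary measures), so the weak limit $\delta$ inherits the property. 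The main obstacle is genuinely the handling of the $\int_0^T \langle A\delta^n, \delta^n\rangle$ term through weak lower semicontinuity—this is what forces the inequality (rather than equality) to survive the limit—together with ensuring that the nonlinear source $d-{L}\Phi'(u)(\alpha^{n-1})$ converges strongly enough to pair against the weakly converging $\delta^n$, for which \eqref{ass:strongCCofDerivative} is precisely tailored.
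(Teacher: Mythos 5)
Your proposal is correct and follows essentially the same route as the paper: pass to the limit in the VI for $\delta^n$ over the ($n$-independent) test set, using the weak convergence $\delta^n \weaklyto \delta$, strong convergence of the source $d-{L}\Phi'(u)(\alpha^{n-1})$, weak lower semicontinuity of $v\mapsto\int\langle Av,v\rangle$, and weak closedness of the constraint set (the paper carries out your ``convex and strongly closed, hence weakly closed'' step explicitly via Mazur's lemma and the countable-union-of-capacity-zero-sets argument). The only cosmetic difference is that you invoke \eqref{ass:strongCCofDerivative} to get strong convergence of ${L}\Phi'(u)(\alpha^{n-1})$, whereas the paper uses continuity of $\Phi'(u)(\cdot)\colon L^2(0,T;H)\to W(0,T)$ together with $\alpha^n\to\alpha$ in $L^2(0,T;H)$ from the compact embedding $V\ctsCompact H$; both deliver the needed convergence of the source term.
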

\begin{proof}
From \eqref{eq:myVI}, $\delta^n$ satisfies the VI
\begin{equation*}\label{eq:VIforDeltaN}
\begin{aligned}
&\delta^n  \in T_{\mathbb{K}_0,L^2}^{\mathrm{tan}}(\Phi(u)-u) \cap [u' + Au - f]^\perp : \\
&\quad \quad \quad \quad \quad \quad \quad \int_0^T  \langle z' + A\delta^n - (d-{L}\Phi'(u)(\alpha^{n-1})), \delta^n-z \rangle  \leq \frac 12 \norm{z(0)}{H}^2\\
&\quad \quad \quad \quad \quad\quad \quad \quad \quad \quad \quad \quad  \quad   \forall z \in \cl_{W}(T_{\mathbb{K}_0}^{\mathrm{rad}}(\Phi(u)-u)\cap [u'+Au-f]^\perp).
\end{aligned}
\end{equation*}
We can pass to the limit here using the convergence result of the previous lemma and the continuity of $\Phi'(u)(\cdot)\colon L^2(0,T;H) \to W(0,T)$ (noting that $\alpha^n \to \alpha$ in $L^2(0,T;H)$ thanks to $V \ctsCompact H$) and the limiting object $\delta$ satisfies the inequality stated in the lemma. 

We must check that $\delta \in C_{\mathbb{K}_0}(\Phi(u)-u) \cap [u' + Au - f]^\perp$ too. It is clear that the orthogonality condition is satisfied due to the convergence in the previous lemma. By \eqref{eq:tangentConeInclusion}, 
\[\delta^n(t) \geq 0 \text{ q.e. on $\{u(t)=\Phi(u)(t)\}$ a.e. $t$}.\]
Due to Mazur's lemma, there is a convex combination 
\[v_k = \sum_{j=k}^{N(k)} a(k)_j \delta^j\] 
of $\{\delta^n\}_{n \in \mathbb{N}}$ such that $v_k \to \delta$ in $L^2(0,T;V)$. Since this convergence is strong, for a subsequence, $v_{k_m}(t) \to \delta(t)$ in $V$ and hence pointwise q.e. for a.e. $t \in [0,T]$.

By definition, $\delta^n(t) \geq 0$ everywhere on $\{u(t)=\Phi(u)(t)\} \setminus A_n(u)(t)$ where $A_n(u)(t) \subset \{u(t)=\Phi(u)(t)\}$ is a set of capacity zero; this implies that
\begin{equation}\label{eq:helping1}
v_{k_m}(t) \geq 0 \text{ q.e. on $\{u(t)=\Phi(u)(t)\} \setminus \cup_{j=k_m}^{N(k_m)} A_{j}(u)(t)$},
\end{equation}
and using the fact that a countable union of capacity zero sets has capacity zero and the inequality \eqref{eq:helping1}, we can pass to the limit to deduce that $\delta(t) \geq 0$ quasi-everywhere on $\{u(t)=\Phi(u)(t)\}$ for a.e. $t \in [0,T]$. 
\end{proof}
\begin{prop}Under the conditions of the previous lemma, $\alpha$ satisfies the QVI
\begin{align*}
&\alpha-\Phi'(u)(\alpha) \in C_{\mathbb{K}_0}(\Phi(u)-u) \cap [u' + Au - f]^\perp :\\
&\quad \quad \quad \quad \quad \quad \quad \quad \quad  \int_0^T  \langle w' + A\alpha - d, \alpha-w \rangle  \leq \frac 12 \norm{w(0)-\Phi'(u)(\alpha)(0)}{H}^2\\
&\quad \quad \quad \quad \quad \quad \quad \quad \forall w : w -\Phi'(u)(\alpha) \in \cl_{W}(T_{\mathbb{K}_0}^{\mathrm{rad}}(\Phi(u)-u)\cap [u'+Au-f]^\perp).
\end{align*}
\end{prop}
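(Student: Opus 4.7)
The plan is to deduce the QVI for $\alpha$ from the VI characterisation of $\delta$ established in the previous lemma, via the relation $\alpha = \Phi'(u)(\alpha) + \delta$ together with a change of test function.

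First I will pass to the limit in the recursive identity $\alpha^{n} = \Phi'(u)(\alpha^{n-1}) + \delta^{n}$ from \eqref{eq:idForAlphaN}. Proposition \ref{prop:boundOnAlphaN} gives $\alpha^n \weaklyto \alpha$ in $L^2(0,T;V)$, so the complete continuity assumption \eqref{ass:compContOfDerivOfPhi} yields $\Phi'(u)(\alpha^{n-1}) \to \Phi'(u)(\alpha)$ strongly in $L^2(0,T;V)$. Combined with $\delta^n \weaklyto \delta$ in $L^2(0,T;V)$ from the preceding lemma, this identifies
\[
\alpha - \Phi'(u)(\alpha) = \delta.
\]
In particular, the membership $\delta \in C_{\mathbb{K}_0}(\Phi(u)-u) \cap [u' + Au - f]^{\perp}$ from Lemma \ref{lem:strongConvergenceOfDirDerv} immediately supplies the cone and orthogonality requirements on $\alpha - \Phi'(u)(\alpha)$.

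Next, in the VI for $\delta$ I will make the substitution $w := z + \Phi'(u)(\alpha)$. As $z$ ranges over $\cl_{W}(T^{\mathrm{rad}}_{\mathbb{K}_0}(\Phi(u)-u) \cap [u' + Au - f]^\perp)$, the variable $w$ sweeps exactly the test function class in the target statement. The key algebraic identities are
\begin{align*}
\delta - z &= \alpha - w,\\
z' + A\delta - (d - L\Phi'(u)(\alpha)) &= w' + A\alpha - d,\\
\tfrac{1}{2}\norm{z(0)}{H}^2 &= \tfrac{1}{2}\norm{w(0) - \Phi'(u)(\alpha)(0)}{H}^2,
\end{align*}
where the middle line uses $L\Phi'(u)(\alpha) = \partial_t \Phi'(u)(\alpha) + A\Phi'(u)(\alpha)$ and the fact that $w' = z' + \partial_t \Phi'(u)(\alpha)$. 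Plugging these into the VI for $\delta$ gives the stated inequality for $\alpha$ verbatim.

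The calculation amounts to a direct substitution once the limit has been identified; the only delicate point is verifying $\Phi'(u)(\alpha) \in W_{\mathrm r}(0,T)$, so that $L\Phi'(u)(\alpha) \in L^2(0,T;H)$ is well defined and the split $w' = z' + \partial_t \Phi'(u)(\alpha)$ makes sense in $L^2(0,T;V^*)$. This regularity follows from applying \eqref{ass:inL2VLinftyHimples} to the decomposition $\alpha = \delta + \Phi'(u)(\alpha)$ with $\delta \in L^2(0,T;V) \cap L^\infty(0,T;H)$ (an attribute of $\delta$ inherited from the uniform bounds on $\{\delta^n\}$, themselves a consequence of Proposition \ref{prop:boundOnAlphaN} and \eqref{ass:bigA}). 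Thus the main obstacle is bookkeeping rather than conceptual, and the result follows.
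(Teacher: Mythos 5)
Your argument is exactly the paper's: pass to the limit in the recursion $\alpha^{n}=\Phi'(u)(\alpha^{n-1})+\delta^{n}$ to identify $\alpha-\Phi'(u)(\alpha)=\delta$, then perform the change of test function $w:=z+\Phi'(u)(\alpha)$ in the VI for $\delta$ from Lemma \ref{lem:strongConvergenceOfDirDerv}. The additional bookkeeping you supply (complete continuity to identify the limit, the regularity of $\Phi'(u)(\alpha)$) is consistent with what the paper establishes elsewhere, so the proposal is correct and follows the same route.
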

\begin{proof}
From the definition of $\alpha^n$ in terms of $\delta^n$ in \eqref{eq:idForAlphaN}, we obtain
\[\alpha = \Phi'(u)(\alpha)+ \delta.\]
Using this fact in the QVI for $\delta$ given in Lemma \ref{lem:strongConvergenceOfDirDerv} yields
\[\int_0^T  \langle z' + A\alpha + \partial_t \Phi'(u)(\alpha)- d, \delta-z \rangle  \leq \frac 12 \norm{z(0)}{H}^2 \quad \forall z \in \cl_{W}(T_{\mathbb{K}_0}^{\mathrm{rad}}(\Phi(u)-u)\cap [u'+Au-f]^\perp)
\]
which translates into the desired result after setting $w := \Phi'(u)(\alpha) + z$.
\end{proof}

\subsection{Dealing with the higher-order term}\label{sec:HOTs}
We come to the final part which consists in showing that the limit of the higher-order terms $o^n$ is indeed a higher-order term itself. The idea is to be able to commute the two limits
\[\lim_{n \to \infty}\lim_{s \to 0^+} \frac{o^n(s)}{s} \quad\text{and}\quad \lim_{s \to 0^+} \lim_{n \to \infty}\frac{o^n(s)}{s},\]
and this can be done typically under a \textit{uniform} convergence of one of the limits. Such a uniform convergence is assured by the next proposition but first we need a technical result which tells us that the sequence $\{u^n_s\}$ stays close to $u$ for small $s$. Define \[C^*:=K_1^*+\frac{T^{\frac 1p}C_bK_2^*}{\sqrt{C_a}}+ \frac{K_3^*T^{\frac 1p}}{\sqrt{C_a}}.\]
\begin{lem}\label{lem:ball}Under assumptions \eqref{ass:big0}--\eqref{ass:smallnessOfDerivOfPhi}, if $\bar\epsilon \leq \epsilon$ and
\[s \leq \frac{\bar\epsilon\sqrt{C_a}(1-C^*)}{\norm{d}{L^2(0,T;V^*)}T^{\frac 1p}},\]
then $\{u^n_s\}_{n \in \mathbb{N}}\subset B_{\bar\epsilon}^p(u)$.  
\end{lem}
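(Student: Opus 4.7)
The plan is to prove the lemma by induction on $n$, establishing that the estimate
\[\|u^n_s - u\|_{L^p(0,T;H)} \leq \frac{T^{1/p}\,s\,\|d\|_{L^2(0,T;V^*)}}{\sqrt{C_a}(1-C^*)}\]
holds for every $n$; together with the hypothesis on $s$ this immediately yields $u^n_s \in B_{\bar\epsilon}^p(u)$. The base case $n=0$ is trivial because $u^0_s = u$.

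For the inductive step, I will assume $u^{n-1}_s \in B_{\bar\epsilon}^p(u) \subset B_\epsilon^p(u)$ and use the identity $u^n_s = S_{u_0}(f+sd,u^{n-1}_s)$ and $u = S_{u_0}(f,u)$ together with Proposition \ref{prop:conditionsForFormulaToHold} to obtain the decomposition
\[u^n_s - u = (\Phi(u^{n-1}_s) - \Phi(u)) - (\bar{S}_{w^{n-1}_s}({L}\Phi(u^{n-1}_s) - f - sd) - \bar{S}_{w_0}({L}\Phi(u) - f)).\]
The first bracket will be controlled via the fundamental theorem of calculus (Hadamard differentiability of $\Phi$) and the bound \eqref{ass:big0}, noting that the mean-value points $u + \lambda(u^{n-1}_s - u)$ lie in $B_\epsilon^p(u)$ by the inductive hypothesis, which yields $\|\Phi(u^{n-1}_s) - \Phi(u)\|_{L^p(0,T;H)} \leq K_1^* \|u^{n-1}_s-u\|_{L^p(0,T;H)}$. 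For the $\bar{S}$ difference I will apply the Lipschitz estimate \eqref{eq:S0LipschitzWeirdSpaces2} (passing from $L^\infty(0,T;H)$ to $L^p(0,T;H)$ via the factor $T^{1/p}$), combined with the same mean-value argument applied separately to $\Phi$ and $\partial_t\Phi$ using \eqref{ass:bigA} and \eqref{ass:bigB}, which deliver $\|{L}\Phi(u^{n-1}_s) - {L}\Phi(u)\|_{L^2(0,T;V^*)} \leq (C_b K_2^* + K_3^*)\|u^{n-1}_s-u\|_{L^p(0,T;H)}$; the source perturbation $sd$ contributes $\frac{T^{1/p}}{\sqrt{C_a}}s\|d\|_{L^2(0,T;V^*)}$.

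Summing these contributions produces exactly the recursive inequality
\[\|u^n_s - u\|_{L^p(0,T;H)} \leq C^* \|u^{n-1}_s - u\|_{L^p(0,T;H)} + \frac{T^{1/p}s\|d\|_{L^2(0,T;V^*)}}{\sqrt{C_a}},\]
which is precisely the construction motivating the definition of $C^*$. Iterating from $u^0_s = u$ and invoking the smallness $C^* < 1$ from \eqref{ass:smallnessOfDerivOfPhi}, the geometric series yields the stated uniform bound, and the hypothesis on $s$ forces it to be at most $\bar\epsilon$.

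The principal technical subtlety is handling the difference $w^{n-1}_s - w_0 = \Phi(u^{n-1}_s)(0) - \Phi(u)(0)$ in the $\bar{S}$ Lipschitz estimate, since the published estimate \eqref{eq:S0LipschitzWeirdSpaces2} is formulated for a single initial datum. I will deal with this by extending the $(u_1-u_2)^+$-comparison argument (as in Lemma \ref{lem:comparisonPrincipleNoPhi}) to incorporate differing initial data, producing an additional trace term $\|w^{n-1}_s - w_0\|_H$ that is then controlled by the $W_{\mathrm r}(0,T) \hookrightarrow C^0([0,T];H)$ embedding applied to $\Phi(u^{n-1}_s) - \Phi(u)$, whose resulting contribution is absorbed by the smallness \eqref{ass:smallnessOfDerivOfPhi}.
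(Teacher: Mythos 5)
Your proof follows essentially the same route as the paper: the decomposition of $S$ via Proposition \ref{prop:conditionsForFormulaToHold}, the Lipschitz estimate \eqref{eq:S0LipschitzWeirdSpaces2} for $\bar S$, the mean value theorem with \eqref{ass:big0}--\eqref{ass:bigB} to produce the recursion with constant $C^*$, and the geometric series under \eqref{ass:smallnessOfDerivOfPhi}. The only difference is that you explicitly flag the mismatch of initial data in the two $\bar S$ terms, which the paper's estimate \eqref{eq:pre1} silently ignores — a reasonable point of care, though your claim that the resulting trace term is "absorbed by the smallness" would strictly require adjusting the constant $C^*$.
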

\begin{proof}
The continuous dependence estimate \eqref{eq:S0LipschitzWeirdSpaces2} for the map $\bar S$ along with $L^\infty(0,T) \cts L^q(0,T)$ for any $q \geq 1$ yields the following estimate for $f, g \in L^2(0,T;H)$ and $\psi, \phi \in L^2(0,T;V)$ with $\Phi(\psi), \Phi(\phi) \in W_{\mathrm r}(0,T)$:
\begin{align}
\nonumber &\norm{S(g,\psi) - S(f,\phi)}{L^q(0,T;H)}\\
\nonumber &\quad\leq \norm{\Phi(\psi) -\Phi(\phi)}{L^q(0,T;H)} + \frac{T^{\frac 1q}}{\sqrt{C_a}}\norm{L(\Phi(\phi)-\Phi(\psi))}{L^2(0,T;V^*)} \\
\nonumber &\quad\quad+ \frac{T^{\frac 1q}}{\sqrt{C_a}}\norm{g-f}{L^2(0,T;V^*)}\\
\nonumber &\quad\leq \norm{\Phi(\psi) -\Phi(\phi)}{L^q(0,T;H)} + \frac{C_bT^{\frac 1q}}{\sqrt{C_a}}\norm{\Phi(\phi)-\Phi(\psi)}{L^2(0,T;V)}\\
 &\quad\quad + \frac{T^{\frac 1q}}{\sqrt{C_a}}\norm{\partial \Phi(\psi) -\partial \Phi(\phi)}{L^2(0,T;V^*)}+ \frac{T^{\frac 1q}}{\sqrt{C_a}}\norm{g-f}{L^2(0,T;V^*)}.\label{eq:pre1}
	\end{align}
Since $u \in W(0,T)$, Assumption \ref{ass:newAss} implies that $\Phi(u) \in W_{\mathrm{r}}(0,T)$ and we can apply \eqref{eq:pre1} for $u^1_s = S(f+sd,u)$ and $u=S(f,u)$ to get
\[\norm{u^{1}_s - u}{L^q(0,T;H)} \leq \frac{sT^{\frac 1q}}{\sqrt{C_a}}\norm{d}{L^2(0,T;V^*)},\]
so that $u^1_s \in B^q_{\bar\epsilon}(u)$ if $s$ is sufficiently small. Let us fix $q=p$. Applying the mean value theorem to the first three terms on the right-hand side of \eqref{eq:pre1} and taking in addition $\phi, \psi \in L^2(0,T;H) \cap B_{\bar\epsilon}^p(u)$ so that for any $\lambda \in [0,1]$, $\lambda \phi + (1-\lambda)\psi \in L^2(0,T;H) \cap B_{\bar\epsilon}^p(u)$ as well and we can use assumptions \eqref{ass:big0}--\eqref{ass:bigB} to get
\begin{align*}
&\norm{S(g,\psi) - S(f,\phi)}{L^p(0,T;H)}\\
&\quad\leq \left(K_1^*+\frac{T^{\frac 1p}C_bK_2^*}{\sqrt{C_a}}+ \frac{K_3^*T^{\frac 1p}}{\sqrt{C_a}}\right)\norm{\psi -\phi}{L^p(0,T;H)} + \frac{T^{\frac 1p}}{\sqrt{C_a}}\norm{g-f}{L^2(0,T;V^*)}\\
&\quad= C^*\norm{\psi -\phi}{L^p(0,T;H)} + \frac{T^{\frac 1p}}{\sqrt{C_a}}\norm{g-f}{L^2(0,T;V^*)}
\end{align*}
for all such $\phi, \psi$.
Since $u^1_s, u \in L^2(0,T;H) \cap B_{\bar\epsilon}^p(u)$ with $L\Phi(u_s^n) \in L^2(0,T;H)$ from Proposition \ref{prop:expansionFormulan}, the above formula is applicable for $u^2_s = S(f+sd,u^1_s)$ and $u$, and we get for sufficiently small $s$ that
\begin{align*}
\norm{u^{2}_s - u}{L^p(0,T;H)} &\leq C^*\norm{u^{1}_s -u}{L^p(0,T;H)} + \frac{sT^{\frac 1p}}{\sqrt{C_a}}\norm{d}{L^2(0,T;V^*)}\\
&\leq \left(C^*+ 1\right)\frac{sT^{\frac 1p}}{\sqrt{C_a}}\norm{d}{L^2(0,T;V^*)},
\end{align*}
i.e., $u_s^2 \in B_{\bar\epsilon}(u)$. Arguing by induction, the general case satisfies the estimate
\[\norm{u^{n}_s - u}{L^p(0,T;H)} \leq C^*\norm{u^{n-1}_s -u}{L^p(0,T;H)} + \frac{sT^{\frac 1p}}{\sqrt{C_a}}\norm{d}{L^2(0,T;V^*)}\]
and by solving the above recurrence inequality, we find
\begin{align*}
\norm{u^{n}_s - u}{L^p(0,T;H)} &\leq ((C^*)^{n-1} + (C^*)^{n-2} + ... + C^* + 1)\frac{sT^{\frac 1p}}{\sqrt{C_a}}\norm{d}{L^2(0,T;V^*)}\\
&\leq \frac{1}{1-C^*}\frac{sT^{\frac 1p}}{\sqrt{C_a}}\norm{d}{L^2(0,T;V^*)}\\
&\leq \bar\epsilon
\end{align*}
where we used the formula for geometric series (recall that \eqref{ass:smallnessOfDerivOfPhi} says precisely that $C^* < 1$).
\end{proof}

\begin{prop}
Suppose Assumptions \ref{ass:forBoundednessOfAlphas} and \ref{ass:requiredLocalAssumptions} hold. 
Then $s^{-1}o^n(s) \to 0$ in $L^p(0,T;H)$ as $s \to 0^+$ uniformly in $n$ and thus the limit $\lim_{n \to \infty} o^n$ is also a higher-order term.
\end{prop}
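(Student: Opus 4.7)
The plan is to turn the recursion $o^n(s) = r(s,\alpha^{n-1}, o^{n-1}(s))$ from \eqref{eq:identityforOnAndRn} into a contractive inequality in $L^p(0,T;H)$, solve it as a geometric series using the smallness hypothesis \eqref{ass:smallnessOfDerivOfPhi}, and close the argument by showing that the non-recursive remainder vanishes uniformly in $n$ as $s \to 0^+$. Combining the definition of $r$ from Proposition \ref{prop:formulaForSDiff} with the estimates \eqref{eq:hatLBound}, \eqref{eq:hatOBoundIntoL2VStar} and Lemma \ref{lem:AAA}, I first obtain
\begin{align*}
\norm{o^n(s)}{L^p(0,T;H)} &\leq \sup_{\lambda \in [0,1]}\norm{\Phi'(z_s^{n,\lambda})o^{n-1}(s)}{L^p(0,T;H)} \\
&\quad + \frac{T^{1/p}}{\sqrt{C_a}}\sup_{\lambda \in [0,1]}\norm{{L}(\Phi'(z_s^{n,\lambda})o^{n-1}(s))}{L^2(0,T;V^*)} + R^n(s),
\end{align*}
where $z_s^{n,\lambda} := u + s\alpha^{n-1} + \lambda o^{n-1}(s)$ and
\[R^n(s) := \norm{l(s,\alpha^{n-1})}{L^p(0,T;H)} + \tfrac{T^{1/p}}{\sqrt{C_a}}\norm{{L}l(s,\alpha^{n-1})}{L^2(0,T;V^*)} + \norm{o(s,{L}\Phi'(u)(\alpha^{n-1})-d)}{L^p(0,T;H)}\]
collects the terms independent of $o^{n-1}(s)$.

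Next, writing $z_s^{n,\lambda} = [(1-\lambda)u + \lambda u^{n-1}_s] + (1-\lambda)s\alpha^{n-1}$ displays this argument of $\Phi'$ as an element of $L^2(0,T;H)\cap B^p_\epsilon(u) + B^2_\epsilon(0)$ for $s$ below an $n$-independent threshold: the bracketed convex combination of $u$ and $u^{n-1}_s$ lies in $B^p_{\bar\epsilon}(u)$ by Lemma \ref{lem:ball}, while $s\alpha^{n-1}\in B^2_\epsilon(0)$ because $\{\alpha^n\}$ is bounded in $L^2(0,T;V)\cts L^2(0,T;H)$ (Proposition \ref{prop:boundOnAlphaN}). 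The local bounds \eqref{ass:big0}--\eqref{ass:bigB} then yield the contraction
\[\norm{o^n(s)}{L^p(0,T;H)} \leq C^*\,\norm{o^{n-1}(s)}{L^p(0,T;H)} + R^n(s), \qquad C^* := K_1^* + \tfrac{T^{1/p}(C_bK_2^*+K_3^*)}{\sqrt{C_a}},\]
with $C^*<1$ being precisely \eqref{ass:smallnessOfDerivOfPhi}. Iterating and summing the geometric series gives
\[\norm{o^n(s)}{L^p(0,T;H)} \leq \norm{o^1(s)}{L^p(0,T;H)} + \frac{\sup_{k\geq 1}R^k(s)}{1-C^*}.\]

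The final and most delicate step is to establish that $\sup_{k\geq 1} R^k(s)/s \to 0$ as $s \to 0^+$, that is, the vanishing of each contribution to $R^k(s)/s$ is uniform in the direction index $k$. The base points in $R^k$ are fixed ($u$ for $\Phi$, and ${L}\Phi(u)-f$ for $\bar S$), so only the direction varies; by Hadamard differentiability of $\Phi\colon L^2(0,T;H) \to L^p(0,T;H)$ (Assumption \ref{ass:PhiHadamardDifferentiable}), of ${L}\Phi\colon L^2(0,T;H) \to L^2(0,T;V^*)$ (Lemma \ref{lem:AAA}), and of $\bar S\colon L^2(0,T;H) \to L^p(0,T;H)$ (from \cite{Christof}), it suffices to verify that $\{\alpha^{k-1}\}_k$ and $\{{L}\Phi'(u)(\alpha^{k-1})-d\}_k$ are relatively compact in $L^2(0,T;H)$. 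Here I exploit the monotonicity of $\{\alpha^n\}$ established in Lemma \ref{lem:properties}: the sequence is monotone and bounded in $L^2(0,T;V)$, so it converges pointwise a.e.\ and dominated convergence upgrades the weak convergence $\alpha^n \weaklyto \alpha$ (Proposition \ref{prop:boundOnAlphaN}) to strong convergence in $L^2(0,T;H)$; the complete continuity assumption \eqref{ass:strongCCofDerivative} transfers this to ${L}\Phi'(u)(\alpha^{n-1})$. Since $s^{-1}o^1(s)\to 0$ directly from its explicit definition, the conclusion follows. The main obstacle I anticipate is exactly this commutation of the two limits through compactness: uniformity of the higher-order convergence in the base point is generally false (cf.\ Remark \ref{rem:basePoints}), so the argument critically depends on the direction, rather than the base point, being what varies with $n$, together with the monotonicity of $\{\alpha^n\}$ that allows dominated convergence to produce the needed strong convergence.
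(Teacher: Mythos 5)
Your proposal is correct and follows essentially the same route as the paper: the recursion $o^n(s)=r(s,\alpha^{n-1},o^{n-1}(s))$ is turned into a contraction $a_n\leq C^*a_{n-1}+b_{n-1}$ via Lemma \ref{lem:ball} and \eqref{ass:big0}--\eqref{ass:smallnessOfDerivOfPhi}, the geometric series is summed, and the inhomogeneous terms are shown to vanish uniformly in $n$ after division by $s$ by exploiting that only the \emph{directions} (not the base points) vary with $n$, together with compactness of $\{\alpha^n\}$ and $\{L\Phi'(u)(\alpha^n)\}$ in $L^2(0,T;H)$ and compact (Hadamard) differentiability. Your explicit monotone-plus-dominated-convergence justification of the strong $L^2(0,T;H)$ precompactness of $\{\alpha^n\}$ is in fact slightly more careful than the paper's terse assertion of that step, but it is the same argument in substance.
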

\begin{proof}
The proof is in four steps. 

\medskip

\noindent \textsc{Step 1.} 
We first collect some estimates. From the expansions
\begin{align*}
\nonumber \Phi(u+s\alpha^{n-1}) &= \Phi(u) + s\Phi'(u)(\alpha^{n-1}) + l(s,\alpha^{n-1}),\\
\Phi(u+s\alpha^{n-1} + o^{n-1}(s)) &= \Phi(u) + s\Phi'(u)(\alpha^{n-1}) + \hat l(s,\alpha^{n-1}, o^{n-1}(s)),
\end{align*}
we get, from subtracting one from the other and using the mean value theorem,
\begin{align}
&\nonumber \norm{\hat l(s,\alpha^{n-1}, o^{n-1}(s)) }{L^2(0,T;V)}\\ 
\nonumber &\quad \leq \sup_{\lambda \in (0,1)}\norm{\Phi'(u+s\alpha^{n-1}+\lambda o^{n-1}(s))(o^{n-1}(s))}{L^2(0,T;V)} + \norm{l(s,\alpha^{n-1})}{L^2(0,T;V)}\\
&= \sup_{\lambda \in (0,1)}\norm{\Phi'(\lambda u^{n-1}_s + (1-\lambda)(u+s\alpha^{n-1}))(o^{n-1}(s))}{L^2(0,T;V)} + \norm{l(s,\alpha^{n-1})}{L^2(0,T;V)}.\label{eq:aaa}
\end{align}
Now take
\[ s \leq \tau_0 := \min\left(\frac{\epsilon\sqrt{C_a}(1-C^*)}{\norm{d}{L^2(0,T;V^*)}T^{\frac 1p}}, \frac{\epsilon}{\sup_n \norm{\alpha^n}{L^2(0,T;H)}}\right)\]
(the supremum over $n$ is sensible due to Proposition \ref{prop:boundOnAlphaN}). Then $u_s^{n-1} \in B_{\epsilon}^p(u)$ for all $n$ by Lemma \ref{lem:ball}, so that $\lambda u_s^{n-1} + (1-\lambda)u \in B_{\epsilon}^p(u)$ for $\lambda \in (0,1)$, and we also have $s\alpha^{n-1} \in B_{\epsilon}^2(0)$ and hence $\lambda u^{n-1}_s + (1-\lambda)(u+s\alpha^{n-1}) \in B_{\epsilon}^p(u) + B_{\epsilon }^2(0)$. Thus assumption \eqref{ass:bigA} is applicable on the right-hand side of \eqref{eq:aaa} and we obtain
\begin{align}
\norm{\hat l(s,\alpha^{n-1}, o^{n-1}(s)) }{L^2(0,T;V)}\leq K_2^*\norm{o^{n-1}(s)}{L^p(0,T;H)} + \norm{l(s,\alpha^{n-1})}{L^2(0,T;V)}.\label{eq:aa}
\end{align}
Secondly, arguing as before, by differentiating in time the expansions for $\Phi(u+s\alpha^{n-1})$ and $\Phi(u+s\alpha^{n-1}+o^{n-1}(s))$, taking the difference 
and using the mean value theorem (bearing in mind that $(\partial_t \Phi)'(z)(b) = \partial_t \Phi'(z)(b)$ by linearity) and applying assumption \eqref{ass:bigB},
\begin{align}
\norm{\partial_t \hat l(s,\alpha^{n-1}, o^{n-1}(s))}{L^2(0,T;V^*)} 
&\leq K_3^*\norm{o^{n-1}(s)}{L^p(0,T;H)} + \norm{\partial_t l(s,\alpha^{n-1})}{L^2(0,T;V^*)} \label{eq:bb}.
\end{align}

\medskip

\noindent \textsc{Step 2.} By definition of $o^n$ in \eqref{eq:identityforOnAndRn},
\begin{align*}
o^n(s) 
&=\hat l(s,\alpha^{n-1},o^{n-1}(s)) - \hat o(s,{L}\Phi'(u)(\alpha^{n-1})-d, {L}\hat l(s,\alpha^{n-1}, o^{n-1}(s))),
\end{align*}
and using the boundedness estimates \eqref{eq:hatLBound}, \eqref{eq:hatOBoundIntoL2VStar} and assumption \eqref{ass:big0},
\begin{align*}
&\norm{o^n(s)}{L^p(0,T;H)}\\
&\leq \sup_{\lambda \in (0,1)}\norm{\Phi'(u+s\alpha^{n-1}+\lambda o^{n-1}(s))(o^{n-1}(s))}{L^p(0,T;H)} + \norm{l(s,\alpha^{n-1})}{L^p(0,T;H)}\\
&\quad+ \frac{T^{\frac 1p}}{\sqrt{C_a}}\norm{{L}\hat l(s,\alpha^{n-1},o^{n-1}(s))}{L^2(0,T;V^*)}+ \norm{o(s,{L}\Phi'(u)(\alpha^{n-1})-d)}{L^p(0,T;H)}\\
&\leq K_1^*\norm{o^{n-1}(s)}{L^p(0,T;H)} + \norm{l(s,\alpha^{n-1})}{L^p(0,T;H)} + \frac{T^{\frac 1p}}{\sqrt{C_a}}\norm{{L}\hat l(s,\alpha^{n-1},o^{n-1}(s))}{L^2(0,T;V^*)}\\
&\quad + \norm{o(s,{L}\Phi'(u)(\alpha^{n-1})-d)}{L^p(0,T;H)}.
\end{align*}
We estimate the third term above using \eqref{eq:aa} and \eqref{eq:bb} of Step 1:
\begin{align*}
&\norm{{L}\hat l(s,\alpha^{n-1},o^{n-1}(s))}{L^2(0,T;V^*)}\\ 
&\leq C_b\norm{\hat l(s,\alpha^{n-1},o^{n-1}(s))}{L^2(0,T;V)} + \norm{\partial_t\hat l(s,\alpha^{n-1},o^{n-1}(s))}{L^2(0,T;V^*)}\\
&\leq K_2^*C_b\norm{o^{n-1}(s)}{L^p(0,T;H)} + C_b\norm{l(s,\alpha^{n-1})}{L^2(0,T;V)} + K_3^*\norm{o^{n-1}(s)}{L^p(0,T;H)}\\
&\quad + \norm{\partial_t l(s,\alpha^{n-1})}{L^2(0,T;V^*)}.
\end{align*}
Inserting this back above, we find
\begin{align*}
&\norm{o^n(s)}{L^p(0,T;H)}\\
&\leq (K_1^*+\frac{T^{\frac 1p}(K_2^*C_b+K_3^*)}{\sqrt{C_a}})\norm{o^{n-1}(s)}{L^p(0,T;H)} + \norm{l(s,\alpha^{n-1})}{L^p(0,T;H)} \\
&\quad + \frac{T^{\frac 1p}C_b}{\sqrt{C_a}}\norm{l(s,\alpha^{n-1})}{L^2(0,T;V)}+ \frac{T^{\frac 1p}}{\sqrt{C_a}}\norm{\partial_t l(s,\alpha^{n-1})}{L^2(0,T;V^*)}\\
&\quad + \norm{o(s,{L}\Phi'(u)(\alpha^{n-1})-d)}{L^p(0,T;H)}\\
&<  C\norm{o^{n-1}(s)}{L^p(0,T;H)} + \norm{l(s,\alpha^{n-1})}{L^p(0,T;H)} + \frac{T^{\frac 1p}C_b}{\sqrt{C_a}}\norm{l(s,\alpha^{n-1})}{L^2(0,T;V)} \\
&\quad + \frac{T^{\frac 1p}}{\sqrt{C_a}}\norm{\partial_t l(s,\alpha^{n-1})}{L^2(0,T;V^*)} + \norm{o(s,{L}\Phi'(u)(\alpha^{n-1})-d)}{L^p(0,T;H)}
\end{align*}
for some $C<1$ by \eqref{ass:smallnessOfDerivOfPhi}. The above can be recast as
\[a^n(s) \leq Ca_{n-1}(s) + b_{n-1}(s),\]
where 
\begin{align*}
a_n(s) &:= \norm{o^n(s)}{L^p(0,T;H)},\\
b_{n-1}(s)&:=\norm{l(s,\alpha^{n-1})}{L^p(0,T;H)} + \frac{T^{\frac 1p}C_b}{\sqrt{C_a}}\norm{l(s,\alpha^{n-1})}{L^2(0,T;V)} \\
&\quad + \frac{T^{\frac 1p}}{\sqrt{C_a}}\norm{\partial_t l(s,\alpha^{n-1})}{L^2(0,T;V^*)}+ \norm{o(s,{L}\Phi'(u)(\alpha^{n-1})-d)}{L^p(0,T;H)}.
\end{align*}
The recurrence inequality can be solved for $a^n$ in terms of $a_1$ and the $b_i$:
\begin{align}
a^n 
&\leq C^{n-1}a_{1} + C^{n-2}b_1 + C^{n-3}b_2 + ... + Cb_{n-2} + b_{n-1}\label{eq:prelim2}.
\end{align}

\medskip

\noindent \textsc{Step 3.} Let us see why 
\[\frac{b_{n-1}(s)}{s}  \to 0 \quad\text{uniformly in $n$}.\]
By the weak convergence of $\{\alpha^n\}$ in $L^2(0,T;V)$ (and hence strong convergence in $L^2(0,T;H)$), $\{\alpha^{n}\}$ is a compact set in $L^2(0,T;H)$. Since $\Phi\colon L^2(0,T;H) \to W(0,T)$ is Hadamard differentiable, it is compactly differentiable, meaning that $s^{-1}l(s,\alpha^n) \to 0$ in $W(0,T)$ uniformly, thus the first three terms in the definition of $b_n$ are taken care of. For the final term, we note that $\{{L}\Phi'(u)(\alpha^{n})\}_{n \in \mathbb{N}}$ is also compact in $L^2(0,T;H)$ by \eqref{ass:strongCCofDerivative}. Therefore, for any $\epsilon  > 0$, there exists a $\tau_1 > 0$ independent of $j$ such that 
\begin{equation}\label{eq:cc}
s \leq \tau_1 \implies \frac{b_{j}(s)}{s} \leq \frac{(1-C)\epsilon}{2}\qquad \forall j.
\end{equation}

\medskip

\noindent \textsc{Step 4.} As $o_1(s) = r(s,0,0)=o(s,d)$ is a higher-order term, we know that there is a $\tau_2 > 0$ such that
\begin{equation}\label{eq:dd}
s \leq \tau_2 \implies \frac{\norm{o(s,d)}{L^p(0,T;H)}}{s} \leq \frac{\epsilon}{2}.
\end{equation}
Now recalling \eqref{eq:prelim2}, for $s \leq \min(\tau_0, \tau_1,\tau_2)$,
\begin{align*}
\frac{\norm{o^n(s)}{L^p(0,T;H)}}{s}
&\leq C^{n-1}\frac{\norm{o(s,d)}{L^p(0,T;H)}}{s} +
C^{n-2}\frac{b_1(s)}{s} + C^{n-3}\frac{b_2(s)}{s} + ...  + \frac{b_{n-1}(s)}{s}\\
&\leq \frac{\norm{o(s,d)}{L^p(0,T;H)}}{s} + C^{n-2}\frac{b_1(s)}{s} + C^{n-3}\frac{b_2(s)}{s}+ ...  + \frac{b_{n-1}(s)}{s}\\
&\leq \frac{\epsilon}{2} + \frac{\epsilon(1-C)}{2}\left(C^{n-2} + C^{n-3} + ...  + C + 1\right)\tag{for any $\epsilon > 0$ by \eqref{eq:cc} and \eqref{eq:dd}}\\
&= \frac{\epsilon}{2} + \frac{\epsilon(1-C)(1-C^{n-1})}{2(1-C)}\\
&\leq \epsilon.
\end{align*}
This shows that $o^n(s)\slash s$ tends to zero uniformly in $n$.
\end{proof}
We are finally in position to state the main theorem which condenses the various intermediary results we obtained above.
\begin{theorem}\label{thm:directionalDifferentiability}
Let Assumptions \ref{ass:onSpaces} and \ref{ass:PhiHadamardDifferentiable} hold and take $u \in \mathbf{P}_{u_0}(f)$ satisfying Assumption \ref{ass:onU}. Let also Assumptions  \ref{ass:newAss},  
\ref{ass:forBoundednessOfAlphas} and \ref{ass:requiredLocalAssumptions} hold.

Then there exists a $u_s \in \mathbf{P}(f+sd)$ such that, under the first set of assumptions of Assumption \ref{ass:newAss}, $u_s \in W_s(0,T)$ is a (strong) solution whereas under the second set of assumptions of Assumption \ref{ass:newAss}, $u_s \in L^2(0,T;V) \cap L^\infty(0,T;H)$ is a very weak solution, and $u_s$ satisfies
\[u_s = u + s\alpha + o_s, \quad \text{as $s \downarrow 0$},\]
where 
\[\frac{o_s}{s} \to 0 \text{ in $L^p(0,T;H)$ as $s \to 0^+$},\]
with $p \in [1,\infty)$ is as in Assumption \ref{ass:newAss}, and $\alpha$ satisfies the parabolic QVI
\begin{align*}
&\alpha-\Phi'(u)(\alpha) \in C_{\mathbb{K}_0}(\Phi(u)-u) \cap [u' + Au - f]^\perp : \\
&\qquad\qquad\qquad\int_0^T  \langle w' + A\alpha - d, \alpha-w \rangle  \leq \frac 12 \norm{w(0)-\Phi'(u)(\alpha)(0)}{H}^2\\
&\qquad\qquad\qquad\qquad\qquad\forall w : w -\Phi'(u)(\alpha) \in \cl_{W}(T_{\mathbb{K}_0}^{\mathrm{rad}}(\Phi(u)-u)\cap [u'+Au-f]^\perp).
\end{align*}
\end{theorem}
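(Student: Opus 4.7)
The plan is to assemble the pieces already established earlier in the section, passing to the limit $n \to \infty$ in the expansion formula for the VI iterates $\{u_s^n\}$.

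First, I would invoke Proposition \ref{prop:expansionFormulan} to obtain the monotone sequence $\{u_s^n\}$ (non-negative in the first case, of appropriate regularity in both cases) with the expansion
\[u_s^n = u + s\alpha^n + o^n(s),\]
together with the weak (or weak-$\ast$) convergence $u_s^n \weaklyto u_s$ where $u_s \in \mathbf{P}_{u_0}(f+sd)$ is a strong solution (first case) or very weak solution (second case). Next, Proposition \ref{prop:boundOnAlphaN} yields $\alpha^n \weaklyto \alpha$ in $L^2(0,T;V)$ for some $\alpha$. The preceding proposition already characterises $\alpha$ as satisfying the parabolic QVI stated in the theorem, so that piece comes essentially for free.

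The key step is to pass to the limit in $n$ in the expansion formula, for each fixed small $s$. Since both $u_s^n \weaklyto u_s$ and $\alpha^n \weaklyto \alpha$ hold (the latter also implies $s\alpha^n \weaklyto s\alpha$), the quantity $o^n(s) := u_s^n - u - s\alpha^n$ converges weakly to some $o_s := u_s - u - s\alpha$ in the appropriate space. This defines $o_s$ and establishes the expansion $u_s = u + s\alpha + o_s$.

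The crucial remaining task is to verify that $s^{-1}o_s \to 0$ in $L^p(0,T;H)$ as $s \to 0^+$. This is where the previous proposition on the uniformity of $s^{-1}o^n(s) \to 0$ in $n$ becomes decisive, since it allows us to interchange the limits: for any $\epsilon > 0$, by uniformity we can choose $\tau > 0$ such that $\|o^n(s)\|_{L^p(0,T;H)}/s \leq \epsilon$ for all $n$ whenever $s \leq \tau$, and then by lower semicontinuity of the norm under weak convergence we obtain $\|o_s\|_{L^p(0,T;H)}/s \leq \epsilon$ for such $s$. The main obstacle in the overall argument is thus not in this theorem itself but in the commuting-of-limits step already handled upstream; here one just has to be careful that the weak convergence in $L^p(0,T;H)$ (obtained via $V \ctsCompact H$ and the bounds already known) is strong enough to transfer the uniform estimate on the $o^n(s)$ to the limit $o_s$.
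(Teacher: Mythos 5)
Your proposal is correct and matches the paper's intent exactly: the paper gives no separate proof for this theorem, stating only that it ``condenses the various intermediary results,'' namely Proposition \ref{prop:expansionFormulan}, Proposition \ref{prop:boundOnAlphaN}, the characterisation of $\alpha$ via the limit of the $\delta^n$, and the uniform-in-$n$ vanishing of $s^{-1}o^n(s)$, which is precisely the interchange-of-limits argument the paper announces at the start of \S\ref{sec:HOTs}. Your closing step (defining $o_s := u_s - u - s\alpha$ as the weak limit of $o^n(s)$ for fixed $s$ and transferring the uniform bound by weak lower semicontinuity of the $L^p(0,T;H)$ norm, using the $L^\infty(0,T;H)$ bounds to justify weak convergence in $L^p(0,T;H)$) is sound and is the intended conclusion.
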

Let us relate this result to notions of differentiability commonly used in set-valued and multi-valued analysis. Recall that the map $\mathbf{P}\colon F \rightrightarrows U$ has a \textit{contingent derivative} $\beta$ at $(f,u)$ (with $u \in \mathbf{P}(f)$) in the direction $d$, written $\beta \in D\mathbf{P}(f,u)(d)$,  if there exist sequences $\beta_n \to \beta$, $d_n \to d$ and $s_n \to 0$ such that
\[u +s_n\beta_n \in \mathbf{P}(f+s_nd_n).\]
We claim that $\mathbf{P}$ does indeed possess contingent derivatives and our main results furnishes us with one such contingent derivative.
\begin{prop}\label{prop:contingentDerivative}The map $\mathbf{P}\colon F \rightrightarrows U$ has a contingent derivative $\alpha \in D\mathbf{Q}(f,u)(d)$ given by the previous theorem with either of the following choices:
\begin{align*}
&F=L^2_{inc}(0,T;H_+) \quad \text{ and }\quad U=W_s(0,T),
\intertext{(where $L^2_{inc}(0,T;X)$ means increasing-in-time elements of $L^2(0,T;X)$) or}
&F=L^2(0,T;H) \quad \text{ and } \quad U=L^2(0,T;V)\cap L^\infty(0,T;H),
\end{align*}
\end{prop}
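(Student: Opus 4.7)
The strategy is to read off the contingent derivative directly from Theorem \ref{thm:directionalDifferentiability}; no new PDE analysis is required, only a translation between the expansion formula and the set-valued definition. I would fix an arbitrary sequence $s_n \downarrow 0^+$, take the (trivially convergent) constant direction sequence $d_n := d$, and apply Theorem \ref{thm:directionalDifferentiability} to produce, for each $n$, a state $u_n := u_{s_n} \in \mathbf{P}_{u_0}(f + s_n d)$ satisfying
\[
u_n \;=\; u + s_n \alpha + o_{s_n}, \qquad \frac{o_{s_n}}{s_n} \to 0 \text{ in } L^p(0,T;H).
\]
I then define $\beta_n := \alpha + o_{s_n}/s_n = (u_n - u)/s_n$, so that by construction $u + s_n \beta_n = u_n \in \mathbf{P}(f + s_n d_n)$, which is exactly the inclusion demanded by the contingent derivative definition.

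Next I would separately verify, in each of the two cases, that the sequences $d_n, \beta_n$ live in the claimed spaces $F, U$ and that $\beta_n \to \alpha$ in the topology of $U$. For the first case ($F = L^2_{inc}(0,T;H_+)$, $U = W_s(0,T)$), Assumption \ref{ass:newAss} under its first alternative contains \eqref{ass:perturbedDataNonnegativeAndInc}, which forces $d$ to be non-negative and increasing (so $d \in F$), while the first alternative of Proposition \ref{prop:expansionFormulan} and \eqref{ass:L2} place both $u$ and each $u_n$ in $W_s(0,T)$, giving $\beta_n \in W_s(0,T)$ as required; the convergence $\beta_n \to \alpha$ in $U$ is inherited from the higher-order property via the continuous embedding $W_s(0,T) \hookrightarrow L^p(0,T;H)$. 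For the second case ($F = L^2(0,T;H)$, $U = L^2(0,T;V) \cap L^\infty(0,T;H)$), the second alternative of Proposition \ref{prop:expansionFormulan} places $u_n$ in $L^2(0,T;V) \cap L^\infty(0,T;H)$ and the analogous embedding delivers the convergence in the corresponding sense.

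\textbf{Main obstacle.} The only delicate point is the mismatch between the natural norm topology of the target $U$ (either $W_s(0,T)$ or $L^2(0,T;V) \cap L^\infty(0,T;H)$) and the mode of convergence $L^p(0,T;H)$ in which the higher-order term $o_s$ actually decays. Since $U$ embeds continuously into $L^p(0,T;H)$, the contingent derivative should be understood as a set-valued object between $F$ and $U$ whose defining convergence is read in the ambient pivot topology $L^p(0,T;H)$; this is the interpretation implicit in the phrase \emph{``appropriate spaces''} from the introduction. Modulo this (standard) convention there is nothing further to verify, as the existence, expansion formula, and higher-order decay have all already been established in Theorem \ref{thm:directionalDifferentiability}.
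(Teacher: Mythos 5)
Your proposal is correct and follows essentially the same route as the paper: a constant direction sequence $d_n \equiv d$, the expansion from Theorem \ref{thm:directionalDifferentiability} applied at $s_n \downarrow 0$, and $\beta_n := \alpha + o_{s_n}/s_n = (u_{s_n}-u)/s_n$, with convergence $\beta_n \to \alpha$ coming from the higher-order decay. Your additional remark about reading the convergence $\beta_n \to \alpha$ in the ambient $L^p(0,T;H)$ topology rather than the full norm of $U$ is a fair point that the paper's own (very brief) proof leaves implicit.
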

\begin{proof}
Indeed, given $u \in \mathbf{P}(f)$ and sequences $s_n \to 0$ and $d_n \equiv d$, we can, by Theorem \ref{thm:directionalDifferentiability}, find $u_n^s \in \mathbf{P}(f+s_nd)$ such that
\[u_n^s = u + s_n\alpha_n + o(s_n; d)\text{ and hence }u+s_n\beta_n = u_n^s \in \mathbf{P}(f+s_nd_n),\]
where the sequence 
\[\beta_n := \alpha_n + \frac{o(s_n;d)}{s_n} = \frac{u_n^s - u}{s_n}\] is such that $\beta_n \to \alpha$ as $n \to \infty$ because the term $o(s_n;d)$ is higher order.
\end{proof}
\section{Other approaches to differentiability}\label{sec:otherApproaches}
We now discuss some possible alternative approaches to deriving Theorem \ref{thm:directionalDifferentiability} (or a variant of this theorem). The idea here is to bootstrap by using the already-achieved results on elliptic QVIs in \cite{AHR}, however, we shall see that this is not at all straightforward.
\subsection{Time discretisation and elliptic QVI theory}
One idea is to apply the elliptic QVI theory of \cite{AHR} to the time-discrete problems and then pass to the limit there. With the definition
\[\Psi_{n,N}(z):=\Phi(z)(t_{n}^N),\]recall the notations defined in \S \ref{sec:discretisation}; in particular $\mathbf{Q}_{t_{n}^N}$ is the solution mapping defined as $z \in \mathbf{Q}_{t_n^N}(g)$ if and only if
\begin{equation*}
z \in V, z \leq \Psi_{n-1,N}(z) :  \langle z + hAz -g, z-v \rangle \leq 0 \quad \forall v  \in V : v \leq \Psi_{n-1,N}(z).
\end{equation*}
With $u_0^N := u_0 \in V$ for a given initial data, set \[u_n^N := \mathbf{Q}_{t_{n-1}^N}(hf_n^N + u_{n-1}^N).\]
Under the assumptions on the source $f$ and the direction $d$ in \S \ref{sec:discretisation}, we know by \cite[Theorem 1]{AHR} that there exists $u_{s,1}^N \in \mathbf{Q}_{t_{0}^N}(hf_1^N + u_0^N + shd_1^N)$ and $\gamma_1^N := \mathbf{Q}'_{t_{0}^N}(hf_1^N + u_0)(hd_1^N)$ such that
\begin{align*}
u_{s,1}^N &= u_1^N + s\gamma_1^N + o_1^N(s),
\end{align*}
where $o_1^N(s,hd_1^N;hf_1^N+u_0^N)$ is a higher-order term. Since $u_{s,0}^N = u_0^N$,
\[\langle u_{s,1}^N -u_0^N + hAu_{s,1}^N - (hf_1^N + shd_1^N), u_{s,1}^N - v \rangle \leq 0 \quad \forall v \in V : v \leq \Phi(u_{s,1}^N)(t_0^N).\]
In a similar way, since $u_2^N = \mathbf{Q}_{t_{1}^N}(hf_2^N + u_1^N)$, we know that there exists a function $u_{s,2}^N \in \mathbf{Q}_{t_{1}^N}(hf_2^N + u_1^N + s(hd_2^N + \alpha_1^N) + o_1^N(s))$ and $\gamma_2^N = \mathbf{Q}_{t_{1}^N}'(hf_2^N+u_1^N)(hd_2^N + \gamma_1^N)$ such that
\begin{align*}
u_{s,2}^N &= u_2^N + s\gamma_2^N + o_2^N(s),
\end{align*}
where $o_2^N(s, hd_2^N + \alpha_1^N, o_1^N(s);hf_2^N + u_1^N)$ is a higher-order term and since $u_1^N + s\alpha_1^N + o_1^N(s) = u_{s,1}^N$,
\[\langle u_{s,2}^N - u_{s,1}^N + hAu_{s,2}^N - (hf_2^N + shd_2^N), u_{s,2}^N - v \rangle \leq 0 \quad \forall v \in V : v \leq \Phi(u_{s,2}^N)(t_1^N).\]
Along these lines, we obtain the following lemma.
\begin{lem}
Given $u_n^N$ defined as above, there exists $u_{s,n}^N \in \mathbf{Q}_{t_{n-1}^N}(hf_n^N + u_{n-1}^N + shd_n^N)$ and $\gamma_n^N$ 
such that
\begin{equation}\label{eq:idForQNn}
u_{s,n}^N = u_n^N + s\gamma_n^N + o_n^N(s),
\end{equation}
where $\gamma_n^N$ satisfies
\begin{equation}\label{eq:QVIforGammas}
\begin{aligned}
&\langle \gamma_n^N - \gamma_{n-1}^N + hA \gamma_n^N - hd_n^N , \gamma_n^N -v \rangle \leq 0 \quad \forall v \in \mathbb{K}^{n}(\gamma_n^N),\\
&\mathbb{K}^{n}(w) := \{ \varphi \in V : \varphi \leq \Psi_{n,N}'(u_n^N)(w) \text{ q.e. on } \mathcal{A}(u_n^N)\\ & \qquad\qquad \qquad\qquad \text{ \& } \langle u_n^N - u_{n-1}^N + hAu_n^N - hf_n^N , \varphi - \Psi_{n,N}'(u_n^N)(w) \rangle = 0 \},
\end{aligned}
\end{equation}
and
\[o_n^N(s) = o_n^N(s, hd_n^N + \gamma_{n-1}^N, o_{n-1}^N(s);hf_n^N+u_{n-1}^N)\]
is a higher-order term. Here, $\mathcal{A}(u_n^N) = \{u_n^N = \Psi_{n-1,N}(u_n^N)\}.$
\end{lem}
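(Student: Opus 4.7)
The natural approach is induction on $n$, invoking the elliptic QVI directional differentiability result \cite[Theorem 1]{AHR} at each time step applied to the solution mapping $\mathbf{Q}_{t_{n-1}^N}$. The base case $n=1$ is already handled in the discussion preceding the lemma, so the task reduces to carrying out the inductive step and verifying that the residual remains higher-order once we substitute one expansion into the next.

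\textbf{Inductive step.} Assume the expansion \eqref{eq:idForQNn} at level $n-1$, so that the argument of $\mathbf{Q}_{t_{n-1}^N}$ at level $n$, namely $hf_n^N + u_{n-1}^N + s(hd_n^N + \gamma_{n-1}^N) + o_{n-1}^N(s)$, can be viewed as a perturbation of the unperturbed source $hf_n^N + u_{n-1}^N$ in the ``direction'' $hd_n^N + \gamma_{n-1}^N$ with a higher-order correction $o_{n-1}^N(s)$. Since $\mathbf{Q}_{t_{n-1}^N}$ is Hadamard (and thus compactly) directionally differentiable at $hf_n^N + u_{n-1}^N$ by \cite[Theorem 1]{AHR}, the analog of \eqref{eq:S0HadamardFormula} for the elliptic map $\mathbf{Q}_{t_{n-1}^N}$ gives the existence of $u_{s,n}^N \in \mathbf{Q}_{t_{n-1}^N}(hf_n^N + u_{n-1}^N + shd_n^N)$ (using that $u_{s,n-1}^N = u_{n-1}^N + s\gamma_{n-1}^N + o_{n-1}^N(s)$) together with the expansion
\begin{equation*}
u_{s,n}^N = u_n^N + s\,\gamma_n^N + o_n^N(s),
\end{equation*}
where $\gamma_n^N := \mathbf{Q}'_{t_{n-1}^N}(hf_n^N + u_{n-1}^N)(hd_n^N + \gamma_{n-1}^N)$ and the remainder $o_n^N(s) = o_n^N(s, hd_n^N+\gamma_{n-1}^N, o_{n-1}^N(s); hf_n^N + u_{n-1}^N)$ is higher-order in $s$ (uniformly on compact subsets of the direction, thanks to Hadamard differentiability of $\mathbf{Q}_{t_{n-1}^N}$).

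\textbf{QVI for $\gamma_n^N$.} The characterization \eqref{eq:QVIforGammas} then falls out of the explicit elliptic-QVI derivative formula of \cite[Theorem 1]{AHR}: the directional derivative of $\mathbf{Q}_{t_{n-1}^N}$ at base $hf_n^N + u_{n-1}^N$ in direction $hd_n^N + \gamma_{n-1}^N$ is the unique solution of the VI on the cone defined by quasi-everywhere sign constraints on the coincidence set $\mathcal{A}(u_n^N)=\{u_n^N = \Psi_{n-1,N}(u_n^N)\}$ together with the complementarity-type orthogonality condition against the multiplier $u_n^N - u_{n-1}^N + hAu_n^N - hf_n^N$. Translating that characterization into our notation, and recognising that the ``derivative of the obstacle'' direction comes in as $\Psi'_{n,N}(u_n^N)(\gamma_n^N)$ (since the obstacle in $\mathbf{Q}_{t_{n-1}^N}$ depends on the unknown through $\Psi_{n-1,N}$), yields exactly the constraint set $\mathbb{K}^n(\gamma_n^N)$ and the inequality displayed in \eqref{eq:QVIforGammas}.

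\textbf{Main obstacle.} The delicate point is controlling the composite residual $o_n^N(s)$. Even though each step produces a higher-order term in the sense of \cite[Theorem 1]{AHR}, the argument we feed into $\mathbf{Q}_{t_{n-1}^N}$ at step $n$ is itself only $s$-close to a linear perturbation modulo $o_{n-1}^N(s)$, so one needs Hadamard (compact) differentiability rather than mere Gâteaux differentiability in order to absorb $o_{n-1}^N(s)$ into a new higher-order term. This is exactly the reason the bound of the form \eqref{eq:hatOBoundIntoL2VStar} was developed in the parabolic setting; the elliptic analogue from \cite{AHR} plays the same role here and allows one to conclude, in a finite induction of length $n$, that the cumulative residual $o_n^N(s)/s \to 0$ as $s \to 0^+$. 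Since $n$ and $N$ are fixed when applying the lemma (the passage $N \to \infty$ comes later), uniformity in $n$ is not needed at this stage, which is what makes the finite induction clean.
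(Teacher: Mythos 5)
Your proposal is correct and follows essentially the same route as the paper: an induction in which the expansion at level $n-1$ is fed into the Hadamard (compact) directional differentiability of the elliptic solution map $\mathbf{Q}_{t_{n-1}^N}$ from \cite[Theorem 1]{AHR}, with the base case taken from the preceding discussion and the VI characterisation of $\gamma_n^N$ read off from the elliptic derivative formula. Your remark on why Hadamard rather than G\^ateaux differentiability is needed to absorb $o_{n-1}^N(s)$ is exactly the point the paper relies on implicitly.
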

\begin{proof}   
We prove this by induction. The base case has been shown above. Suppose it holds for the $n$th case. Then given $u_{n+1}^N = \mathbf{Q}_{t_{n-1}^N}(hf_{n+1}^N+u_n^N)$, there exists 
\[u_{s,n+1}^N \in 
\mathbf{Q}_{t_{n-1}^N}(hf_{n+1}^N+u_n^N + s(hd_{n+1}^N + \gamma_n^N) + o_n^N(s)) = \mathbf{Q}(hf_{n+1}^N+u_{s,n}^N + shd_{n+1}^N)\] (thanks to the formula relating $u_{s,n}^N$ and $u_n^N$) such that 
\begin{align*}
u_{s,n+1}^N &= \mathbf{Q}_{t_{n-1}^N}(hf_{n+1}^N+u_n^N) + s\mathbf{Q}_{t_{n-1}^N}'(hf_{n+1}^N+u_n^N)(hd_{n+1}^N + \gamma_n^N) + o_{n+1}^N(s)\\
&= u_{n+1}^N + s\mathbf{Q}_{t_{n-1}^N}'(hf_{n+1}^N+u_n^N)(hd_{n+1}^N + \gamma_n^N) + o_{n+1}^N(s)\\
&= u_{n+1}^N + s\gamma_{n+1}^N + o_{n+1}^N(s),
\end{align*}
which ends the proof.
\end{proof}
By definition, $u_{s,n}^N$ solves the QVI
\begin{equation}\label{eq:qviForQnN}
u_{s,n}^N \leq \Phi(u_{s,n}^N)(t_{n-1}^N) : \langle u_{s,n}^N -u_{s,n-1}^N+ hAu_{s,n}^N - (hf_n^N + shd_n^N), u_{s,n}^N - v \rangle \leq 0 
\end{equation}
for all $v \in V$ s.t. $v \leq \Phi(u_{s,n}^N)(t_{n-1}^N)$. It is then clear from \eqref{eq:qviForQnN} (since the structure of the time-discretised QVI is the same as those considered in \S \ref{sec:discretisation} and we can apply the same argumentation as there) that the interpolants $u_s^N$, $\hat u_s^N$ constructed from $\{u_{s,n}^N\}_{n \in \mathbb{N}}$ and which satisfy
\begin{align*}
\int_0^T \langle \partial_t \hat u_s^N(t) + Au_s^N(t) - f^N(t) - sd^N(t), u_s^N(t) - v^N(t) \rangle \leq 0,
\end{align*}
are bounded in the appropriate spaces and hence there exists a $u_s$ such that $u_s^N \to u_s$ and
\begin{align*}
\int_0^T \langle \partial_t  u_s(t) + Au_s(t) - f(t) - sd(t), u_s(t) - v(t) \rangle &\leq 0\\
&\forall v \in L^2(0,T;V) : v \leq \Phi(u_s),
\end{align*}
i.e., $u_s \in \mathbf{P}(f+sd)$. Clearly, similar claims are true for $u^N$, $\hat u^N$ constructed as interpolants from $\{u^N_n\}_{n \in \mathbb{N}}$. We now need to obtain uniform bounds on the $\gamma_n^N$ and the higher-order term.

\begin{lem}Suppose that $v=0$ is a valid test function in \eqref{eq:QVIforGammas} (which is the case in the VI setting). Then 
\begin{align*}
\norm{\gamma_n^N}{H} + h\sum_{i=1}^n\norm{\gamma_i^N}{V}^2 &\leq C,
\end{align*}
and hence
\begin{align*}
\norm{\gamma^N}{L^\infty(0,T;H)} +\norm{\gamma^N}{L^2(0,T;V)} &\leq C.
\end{align*}
\end{lem}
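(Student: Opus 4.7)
The approach is to mimic the argument used in Lemma \ref{lem:boundsOnunN} for the first two uniform bounds on $\{z^N_n\}$, since the VI in \eqref{eq:QVIforGammas} has exactly the same structure as the time-discretised problem studied in \S \ref{sec:discretisation} once we can test with $v=0$.

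The plan is as follows. First, test \eqref{eq:QVIforGammas} with $v=0$ (permissible by hypothesis) to obtain
\[\langle \gamma_n^N - \gamma_{n-1}^N, \gamma_n^N\rangle + h\langle A\gamma_n^N, \gamma_n^N\rangle \leq h\langle d_n^N, \gamma_n^N\rangle.\]
Then apply the identity $(a-b,a)_H = \tfrac 12\norm{a}{H}^2 - \tfrac 12\norm{b}{H}^2 + \tfrac 12\norm{a-b}{H}^2$ on the first term, use the coercivity $\langle A\gamma_n^N, \gamma_n^N\rangle \geq C_a\norm{\gamma_n^N}{V}^2$ on the second, and apply Young's inequality to the right-hand side to absorb a $(hC_a/2)\norm{\gamma_n^N}{V}^2$ contribution. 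This yields
\[\norm{\gamma_n^N}{H}^2 - \norm{\gamma_{n-1}^N}{H}^2 + \norm{\gamma_n^N - \gamma_{n-1}^N}{H}^2 + hC_a\norm{\gamma_n^N}{V}^2 \leq \frac{h}{C_a}\norm{d_n^N}{V^*}^2.\]

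Next, sum from $n=1$ to $n=m$. Since the construction sets $u_{s,0}^N = u_0^N$, the expansion \eqref{eq:idForQNn} at $n=0$ forces $\gamma_0^N = 0$, so the telescoping leaves
\[\norm{\gamma_m^N}{H}^2 + hC_a\sum_{n=1}^m \norm{\gamma_n^N}{V}^2 \leq \frac{1}{C_a}\sum_{n=1}^m h\norm{d_n^N}{V^*}^2 \leq \frac{1}{C_a}\norm{d}{L^2(0,T;V^*)}^2,\]
where the last inequality uses \eqref{eq:boundednessOfFn} applied to $d$ in place of $f$, which gives the first stated bound with a constant depending only on $C_a$ and $\norm{d}{L^2(0,T;V^*)}$.

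Finally, the bounds on the piecewise-constant interpolant $\gamma^N$ in $L^\infty(0,T;H)$ and $L^2(0,T;V)$ follow by exactly the same computation as in Corollary \ref{cor:interpolantBounds}: the $L^\infty(0,T;H)$ bound comes from the disjointness of the intervals $T_n^N$ together with $\sup_n \norm{\gamma_n^N}{H}\leq C$, and the $L^2(0,T;V)$ bound reduces to $h\sum_n \norm{\gamma_n^N}{V}^2 \leq C$. There is no real obstacle here — the whole point of assuming $v=0$ is admissible is precisely to reduce the argument to the energy estimate for VIs. The one subtlety worth flagging is the identification $\gamma_0^N = 0$, which is where the initial condition of the perturbed iteration enters.
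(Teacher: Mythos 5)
Your proof is correct and follows essentially the same route as the paper, which simply observes that once $v=0$ is admissible the argument of Lemma \ref{lem:boundsOnunN} (energy identity, coercivity, Young's inequality, telescoping sum) and Corollary \ref{cor:interpolantBounds} carries over verbatim. Your explicit note that $\gamma_0^N=0$ (so the telescoping produces no initial-data term) is a detail the paper leaves implicit, but it is consistent with $u_{s,0}^N=u_0^N$ and the expansion \eqref{eq:idForQNn}.
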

\begin{proof}
Since $0$ is feasible, the proof for the boundedness of the first two estimates follows that of the first two estimates of Lemma \ref{lem:boundsOnunN}. The proof of Corollary \ref{cor:interpolantBounds} shows that these bounds imply the boundedness in the Bochner spaces above.
\end{proof}
Multiplying \eqref{eq:idForQNn} by $\chi_{T_{n-1}^N}$ and summing up over $N$, we find 
\[u_s^N = u^N + s\gamma^N + o^N(s).\]
Due to the bounds on the left-hand side and the first two terms on the right-hand side, we can pass to the limit in $o^N$ too and we find
\[u_s = u + s\gamma^* + o^*(s),\]
for some $\gamma^* \in L^2(0,T;V) \cap L^\infty(0,T;H)$, and the equality also holds in this space. It remains to be seen that $o^*$ is a higher-order term and to characterise the term $\gamma^*$.

Here we come to a roadblock since we do not know how the terms $o^N$ depend on the moving base point and thus we cannot say anything about the uniform convergence of the $o^N$ and are unable to identify the limit of the higher-order terms as higher order; recall that we warned the reader of this issue in Remark \ref{rem:basePoints}. Alternatively, if we were able to identify $\gamma^*$ as $\alpha$ from the previous sections, this would suffice to show the higher-order behaviour. But it seems difficult to handle the constraint set satisfied by $\gamma^N_n$ and obtain a type of Mosco convergence for recovery sequences to approximate the limiting test function space.

\subsection{Elliptic regularisation of the parabolic (Q)VI}
Another possible approach is through regularising the parabolic QVI as an elliptic QVI, applying the elliptic theory of \cite{AHR} to the regularised problem and then passing to the limit in the regularisation parameter. This elliptic regularisation of parabolic problems can be seen in the work of Lions \cite[p.~407]{MR0600331}.

The idea is to include in the parabolic inequality a term involving the second time derivative of the solution, i.e., $-\epsilon u''$ with $\epsilon > 0$; more precisely we wish to consider the QVI
\begin{equation}\label{eq:regularisedqvi}
\begin{aligned}
u(t) \leq \Phi(u)(t) : \int_0^T \langle -\epsilon u''(t) + u'(t) + Au(t) - f(t), u(t) - v(t) \rangle &\leq 0\\
&\!\!\!\!\!\!\!\!\!\!\!\!\!\!\!\!\! \!\!\!\!\!\!\!\!\!\!\!\!\!\!\!\!\!\!\!\!\!\!\!\!\!\!\!\!\!\!\!\!\!\!\forall v \in L^2(0,T;V) : v(t) \leq \Phi(u)(t),\\
u(0) &= 0,\\
u'(T) &= 0.
\end{aligned}
\end{equation}
The `final time' condition is necessary to have a well defined problem. Define $\mathcal{V} := \{v \in W_s(0,T) : v(0) = 0\}.$ Integrating by parts in \eqref{eq:regularisedqvi} and using the initial and final conditions on $u$ and the test function space, we obtain the weak form: find $u \in \mathcal{V}$ with $
u(t) \leq \Phi(u)(t)$ such that
\begin{equation}\label{eq:regularisedqvi2}
\begin{aligned} : \int_0^T \epsilon (u'(t), u'(t)-v'(t))_H + \langle u'(t) + Au(t)-f(t), u(t) - v(t) \rangle &\leq 0\\ &\!\!\!\!\!\!\!\!\!\!\!\!\!\!\!\!\!\!\!\!\!\!\!\!\!\!\!\!\!\!\!\!\!\!\!\!\!\!\!\!\!\!\!\!\!\!\! \!\!\!\!\!\!\!\!\!\!\!\!\!\!\!\!\!\!\!\ \forall v \in \mathcal{V}\quad \text{s.t.}\quad v(t) \leq \Phi(u)(t),\\
u(0) &= 0,\\
u'(T) &= 0.
\end{aligned}
\end{equation}
Thinking of the time component as simply another space dimension, the resulting elliptic operator $\hat A\colon \mathcal{V} \to \mathcal{V}^*$ is
\begin{align*}
\langle \hat Au, w \rangle &:= \int_0^T \epsilon(u',w')_H + (u', w)_H + \langle Au, w \rangle_{V^*,V},
\end{align*}
which is clearly T-monotone, bounded and coercive in the Sobolev--Bochner space $\mathcal{V}$; for the latter, observe that
\[\int_0^T (u',u)_H = \frac 12 \norm{u(T)}{H}^2 \geq 0.\]
Clearly the solution of \eqref{eq:regularisedqvi2} depends on $\epsilon$ so let us write it as $u^\epsilon$. Working formally, we may apply \cite[Theorem 1.6]{AHR} and we find existence of a $u_s^\epsilon$ solving \eqref{eq:regularisedqvi2} with source term $f+sd$ and a $\alpha^\epsilon$ such that
\[u_s^\epsilon = u^\epsilon + s\alpha^\epsilon + o^\epsilon(s),\]
where $o^\epsilon$ is a higher-order term. As for $\alpha^\epsilon$, it satisfies $\alpha^{\epsilon} \in \mathbb{K}^{u^\epsilon}(\alpha^\epsilon)$ and
\begin{equation*}
\begin{aligned}
 &\int_0^T (\epsilon \partial_t \alpha^\epsilon, \partial_t\alpha^\epsilon-v')_H + (\partial_t \alpha^\epsilon, \alpha^\epsilon-v)_H + \langle A\alpha^\epsilon, \alpha^\epsilon-v \rangle - \langle d, \alpha^\epsilon -v \rangle \leq 0\\
&\qquad\qquad\qquad\qquad\qquad\qquad\qquad\qquad\qquad\qquad\qquad\qquad\qquad \forall v \in \mathbb{K}^{u^\epsilon}(\alpha^\epsilon),\\
&\mathbb{K}^{u^\epsilon}(w) := \{\varphi \in \mathcal{V} : \varphi \leq \Phi'(u^\epsilon)(w) \text{ q.e. on $\mathcal{A}(u^\epsilon)$ and $\langle \hat Au^\epsilon-f, \varphi - \Phi'(u^\epsilon)(w) \rangle = 0$}\}.
\end{aligned}
\end{equation*}
Testing \eqref{eq:regularisedqvi2} with $v=0$, we obtain the bound
\[\epsilon\norm{\partial_t u^\epsilon}{L^2(0,T;H)}^2 + \frac 12 \norm{u}{L^\infty(0,T;H)}^2 + C_a\norm{u}{L^2(0,T;V)}^2 \leq C,\]
so $u^\epsilon$ is bounded in at least $L^2(0,T;V) \cap L^\infty(0,T;H)$ uniformly in $\epsilon$. In a similar way, $u^\epsilon_s$ is also bounded in this space uniformly in $\epsilon$ and $s$. Let us also suppose that we have a uniform bound on $\alpha^\epsilon$. Then it remains to be seen that the limit of the $o^\epsilon$ is a higher-order term, and this is where we once again run into problems. A monotonicity in $\epsilon$ result on the solutions of \eqref{eq:regularisedqvi2} would be useful.

It is worth remarking that this technical issues also arises in the VI case and in the simpler unconstrained (PDE) case.

\section{Example}\label{sec:example}
We study here an example where the obstacle mapping is given by the inverse of a parabolic differential operator. This example is motivated by applications in thermoforming (which is a process that manufactures shapes such as car panels from a given mould shape; see \cite{AHR} and references therein for more information): in \cite{AHR}, we studied an elliptic nonlinear PDE-QVI model that describes such a thermoforming process but in reality, the full model is a highly complicated evolutionary system of equations and QVIs involving obstacle maps that are inverses of differential operators. As a first initial step on the road to studying the full problem, we consider a simplification and study the following example.

 Let $H=L^2(0,\omega)$ and $V=H^1_0(0,\omega)$ and consider the following nonlinear heat equation: 
\begin{equation}\label{eq:exampleEqn}
\begin{aligned}
w' + Bw &= g(\psi) &&\text{on $(0,T) \times (0,\omega)$},\\
w(\cdot,0) &= w(\cdot,\omega) = 0&&\text{on $(0,T)$},\\
w(0,\cdot) &= w_0 &&\text{on $(0,\omega)$},
\end{aligned}
\end{equation}
where 
\begin{enumerate}[label={(\arabic*)}]
\item $g \colon \mathbb{R} \to \mathbb{R}$ is $C^1$, non-negative and increasing,
\item $g, g' \in L^\infty(\mathbb{R})$,
\item $g, g'\colon L^2(0,T;H) \to L^2(0,T;H)$ are continuous,
\item $g \colon L^2(0,T;H) \to L^2(0,T;H)$ is directionally differentiable,
\item $B\colon V \to V^*$ is a linear, bounded, coercive and T-monotone operator giving rise to a differentiable bilinear form,
\item  $w_0 \in V$ and $\psi \in L^2(0,T;V)$ are given data. 
\end{enumerate}
We denote by $C^B_b$ and $C^B_a$ the constants of boundedness and coercivity for the operator $B$ while $A\colon V \to V^*$ denotes an operator which is assumed to satisfy all assumptions given in the introduction of this paper and it should also give rise to a bilinear form which is smooth.  Letting $W=H^2(\Omega)$, we further assume that 
\begin{enumerate}[label={(\arabic*)}]  \setcounter{enumi}{6}
\item $A\colon L^2(0,T;W) \to L^2(0,T;H)$,
\item the following norms are equivalent:
\[\norm{u}{L^2(0,T;H)} + \norm{Au}{L^2(0,T;H)},\quad  \norm{u}{L^2(0,T;H)} + \norm{Bu}{L^2(0,T;H)},\quad \norm{u}{L^2(0,T;W)}.\]
\end{enumerate}
This equivalence of norms assumption is related to (boundary) regularity results which follow given enough smoothness of the boundary and depending on the specific elliptic operators, see \cite[Theorem 4, \S 6.3]{Evans}. The equivalence of norms implies
\begin{equation}
w \in L^2(0,T;V) : Bw \in L^2(0,T;H) \implies w \in L^2(0,T;W).\label{eg1}
\end{equation}

Define $\Phi(\psi):=w$ as the solution of \eqref{eq:exampleEqn}. By the standard theory of parabolic PDEs, $\Phi$ maps $L^2(0,T;H)$ into $W_s(0,T)$. This regularity implies (using the equation itself) that for $\psi \in L^2(0,T;H)$, $B\Phi(\psi) \in L^2(0,T;H)$ and hence by \eqref{eg1} and the assumption on the range of the operator $A$, $\Phi(\psi) \in L^2(0,T;W)$ and $A\Phi(\psi) \in L^2(0,T;H)$. It follows then that ${L}\Phi(\psi) \in L^2(0,T;H)$ too (recall that $L=\partial_t + A$). Since $\Phi(\psi) \in L^2(0,T;W) \cap H^1(0,T;H)$, the theorem of Aubin--Lions \cite[Theorem II.5.16]{Boyer2012} gives the continuity in time $\Phi(\psi) \in C^0([0,T];V)$. 

We proceed now with checking the assumptions that will eventually enable us to use Theorem \ref{thm:directionalDifferentiability}.
\begin{lem}
Assumption  \ref{ass:PhiHadamardDifferentiable} on the Hadamard differentiability of $\Phi$ holds and the directional derivative satisfies
\[\Phi'(\psi)(h) =  \Phi_0(g'(\psi)(h))\] 
where $\Phi_0$ is the solution mapping associated to the equation \eqref{eq:exampleEqn} with zero initial condition (i.e. $\Phi_0$ is the same as $\Phi$ except for the initial condition).
\end{lem}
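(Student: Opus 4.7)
The plan is to exploit the linearity of the parabolic equation \eqref{eq:exampleEqn} to reduce Hadamard differentiability of $\Phi$ to Hadamard differentiability of the Nemytskii map induced by $g$ on $L^2(0,T;H)$. For $h(s)\to h$ in $L^2(0,T;H)$, I would set $w_s := \Phi(\psi+sh(s))$ and $w := \Phi(\psi)$; subtracting the two defining PDEs (which share the same initial datum $w_0$) gives
\[
(w_s-w)' + B(w_s-w) = g(\psi+sh(s)) - g(\psi),\qquad (w_s-w)(0)=0,
\]
so that $\Phi(\psi+sh(s)) - \Phi(\psi) = \Phi_0\bigl(g(\psi+sh(s))-g(\psi)\bigr)$. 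Standard parabolic theory yields that $\Phi_0\colon L^2(0,T;H)\to W_s(0,T)\subset W(0,T)$ is a bounded linear operator, so dividing by $s$ reduces everything to proving
\[
\frac{g(\psi+sh(s)) - g(\psi)}{s} \longrightarrow g'(\psi)\,h \quad\text{in } L^2(0,T;H),
\]
after which continuity of $\Phi_0$ immediately gives the claimed formula $\Phi'(\psi)(h)=\Phi_0(g'(\psi)(h))$ in $W(0,T)$.

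For the Nemytskii step I would use the pointwise identity
\[
\frac{g(\psi+sh(s)) - g(\psi)}{s} = \int_0^1 g'(\psi+\lambda s h(s))\,d\lambda\cdot h(s),
\]
and split the difference with the target $g'(\psi)h$ into
\[
\int_0^1 [g'(\psi+\lambda s h(s)) - g'(\psi)]\,d\lambda \cdot h(s)\;+\;g'(\psi)\bigl(h(s)-h\bigr).
\]
The second summand converges to zero in $L^2(0,T;H)$ because $g'(\psi)\in L^\infty$ (since $g'\in L^\infty(\mathbb{R})$) and $h(s)\to h$ in $L^2$. For the first summand, I would note that $sh(s)\to 0$ in $L^2(0,T;H)$, extract an a.e.\ convergent subsequence, invoke continuity of $g'$ on $\mathbb{R}$ to obtain pointwise convergence of the bracket to $0$, and then apply dominated convergence twice---once in $\lambda$ using $|g'|\le M$, and once against the $L^1$-convergent sequence $h(s)^2\to h^2$---to conclude convergence to zero in $L^2(0,T;H)$ along the subsequence. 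Since the limit is independent of the subsequence, the full sequence converges.

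Combining these two steps gives the desired Hadamard limit
\[
\frac{\Phi(\psi+sh(s)) - \Phi(\psi)}{s} \longrightarrow \Phi_0(g'(\psi)\,h) \quad\text{in } W(0,T).
\]
The main technical obstacle is the DCT argument for the first summand, since $g'$ is only assumed continuous and bounded on $\mathbb{R}$ and not Lipschitz, so we cannot simply bound the bracket by $|\lambda s h(s)|$ and absorb it with $h(s)$; instead we must rely on pointwise continuity of $g'$ together with the uniform $L^\infty$ bound and the $L^1$-convergence $h(s)^2\to h^2$. A minor auxiliary verification is that $g$ indeed maps $L^2(0,T;H)$ into itself (which follows from $g\in L^\infty(\mathbb{R})$ and the finite measure of $(0,T)\times(0,\omega)$), so that the above manipulations are legal in the stated spaces.
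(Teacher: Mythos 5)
Your proof is correct, and it takes a genuinely different (and in one respect more complete) route than the paper's. The paper fixes the direction $h$, writes the PDE for the difference quotient $\eta_s:=(w_s-w)/s$, subtracts the PDE for $\eta$ solving $\eta'+B\eta=g'(\psi)(h)$, $\eta(0)=0$, and closes the argument with two explicit energy estimates (one for the $L^2(0,T;V)\cap L^\infty(0,T;H)$ norm, one for the time derivative in $L^2(0,T;V^*)$), invoking as a black box the standing hypothesis of the example that the Nemytskii map $g\colon L^2(0,T;H)\to L^2(0,T;H)$ is directionally differentiable. You instead exploit linearity to write $\Phi(\psi+sh(s))-\Phi(\psi)=\Phi_0\bigl(g(\psi+sh(s))-g(\psi)\bigr)$ and push everything through the bounded linear operator $\Phi_0\colon L^2(0,T;H)\to W(0,T)$, which packages both of the paper's energy estimates into one continuity statement; and you prove, rather than assume, the Hadamard differentiability of the Nemytskii operator from the pointwise hypotheses ($g\in C^1$, $g'\in L^\infty$), via the integral mean-value identity, the subsequence trick for a.e.\ convergence of $sh(s)\to0$, and a generalized dominated convergence argument against the $L^1$-convergent sequence $h(s)^2\to h^2$ -- all of which is sound. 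Your version also treats moving directions $h(s)\to h$ from the outset, whereas the paper's computation is only for fixed $h$ (i.e.\ Gâteaux/directional differentiability), so that the upgrade to the Hadamard property asserted in the lemma is left implicit there (it would follow from the Lipschitz continuity of $\Phi$, since $g'\in L^\infty$ makes $g$ globally Lipschitz). What the paper's approach buys is brevity and modularity (the differentiability of $g$ is an assumption that can be verified separately); what yours buys is a self-contained verification of that assumption and an explicitly Hadamard statement.
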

\begin{proof}
Take $\psi, h \in L^2(0,T;H)$ and denote $w_s := \Phi(\psi + sh)$ and $w:=\Phi(\psi)$. Their difference satisfies
\begin{align*}
\left(\frac{w_s-w}{s}\right)' + B\left(\frac{w_s-w}{s}\right) &= \frac{g(\psi+sh)-g(\psi)}{s},\\
(w_s-w)(0) &= 0.
\end{align*}
Consider the solution $\eta$ of the PDE
\begin{align*}
\eta' + B\eta &= g'(\psi)(h),\\
\eta(0) &= 0.
\end{align*}
Letting $(w_s-w)\slash s =: \eta_s$ we observe
\begin{align*}
(\eta_s-\eta)' + B(\eta_s-\eta) = \frac{g(\psi+sh)-g(\psi)}{s} - g'(\psi)(h),
\end{align*}
whence, through standard energy estimates,
\[\norm{\eta_s-\eta}{L^\infty(0,T;H)} + \norm{\eta_s-\eta}{L^2(0,T;V)} \leq C\norm{\frac{g(\psi+sh)-g(\psi)}{s} - g'(\psi)(h)}{L^2(0,T;H)},\]
which implies that $\eta_s \to \eta$ in $L^2(0,T;V) \cap L^\infty(0,T;H)$ due to the differentiability assumption on $g$. We also have
\begin{align*}
\norm{\eta_s'-\eta'}{L^2(0,T;V^*)} &\leq C\norm{\frac{g(\psi+sh)-g(\psi)}{s} - g'(\psi)(h)}{L^2(0,T;V^*)} + CC_b^B\norm{\eta_s-\eta}{L^2(0,T;V)},
\end{align*}
and hence $\eta_s \to \eta$ in $W(0,T)$ showing that $\Phi\colon L^2(0,T;H) \to W(0,T)$ is differentiable as desired with  $\Phi'(\psi)(h) =  \Phi_0(g'(\psi)(h))$ being the solution of the same PDE with source term $g'(\psi)(h)$ and zero initial data. Along the same lines as the previous arguments, $\Phi'(\psi)(h) \in W_s(0,T)$.
\end{proof}
Given a source term $f \in L^2(0,T;H_+)$ and initial condition $u_0 \in V_+$ satisfying $u_0 \leq \Phi(0)(0) = w_0$ and \eqref{ass:NEWASSconditionOnIDPerturbed}, we fix a solution $u$ solving the QVI 
\begin{equation}\label{eq:egQVI}
\begin{aligned}
\text{Find $u \in W(0,T)$ with $u \leq \Phi(u)$} :  \int_0^T \langle u'(t) + Au(t) - f(t), u(t) - v(t) \rangle &\leq 0\\
&\!\!\!\!\!\!\!\!\!\!\!\!\!\!\!\!\!\!\!\!\!\!\!\!\!\!\!\!\!\!\!\!\!\!\!\!\!\!\!\!\!\!\!\!\!\!\!\!\!\!\!\!\!\!\!\!\!\!\!\!\!\! \forall v \in L^2(0,T;V) : v \leq \Phi(u),\\
u(0) &= u_0.
\end{aligned}
\end{equation}
The fact that such a solution indeed exists will be shown later.
\begin{lem}\label{lem:egLem1}Let $d \in L^2(0,T;H_+)$ be such that $f+sd$ is increasing in time and let $w_0 \in V_+$ and $Bw_0 \leq 0$. Then Assumption \ref{ass:newAss} holds.
\end{lem}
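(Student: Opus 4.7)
The plan is to verify, one by one, each component of Assumption \ref{ass:newAss} under its first set of alternative conditions, which is the appropriate choice since the hypothesis $d \geq 0$ places us in the ``subsolution'' regime. The strategy splits naturally into (i) trivial or already-established items, (ii) structural conditions on $\Phi$ (non-negativity, time-monotonicity, range $W_s \to W_{\mathrm r} \cap C^0([0,T];V)$), (iii) continuity and differentiability conditions, which will all be reduced to linear parabolic estimates for the PDE \eqref{eq:exampleEqn}.

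First I would dispose of the easy items: \eqref{ass:signOnTheDirection} is $d \geq 0$; \eqref{ass:perturbedDataNonnegativeAndInc} and \eqref{ass:NEWASSconditionOnIDPerturbed} hold by the standing hypotheses on $f$, $d$, $u_0$ together with the hypothesis that $f+sd$ is increasing; \eqref{ass:NEWASSFOR} is vacuous as $d \geq 0$; and \eqref{ass:PhiNonNegativeForNonNegativeObs} (i.e.\ $\Phi(0) \geq 0$) follows from $g \geq 0$, $w_0 \geq 0$ and T-monotonicity of $B$. For \eqref{ass:inWImpliesLPhiInL2H} I would simply collect the regularity argument already given immediately before the lemma: for $\psi \in L^2(0,T;H)$, standard parabolic theory yields $\Phi(\psi) \in W_s(0,T)$ and $B\Phi(\psi) \in L^2(0,T;H)$; the norm-equivalence hypothesis and the range assumption on $A$ upgrade this to $A\Phi(\psi) \in L^2(0,T;H)$, so $L\Phi(\psi) \in L^2(0,T;H)$, and Aubin--Lions gives $\Phi(\psi) \in C^0([0,T];V)$. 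Condition \eqref{ass:L2} follows since $u \leq \Phi(u)$ and $u_0 \geq 0$ together with the sign properties of $g$ force $u \geq 0$, whence $\Phi(u) \geq 0$.

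For \eqref{ass:PhiIsIncreasingInTimeForAllNonNegBochner} I would use a time-translate comparison argument: for $w \in L^2(0,T;V_+)$ increasing in time, the translates $\tau_h w(t) := w(t+h)$ satisfy $\tau_h w \geq w$, hence $g(\tau_h w) \geq g(w)$ since $g$ is increasing, while the initial comparison $\Phi(w)(h) \geq w_0 \geq \Phi(w)(0)$ holds because $\partial_t\Phi(w)|_{t=0} = g(w(0)) - Bw_0 \geq 0$ by the sign assumption $Bw_0 \leq 0$. A comparison principle for the linear parabolic equation \eqref{eq:exampleEqn} (built from T-monotonicity of $B$) then yields $\Phi(w)(\cdot + h) \geq \Phi(w)$, i.e., $t \mapsto \Phi(w)(t)$ is increasing. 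The complete continuity \eqref{ass:completeContinuityOfPhi} is handled as follows: a weakly-convergent sequence $w_n \weaklyto w$ in $L^2(0,T;V)$ and weakly-$*$ in $L^\infty(0,T;H)$ converges strongly in $L^2(0,T;H)$ by Aubin--Lions; since $g'$ is bounded, $g$ is globally Lipschitz on $L^2(0,T;H)$, so $g(w_n) \to g(w)$ in $L^2(0,T;H)$, and linear continuous-dependence estimates for \eqref{eq:exampleEqn} transfer this to convergence $\Phi(w_n) \to \Phi(w)$ in $L^2(0,T;V)$.

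For the differentiability conditions \eqref{ass:inL2VLinftyHimples} and \eqref{ass:LFour} I would exploit the formula $\Phi'(u)(h) = \Phi_0(g'(u)h)$ from the preceding lemma. Given $h$ in $W_{\mathrm r}(0,T) + L^2(0,T;V)\cap L^\infty(0,T;H) \subset L^2(0,T;H)$, the boundedness of $g'$ gives $g'(u)h \in L^2(0,T;H)$; the same bootstrap as in step (ii) then shows $\Phi_0(g'(u)h) \in W_{\mathrm r}(0,T)$, which is \eqref{ass:inL2VLinftyHimples}. For \eqref{ass:LFour}, the key estimate
\[
\norm{\Phi'(z)h(s)}{L^p(0,T;H)} + \norm{L\Phi'(z)h(s)}{L^2(0,T;V^*)} \leq C\,\norm{g'}{L^\infty}\,\norm{h(s)}{L^2(0,T;H)},
\]
with $C$ independent of the base point $z$, follows from standard linear parabolic energy estimates applied to $\Phi_0(g'(z)h(s))$; since $h(s)$ is a higher-order term in $L^2(0,T;H)$, dividing by $s$ and letting $s \to 0^+$ yields both required limits.

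The main obstacle I anticipate is the time-monotonicity property \eqref{ass:PhiIsIncreasingInTimeForAllNonNegBochner}: the sign assumption $Bw_0 \leq 0$ is essential precisely there, because $\partial_t \Phi(w)(0) = g(w(0)) - Bw_0$ must be non-negative in order for the time-translate comparison to close, and a weakening of this hypothesis would break the argument. Once this is settled, every remaining condition is either a direct consequence of the standing hypotheses on $g$, $B$, $A$, $w_0$ or a routine linear parabolic estimate.
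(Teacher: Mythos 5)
Your proposal is correct in substance and runs through the same checklist as the paper; the only place where you genuinely diverge is the time-monotonicity condition \eqref{ass:PhiIsIncreasingInTimeForAllNonNegBochner}. The paper proves it by subtracting $w_0$, writing the solution via the explicit (sine-series) Green's function $G\geq 0$ and comparing $\bar w(t_1,x)$ with $\bar w(t_2,x)$ after a change of variables; your time-translate comparison ($\tau_h w \geq w \Rightarrow g(\tau_h w)\geq g(w)$, then a parabolic comparison principle built from T-monotonicity of $B$) is arguably cleaner, since it does not rely on positivity of an explicit kernel — which in the paper is really only written down for the Dirichlet Laplacian, whereas $B$ is a general T-monotone operator. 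However, one step of your version needs repair: the base-case inequality $\Phi(w)(h)\geq w_0$ does not follow from $\partial_t\Phi(w)|_{t=0}=g(w(0))-Bw_0\geq 0$ alone (a non-negative derivative at a single instant says nothing about $t=h>0$). The correct argument is a separate comparison of $\Phi(w)$ with the constant function $w_0$: the difference $\zeta:=\Phi(w)-w_0$ solves $\zeta'+B\zeta=g(w)-Bw_0\geq 0$ with $\zeta(0)=0$, and testing with $\zeta^-$ gives $\zeta\geq 0$, i.e.\ $\Phi(w)(t)\geq w_0$ for all $t$; this is exactly where $g\geq 0$ and $Bw_0\leq 0$ enter. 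With that fix the translate argument closes. The remaining items (non-negativity by testing with $w^-$, monotonicity of $\Phi$ in its argument by testing with $(w_1-w_2)^+$, complete continuity via the Lipschitz bound on $g$ and linear continuous dependence, \eqref{ass:inL2VLinftyHimples} and \eqref{ass:LFour} via $\Phi'(u)(h)=\Phi_0(g'(u)h)$ and energy estimates with constants controlled by $\norm{g'}{\infty}$) match the paper's treatment; note only that the paper defers the verification of \eqref{ass:LFour} to the subsequent lemma on the local hypotheses, whereas you prove it in place, which is fine.
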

\begin{proof}We showed in the previous paragraph the satisfaction of \eqref{ass:inWImpliesLPhiInL2H}, and the condition \eqref{ass:NEWASSFOR} is irrelevant since $d \geq 0$. Let us check the remaining assumptions.
$ $\newline
\vspace{-.5cm}
\begin{itemize}[leftmargin=*]
\item Regarding the increasing property of $\Phi$ in time, we argue as follows. Making the substitution $\bar w = w- w_0$ in \eqref{eq:exampleEqn} in order to remove the initial condition, the transformed PDE is
\begin{equation*}
\begin{aligned}
\bar w' + B\bar w &= g(\psi) -Bw_0&&\text{on $(0,T) \times (0,\omega)$},\\
\bar w(\cdot,0) &= \bar w(\cdot,\omega) = 0&&\text{on $(0,T)$},\\
\bar w(0,\cdot) &= 0 &&\text{on $(0,\omega)$}.
\end{aligned}
\end{equation*}
The solution of this can be written in terms of the Green's function
\[G(x,y,t) := \frac{2}{L}\sum_{n=1}^\infty \sin\left(\frac{n\pi x}{L}\right)\sin\left(\frac{n\pi y}{L}\right)e^{-\frac{n^2\pi^2t}{L^2}}\]
through the integral representation
\[\bar w(t,x) := 
\int_0^t \int_0^L (g(\psi(s,y))-Bw_0(y))G(x,y,t-s)\;\mathrm{d}y\mathrm{d}s.\]
Take $\psi$ to be increasing in time. For $t_1 \leq t_2$,
\begin{align*}
&\bar w(t_1,x) - \bar w(t_2,x)\\
&= \int_0^{t_1} \int_0^L (g(\psi(s,y))-Bw_0(y))G(x,y,t_1-s)\;\mathrm{d}y\mathrm{d}s\\
&\quad - \int_0^{t_2} \int_0^L (g(\psi(s,y))-Bw_0(y))G(x,y,t_2-s)\;\mathrm{d}y\mathrm{d}s\\
&= \int_0^{t_1} \int_0^L (g(\psi(t_1-r,y))-Bw_0(y))G(x,y,r)\;\mathrm{d}y\mathrm{d}r \\
&\quad- \int_0^{t_2} \int_0^L (g(\psi(t_2-r,y))-Bw_0(y))G(x,y,r)\;\mathrm{d}y\mathrm{d}r\tag{where $r:=t_i -s$}\\
&= \int_0^{t_1} \int_0^L (g(\psi(t_1-r,y))-g(\psi(t_2-r,y))G(x,y,r)\;\mathrm{d}y\mathrm{d}r\\
&\quad- \int_{t_1}^{t_2} \int_0^L (g(\psi(t_2-r,y))-Bw_0(y))G(x,y,r)\;\mathrm{d}y\mathrm{d}r\\
&\leq 0
\end{align*}
since $\psi$ is increasing in time and $g$ is increasing which implies that the first term is less than zero, and the second term is also clearly non-positive as its integrand is non-negative (due to the assumption on $w_0$). This shows that $t \mapsto \bar w(t) = w(t) - w_0 = \Phi(\psi)(t) - w_0$ is increasing, implying \eqref{ass:PhiIsIncreasingInTimeForAllNonNegBochner}.

\item If $\psi_1 \leq \psi_2$ and $w_i:=\Phi(\psi_i)$, the difference solves
\[(w_1-w_2)' + B(w_1-w_2) = g(\psi_1)-g(\psi_2),\]
with zero initial data. Testing with $(w_1-w_2)^+$ and using the fact that $g$ is increasing, the right-hand side is non-positive, showing that $\Phi(\psi_1) \leq \Phi(\psi_2)$.

\item Multiplying the equation by $w^-$ leads to
\[\frac 12 \frac{d}{dt} \norm{w^-}{H}^2 + C_a^B\norm{w^-}{V}^2 \leq (g(\psi), -w^-)_H \leq 0,\]
since $g \geq 0$, which shows that $\Phi(\psi)$ is non-negative too as long as $w_0 \in V_+$, thus \eqref{ass:PhiNonNegativeForNonNegativeObs} and \eqref{ass:L2}. 

\item Let $\psi_n \weaklyto \psi$ in $L^2(0,T;V)$ and set $w_n := \Phi(\psi_n)$ and $w:= \Phi(\psi)$ so that
\begin{align*}
w_n' + Bw_n = g(\psi_n) \quad \text{and} \quad w' + Bw = g(\psi).
\end{align*}
Subtracting one from the other and testing with the difference, we obtain
\[\frac 12 \frac{d}{dt} \norm{w_n-w}{H}^2 + C_a^B\norm{w_n-w}{V}^2 \leq \norm{g(\psi_n)-g(\psi)}{H}\norm{w_n-w}{H}.\]
Making use of Young's inequality on the right-hand side and then integrating over time, we find due to the continuity of $g$ in $L^2(0,T;H)$ that $w_n \to w$ in $L^2(0,T;V) \cap L^\infty(0,T;H)$ and so \eqref{ass:completeContinuityOfPhi} is also valid. 

\item If $h,\psi  \in L^2(0,T;V)$, $\Phi'(\psi)(h) = \Phi_0(g'(\psi)(h)) \in W_s(0,T)$ and by the same reasoning as above, we find 
${L}\Phi'(\psi)(h) \in L^2(0,T;H)$, giving \eqref{ass:inL2VLinftyHimples}. The property \eqref{ass:LFour} will be shown below.
\end{itemize}
\vspace{-0.3cm}
\end{proof}
Let us return now to the QVI \eqref{eq:egQVI}.
\begin{lem}A solution $u$ of \eqref{eq:egQVI} exists with all the stated properties. Furthermore, the assumptions on $u$ in Assumption \ref{ass:onU} also hold.
\end{lem}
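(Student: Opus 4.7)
The plan is to apply Theorem \ref{thm:approximationOfQVIByVIIterates} with the choice $z_{\mathrm{sub}} \equiv 0$ (under the first set of assumptions, i.e., the $W_s(0,T)$ regularity case), and then read off the remaining property $t \mapsto \Phi(u)(t)$ increasing from the monotonicity already established in Lemma \ref{lem:egLem1}.

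First I would check that $0$ is a subsolution: by Proposition \ref{prop:existenceForParabolicVIObstacle} applied with obstacle $\Phi(0)$, which lies in $C^0([0,T];V)$ and is non-negative (since $w_0 \in V_+$ and $g(0) \geq 0$), the function $S(f,0) = \sigma_{u_0}(f,\Phi(0))$ is well defined, non-negative, and greater than $0$. The feasibility $u_0 \leq \Phi(0)(0) = w_0$ is part of the hypothesis, as is \eqref{ass:dataNonnegativeAndfInc} and \eqref{ass:NEWASSPHIPSI}. The remaining items \eqref{ass:newIncreasingPropertyOfSub}, \eqref{ass:PhiAtSubsolnIsCts}, \eqref{ass:PhiNonNegativeForNonNegativeObs}, \eqref{ass:newIncreasingProperty2}, \eqref{ass:PhiTakesWsIntoCts} and \eqref{ass:completeContinuityOfPhi} are either trivial for $z_{\mathrm{sub}} \equiv 0$ or have been verified in Lemma \ref{lem:egLem1} and in the paragraph preceding it (the range statements $\Phi\colon L^2(0,T;H) \to W_s(0,T)$ and the Aubin--Lions embedding $W_s(0,T) \cts C^0([0,T];V)$ giving \eqref{ass:PhiTakesWsIntoCts}).

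Having all the hypotheses of Theorem \ref{thm:approximationOfQVIByVIIterates} (first case) in hand, the theorem produces an iterative sequence $z^n := S_{u_0}(f,z^{n-1})$ with $z^0 = 0$, lying in $W_s(0,T)$, monotonically increasing in $n$, non-negative, with each $z^n$ increasing in $t$, and converging weakly in $W_s(0,T)$ to a solution $u \in \mathbf{P}_{u_0}(f) \cap W_s(0,T) \subset \mathbf{P}_{u_0}(f) \cap W(0,T)$ which is non-negative. Passing to the limit preserves the increasing-in-time property of the $z^n$ (the argument being identical to the one at the end of the proof of Theorem \ref{thm:discretisationExistence}), so $t \mapsto u(t)$ is increasing.

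The final step is then immediate: since $u \geq 0$ and $t \mapsto u(t)$ is increasing, assumption \eqref{ass:PhiIsIncreasingInTimeForAllNonNegBochner} established in Lemma \ref{lem:egLem1} yields $t \mapsto \Phi(u)(t)$ increasing, which together with $u_0 \in V_+$ and $u \in \mathbf{P}_{u_0}(f)\cap W(0,T)$ completes the verification of Assumption \ref{ass:onU}. There is no real obstacle here beyond careful bookkeeping of the hypothesis list; all the analytic work has been packaged into Lemma \ref{lem:egLem1} and the parabolic regularity statement $\Phi\colon L^2(0,T;H) \to W_s(0,T) \cap L^2(0,T;W)$ derived earlier in the section.
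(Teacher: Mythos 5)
Your proposal is correct and follows essentially the same route as the paper: take $0$ as a subsolution, verify the first set of hypotheses of Theorem \ref{thm:approximationOfQVIByVIIterates} using Lemma \ref{lem:egLem1} and the regularity $\Phi\colon L^2(0,T;H)\to W_s(0,T)\cap L^2(0,T;W)$, and read off Assumption \ref{ass:onU} from the increasing-in-time property. One small imprecision: the continuity $\Phi(\psi)\in C^0([0,T];V)$ comes from Aubin--Lions applied to $L^2(0,T;W)\cap H^1(0,T;H)$, not from an embedding of $W_s(0,T)$ itself into $C^0([0,T];V)$ (which does not hold).
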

\begin{proof}
We aim to apply Theorem \ref{thm:approximationOfQVIByVIIterates}. Indeed, taking the function $0$ as a subsolution (by the comparison principle),  we showed above that $\Phi(\psi) \in C^0([0,T];V)$ for any $\psi \in L^2(0,T;H)$ which ensures \eqref{ass:PhiAtSubsolnIsCts} and \eqref{ass:PhiTakesWsIntoCts}. We also proved in the same lemma the validity of \eqref{ass:PhiNonNegativeForNonNegativeObs} and \eqref{ass:completeContinuityOfPhi} as well as the increasing-in-time properties \eqref{ass:newIncreasingPropertyOfSub}, \eqref{ass:newIncreasingProperty2} for $\Phi$, hence there is a $u \in W_s(0,T)$ solving the above QVI and Assumption \ref{ass:onU} is met.
\end{proof}
\begin{lem}
The local hypotheses comprising Assumptions \ref{ass:forBoundednessOfAlphas} and \ref{ass:requiredLocalAssumptions} hold.
\end{lem}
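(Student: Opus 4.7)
My plan is to reduce every hypothesis in Assumptions \ref{ass:forBoundednessOfAlphas} and \ref{ass:requiredLocalAssumptions} to standard parabolic energy estimates for the linear Cauchy problem
\begin{equation*}
\eta' + B\eta = g'(z)v, \qquad \eta(0) = 0,
\end{equation*}
which is satisfied by $\eta := \Phi'(z)v = \Phi_0(g'(z)v)$, the formula for $\Phi'$ extending without change from the base point $u$ to any $z$ nearby. The structural feature that drives everything is $g' \in L^\infty(\mathbb R)$, giving $|g'(z)v| \le \|g'\|_{L^\infty}|v|$ pointwise, uniformly in $z$; this yields uniform bounds on the forcing in any Bochner space, which is exactly what the local hypotheses of the form \eqref{ass:big0}--\eqref{ass:bigB} demand and, moreover, lets one take the parameter $\epsilon$ in Assumption \ref{ass:requiredLocalAssumptions} arbitrarily large.

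Taking $p=2$ in Assumption \ref{ass:newAss}, I would first derive the quantitative bounds \eqref{ass:lb1}--\eqref{eq:boundednessOfAlphaDerivativeAtT} and \eqref{ass:big0}--\eqref{ass:bigB}. Testing the $\eta$-equation with $\eta$ itself, using coercivity of $B$ and Young's inequality gives, for every $t \in [0,T]$,
\begin{equation*}
\tfrac12 \|\eta(t)\|_H^2 + C_a^B\|\eta\|_{L^2(0,t;V)}^2 \le C\|g'\|_{L^\infty}^2 \|v\|_{L^2(0,T;H)}^2,
\end{equation*}
which delivers \eqref{ass:lb1}, \eqref{ass:bigA}, the terminal bound \eqref{eq:boundednessOfAlphaDerivativeAtT} (evaluate at $t=T$) and, via $L^\infty(0,T;H) \hookrightarrow L^2(0,T;H)$, also \eqref{ass:big0}. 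Reading $\partial_t\eta = -B\eta + g'(z)v$ directly off the equation and estimating each summand in $L^2(0,T;V^*)$ yields \eqref{ass:lb2} and \eqref{ass:bigB}. All constants depend only on $C_a^B$, $C_b^B$, $T$ and $\|g'\|_{L^\infty}$, with no dependence on $z$.

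For the complete-continuity conditions \eqref{ass:compContOfDerivOfPhi} and \eqref{ass:strongCCofDerivative} I would combine parabolic smoothing with the equivalence-of-norms assumption of the example. Given $\{v_n\}$ bounded in $L^2(0,T;V)$, the forcing $g'(u)v_n$ is bounded in $L^2(0,T;H)$, so maximal regularity places $\eta_n := \Phi'(u)v_n$ in $W_s(0,T)$ uniformly; the identity $B\eta_n = g'(u)v_n - \eta_n'$ combined with the equivalence of norms then upgrades this to a uniform bound in $L^2(0,T;W) \cap H^1(0,T;H)$. Aubin--Lions with $W \ctsCompact V \cts H$ extracts a subsequence with $\eta_n \to \eta$ in $L^2(0,T;V)$, and identifying the limit $\eta = \Phi'(u)v$ by passing to the limit in the equation yields \eqref{ass:compContOfDerivOfPhi}. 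For \eqref{ass:strongCCofDerivative}, writing $L(\eta_n - \eta) = (A-B)(\eta_n - \eta) + g'(u)(v_n - v)$, the first summand converges strongly in $L^2(0,T;V^*)$ from the above, and to upgrade both terms to $L^2(0,T;H)$ one would bootstrap the difference equation against $B(\eta_n-\eta)$ and apply the equivalence of norms once more.

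The plan has two genuine obstacles. First, the smallness inequalities \eqref{ass:smallnessCondition} and \eqref{ass:smallnessOfDerivOfPhi} are not automatic: every constant above scales like $\|g'\|_{L^\infty(\mathbb R)}$ with at most polynomial dependence on $T$, so the lemma implicitly requires a supplementary assumption of the form $\mathrm{const}(C_a,C_b,C_a^B,C_b^B)\,T^{1/2}\|g'\|_{L^\infty(\mathbb R)} < 1$, which I would state explicitly. Second, and more seriously, the strong complete continuity \eqref{ass:strongCCofDerivative} is delicate because $v_n \weaklyto v$ in $L^2(0,T;V)$ does not in itself yield strong convergence of $g'(u)v_n$ in $L^2(0,T;H)$ (the compactness $V \ctsCompact H$ being purely spatial with no time-derivative bound on $\{v_n\}$), so one must manoeuvre the missing strong convergence out of the time regularity of $\eta_n$ via the PDE. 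Once both points are handled, the lemma is in place and Theorem \ref{thm:directionalDifferentiability} delivers the directional differentiability of the source-to-solution map $\mathbf{P}$ for this thermoforming-inspired example.
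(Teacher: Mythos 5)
Your reduction to the linear problem $\eta'+B\eta=g'(z)v$, $\eta(0)=0$, and the way you extract \eqref{ass:lb1}, \eqref{ass:lb2}, \eqref{eq:boundednessOfAlphaDerivativeAtT}, \eqref{ass:big0}--\eqref{ass:bigB} and \eqref{ass:LFour} from the energy estimate and from reading $\partial_t\eta$ off the equation is exactly the paper's argument, including the observation that $g'\in L^\infty(\mathbb{R})$ makes all constants independent of the base point $z$, so the radius $\epsilon$ plays no role. Your first ``obstacle'' is not a divergence from the paper either: the paper itself concludes \eqref{ass:smallnessCondition} and \eqref{ass:smallnessOfDerivOfPhi} only ``if $T$ and/or $\norm{g'}{\infty}$ is sufficiently small'', so making that smallness hypothesis explicit is a presentational improvement rather than a correction. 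Where you genuinely deviate is \eqref{ass:compContOfDerivOfPhi}: the paper tests the difference equation with $w_n-w$ and invokes the ``strong convergence $h_n\to h$ in $L^2(0,T;H)$'', which does not follow from $h_n\weaklyto h$ in $L^2(0,T;V)$ alone; your Aubin--Lions argument on $\eta_n$ (bounded in $L^2(0,T;W)\cap H^1(0,T;H)$ via the equation and the equivalence of norms, hence precompact in $L^2(0,T;V)$, with the limit identified through the weak form) is the more robust route and delivers complete continuity against genuinely weak input convergence.

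The gap is \eqref{ass:strongCCofDerivative}. Writing $L\Phi'(u)(v_n)-L\Phi'(u)(v)=g'(u)(v_n-v)+(A-B)(\eta_n-\eta)$, the second summand can be handled by your compactness and bootstrap scheme, but the first cannot: $g'(u)(v_n-v)$ sits additively and irreducibly inside $L\eta_n$ and gains no regularity from the PDE, so no manipulation of $\eta_n$ will make it converge strongly in $L^2(0,T;H)$ when $v_n\weaklyto v$ only weakly (take $v_n=v+\sin(nt)\phi$ with $g'(u)\phi\neq 0$). Your proposed bootstrap against $B(\eta_n-\eta)$ reproduces the same forcing term on the right-hand side and therefore does not escape it. What rescues the application is that the sequences to which \eqref{ass:strongCCofDerivative} is applied in \S\ref{sec:DDMain} are the monotone, $L^2(0,T;V)$-bounded iterates $\alpha^n$, which do converge strongly in $L^2(0,T;H)$; the paper's verification tacitly uses exactly this strong $L^2(0,T;H)$ convergence of the inputs (the same unstated step as in its proof of \eqref{ass:compContOfDerivOfPhi}). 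So you should either restrict the complete-continuity claim to sequences that additionally converge strongly in $L^2(0,T;H)$ --- which suffices for the theorem --- or accept that the property as literally stated fails for this example; as written, your sketch does not close this point.
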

\begin{proof}
Let $h \in L^2(0,T;V)$ and $\eta := \Phi'(u)(h)$ so that
\begin{align*}
\eta' + B\eta&= g'(u)h,\\
\eta(0) &= 0.
\end{align*}
\begin{itemize}[leftmargin=*]
\item From the standard energy estimate
\begin{align*}
\frac 12 \norm{\eta(r)}{H}^2 + C_a^B\int_0^r \norm{\eta}{V}^2 &\leq \int_0^r (g'(u)h, \eta)_H
\leq \norm{g'}{\infty}\norm{\eta}{L^\infty(0,r;H)}\norm{\eta}{L^1(0,r;H)},
\end{align*}
we find
\begin{align*}
\frac 12 \norm{\eta}{L^\infty(0,T;H)}^2 + C_a^B \norm{\eta}{L^2(0,T;V)}^2 
&\leq \norm{g'}{\infty}\norm{\eta}{L^\infty(0,T;H)}\norm{\eta}{L^1(0,T;H)},
\end{align*}
which leads to
\begin{equation}\label{eg3}
\norm{\Phi'(u)(h)}{L^2(0,T;V)}^2 \leq \frac{T\norm{g'}{\infty}^2}{2C_a^B}\norm{h}{L^2(0,T;H)}^2,
\end{equation}
i.e., \eqref{ass:lb1} after using the continuity of the embedding $V \cts H$. 
\item The second estimate above also leads to 
\begin{align*}
\left(\frac 12-\epsilon\right) \norm{\eta}{L^\infty(0,T;H)}^2 + C_a^B \norm{\eta}{L^2(0,T;V)}^2 
&\leq \norm{g'}{\infty}^2C_\epsilon\norm{h}{L^1(0,T;H)}^2\\
&\leq \norm{g'}{\infty}^2C_\epsilon T \norm{h}{L^2(0,T;H)}^2,
\end{align*}
whence \eqref{eq:boundednessOfAlphaDerivativeAtT}:
\begin{equation}\label{eg4}
\norm{\Phi'(u)(h)}{L^\infty(0,T;H)} \leq \norm{g'}{\infty}\sqrt{\frac{TC_\epsilon}{1\slash 2-\epsilon}}\norm{h}{L^2(0,T;H)}.
\end{equation}
\item For \eqref{ass:lb2}, we simply use the equation itself and \eqref{eg3}:
\begin{align}
\nonumber \norm{\partial_t \Phi'(u)(h)}{L^2(0,T;V^*)} &\leq \norm{g'}{\infty}\norm{h}{L^2(0,T;V^*)} + C_b^B\norm{\Phi'(u)(h)}{L^2(0,T;V)}\\
&\leq \norm{g'}{\infty}\left(1+C_b^B\sqrt{\frac{T}{2C_a^B}}\right)\norm{h}{L^2(0,T;H)}\label{eg5}.
\end{align}
Therefore, if $T$ and/or $\norm{g'}{\infty}$ is sufficiently small, assumption \eqref{ass:smallnessCondition} holds.
\item The estimate \eqref{eg3} yields the first property in \eqref{ass:LFour}  whilst \eqref{eg3} and \eqref{eg5} yield the second property.

\item Regarding the completely continuity requirements of Assumption \ref{ass:requiredLocalAssumptions},  let us take $h_n \weaklyto h$ in $L^2(0,T;V)$ and define $w_n := \Phi'(u)(h_n) = \Phi_0(g'(u)(h_n))$ and $w:=\Phi'(u)(h) = \Phi_0(g'(u)h)$. We see that
\begin{align*}
w_n' + Bw_n = g'(u)h_n \quad\text{and}\quad w' + Bw = g'(u)h,
\end{align*}
which immediately implies 
\begin{align*}
C_a\norm{w_n-w}{L^2(0,T;V)}^2 \leq \int_0^T (g'(u)h_n - g'(u)h, w_n-w)_H,
\end{align*}
on which using the boundedness of $g'$ and the strong convergence $h_n \to h$ in $L^2(0,T;H)$, we get that $w_n \to w$ in $L^2(0,T;V)$, giving \eqref{ass:compContOfDerivOfPhi}. 

\item We also have that ${L}\Phi'(u)(h_n) = (\partial_t + A)\Phi_0(g'(u)h_n) = (\partial_t + A)w_n = (\partial_t + B)w_n + Aw_n - Bw_n$, which implies that
\begin{align}
\nonumber &\norm{{L}\Phi'(u)(h_n)-{L}\Phi'(u)(h)}{L^2(0,T;H)}\\
\nonumber &\quad= \norm{g'(u)h_n - g'(u)h + (A-B)(w_n-w)}{L^2(0,T;H)}\\
\nonumber &\quad\leq \norm{g'(u)(h_n - h)}{L^2(0,T;H)} + \norm{A(w_n-w)}{L^2(0,T;H)}\\
&\quad\quad+ \norm{B(w_n-w)}{L^2(0,T;H)}.\label{eq:eg4}
\end{align}
The first term on the right-hand side converges to zero because $g'$ is bounded. For the third term, we note  the following standard estimate for linear parabolic PDEs (using the differentiability of $B$):
\begin{align*}
\norm{w_n-w}{W_s(0,T)} \leq C\norm{g'(u)(h_n-h)}{L^2(0,T;H)}
\end{align*}
whence
\begin{align*}
\norm{Bw_n - Bw}{L^2(0,T;H)} &\leq \norm{g'(u)(h_n - h)}{L^2(0,T;H)} + \norm{w_n'-w'}{L^2(0,T;H)}\\
&\leq C\norm{g'(u)(h_n - h)}{L^2(0,T;H)}.
\end{align*}
Regarding the second term of \eqref{eq:eg4}, we manipulate using the equivalence of norms as following:
\begin{align*}
\norm{A(w_n-w)}{L^2(0,T;H)} &\leq C_1\norm{w_n-w}{L^2(0,T;W)}\\
&\leq C_2\left(\norm{w_n-w}{L^2(0,T;H)} + \norm{B(w_n-w)}{L^2(0,T;H)}\right)
\end{align*}
and we apply again the estimate from the previous step on the right-hand side and this eventually leads to \eqref{ass:strongCCofDerivative}.

\item The bounds \eqref{eg3}, \eqref{eg4} and \eqref{eg5} imply \eqref{ass:big0}, \eqref{ass:bigA}, \eqref{ass:bigB} and the smallness condition \eqref{ass:smallnessOfDerivOfPhi} holds again if $T$ or $\norm{g'}{\infty}$ is sufficiently small.
\end{itemize}
\end{proof}
Having met all the requirements, we may apply Theorem \ref{thm:directionalDifferentiability} to infer that the solution mapping taking $f \mapsto u$ in \eqref{eq:egQVI} is directionally differentiable in the stated sense and the associated derivative solves the QVI
\begin{align*}
&\alpha-\Phi_0(g'(u)(\alpha)) \in C_{\mathbb{K}_0}(\Phi(u)-u) \cap [u' + Au - f]^\perp :\\
& \qquad \qquad \int_0^T  \langle w' + A\alpha - d, \alpha-w \rangle  \leq \frac 12 \norm{w(0)}{H}^2\\
&\qquad \qquad \qquad\forall w : w -\Phi_0(g'(u)(\alpha)) \in \cl_{W}(T_{\mathbb{K}_0}^{\mathrm{rad}}(\Phi(u)-u)\cap [u'+Au-f]^\perp).
\end{align*}
\section*{Acknowledgements}
This research was carried out in the framework of {\sc MATHEON} supported by the Einstein Foundation Berlin within the
ECMath projects OT1, SE5 and SE19 and is funded by the Deutsche Forschungsgemeinschaft (DFG, German Research Foundation) under Germany's Excellence Strategy – The Berlin Mathematics Research Center MATH$^+$ (EXC-2046/1, project ID: 390685689), and the {\sc MATHEON} project A-AP24. The authors further acknowledge the
support of the DFG through the DFG--SPP 1962: Priority Programme `Non-smooth and Complementarity-based
Distributed Parameter Systems: Simulation and Hierarchical Optimization' within Projects 10 and 11.
\bibliographystyle{abbrv}
\bibliography{QVIPaper}
\end{document}